\definecolor{blue}{rgb}{0,0,0.7}
\definecolor{red}{rgb}{0.75, 0, 0}
\definecolor{midnight}{rgb}{0.0,0.2,0.4}
\numberwithin{equation}{section}
\definecolor{blue}{rgb}{0,0,0.7}
\definecolor{red}{rgb}{0.75, 0, 0}
\newtheorem{theorem}{Theorem}[section]
\newtheorem*{theorem*}{Theorem}
\newtheorem{lemma}[theorem]{Lemma}
\newtheorem{proposition}[theorem]{Proposition}
\newtheorem{corollary}[theorem]{Corollary}
\newtheorem{conjecture}[theorem]{Conjecture}
\newtheorem{remark}[theorem]{Remark}
\newcommand{\be}{\begin{equation}}
\newcommand{\bs}{\begin{split}}
\newcommand{\ee}{\end{equation}}
\newcommand{\es}{\end{split}}
\newcommand{\lra}{\longrightarrow}
\def\F{\mathrm{F}}
\def\P{\mathrm{P}}
\def\L{\mathcal{L}}
\def\C{\mathbb{C}}
\def\Q{\mathbb{Q}}
\def\Z{\mathbb{Z}}
\def\Li{\textup{Li}}
\DeclareMathOperator{\cyc}{cyc}
\newcommand\rsmraise[1]{%
  \ifx#1\displaystyle .8\else
    \ifx#1\textstyle .8\else
      \ifx#1\scriptstyle .6\else
        .45%
      \fi
    \fi
  \fi}
\newcommand\nobreakpar{\par\nobreak\@afterheading} 
\date{}
\title[]{Cluster Polylogarithms I:\\ Quadrangular Polylogarithms}
\author[]{Andrei Matveiakin}
\author[]{Daniil Rudenko}
\begin{document}

\begin{abstract} We suggest a definition of cluster polylogarithms on an arbitrary cluster variety and classify them in type $A$. We find functional equations for multiple polylogarithms which generalize equations discovered by Abel, Kummer, and Goncharov to an arbitrary weight. As an application, we prove a part of the Goncharov depth conjecture in weight six. 
\end{abstract}

\maketitle
\pagebreak 
\tableofcontents

\pagebreak 
\section{Introduction}
\subsection{Polylogarithms and mixed Tate motives} \label{SectionIntroductionPolylogarithms}
Multiple polylogarithms are certain multivalued functions of variables
$
a_1,\dots,a_k\in \C
$ 
depending on integer parameters $n_1,\dots,n_k\in \mathbb{N}.$ In the polydisc   $|a_1|,|a_2|,\dots, |a_k| <1$ a multiple polylogarithm can be defined by the power series
\[
\Li_{n_1,n_2,\dots, n_k}(a_1,a_2,\dots,a_k)=\sum_{m_1>m_2>\dots>m_k>0}\frac{a_1^{m_1} a_2^{m_2}\dots a_k^{m_k}}{m_1^{n_1}m_2^{n_2}\dots m_k^{n_k}}.
\]
The number $n=n_1+\dots+n_k$ is called the weight of the multiple polylogarithm, and the number $k$ is called its depth. In this form, Goncharov defined these functions in \cite{Gon95B}, but they appeared in different disguises long before that.

An explanation of the properties of multiple polylogarithms is provided by the theory of mixed Tate motives (see \cite{Gon95}); here is a brief summary. One defines a graded Lie coalgebra $\mathcal{L}_\bullet(\mathrm{F})$ of multiple polylogarithms with values in a field $\mathrm{F}$. It is generated by symbols $\textup{Li}_{n_0;n_1,n_2,\dots, n_k}^{\L}(a_1,a_2,\dots,a_k)$ for $a_1,\dots,a_k\in \mathrm{F}^\times$ which are subject to (unknown) functional equations for polylogarithms. The coproduct $\Delta \colon \mathcal{L}(\mathrm{F}) \lra \Lambda^2 \mathcal{L}(\mathrm{F})$ was defined by Goncharov based on the properties of the mixed Hodge structure related to polylogarithms. Lie coalgebra $\mathcal{L}(\F)$ is expected to coincide with the fundamental Lie coalgebra of the category of mixed Tate motives over $\F.$ Guided by that, one can explain the relation between polylogarithms and volumes of hyperbolic polytopes, special values of zeta functions, algebraic K-theory, and motivic cohomology. 

The conjectures of Goncharov predict  the existence of various formulas involving polylogarithms: functional equations, formulas for Chern classes, and depth reduction formulas. Nevertheless, they do not hint at how to find such formulas  explicitly. Goncharov envisioned that the explicit structure of the formulas should be related to {\bf cluster structures} on algebraic varieties. The Abel five-term relation gives the simplest example of this phenomena: the arguments of dilogarithms are the five cross-ratios, which are the cluster $\mathcal{X}$-coordinates on cluster variety $\mathfrak{M}_{0,5}.$ 

We suggest a definition of cluster polylogarithms on a cluster variety, see \S  \ref{SectionDefinitionClusterPolylogarithms}. The definition we give has been in the air for a long time; the key part of it is {\it the cluster adjacency property} for integrable symbols,  which appeared in physics literature (\cite{DFG18}). The existence of cluster polylogarithms of depth greater than one is far from obvious. We discovered the concept in an attempt to explain the properties of quadrangular polylogarithms, which played a key role in \cite{Rud20}. 

In \S \ref{SectionClusterConfigurationSpace}  we prove that  quadrangular polylogarithms span the space of cluster polylogarithms on $\mathfrak{M}_{0,m}$ and find a unique functional equation they satisfy. This equation implies various known functional equations found by Abel, Kummer, Goncharov, and Gangl. This is strong evidence that the defining equations for the Lie coalgebra of mixed Tate motives have cluster origin. This might be a shadow of certain non-commutative ``quantum mixed Tate motives'' yet to be discovered.

One of the most fundamental questions about polylogarithms is the nature and properties of the filtration by depth. Goncharov suggested an ambitious conjecture, giving a necessary and sufficient condition for a sum of polylogarithms to have a certain depth. In \S \ref{SectionHigherGangl} we show that our results can be used to prove a part of the depth conjecture in weight six.

\subsection{Cluster polylogarithms and iterated integrals}  \label{ClusterPolylogarithms}
There have been a few instances when the connection between cluster structures and polylogarithms manifested itself. Fock and Goncharov discovered that the dilogarithm appears as the generating function of a cluster mutation, see  \cite{FG09}. Next, the relation between scattering amplitudes, polylogarithms, and cluster varieties was noticed in \cite{GSVV10} and \cite{GGSVV14}. In particular, in \cite[Appendix B]{GGSVV14} a $40$-term equation for $\Li_3$ was found, where all arguments are cluster $\mathcal{X}$-coordinates on $\textup{Gr}(3,6).$ Finally, the functional equation relating  $\Li_{3,1}$ and  $\Li_4$ in \cite{GR18} was obtained by integrating the exponent of the $K_2$-symplectic form on $\mathfrak{M}_{0,7},$ so is of cluster origin as well.  In this section, we give an informal definition of cluster polylogarithms; a precise definition is given in \S \ref{SectionDefinitionClusterPolylogarithms}.

Cluster algebras were introduced by Fomin and Zelevinsky in \cite{FZ02}; a different approach to the subject was developed by Fock and Goncharov in \cite{FG06},  \cite{FG09}.  A cluster algebra  is a commutative ring equipped with a distinguished set of generators (called cluster variables or cluster $\mathcal{A}$-coordinates)  grouped into overlapping subsets called clusters. The clusters have the following property: for any cluster ${\bf a}$ and cluster variable $a\in {\bf a}$ there exists another cluster obtained from   ${\bf a}$ by replacing $a$ with another variable $a'$ related to $a$ by an exchange relation
\be \label{FormulaClusterExchange}
a a'=M_1+M_2
\ee
for monomials $M_1$ and $M_2$ in variables of the cluster ${\bf a}$ different from $a.$ The operation of passing from cluster ${\bf a}$ to $({\bf a}- \{a\})\cup \{a'\}$  is called a mutation in $a$; by a sequence of such operations, we obtain all clusters in the cluster algebra. 

Consider an irreducible affine algebraic variety $X$ over $\mathbb{Q};$ let $\mathbb{Q}[X]$ be the corresponding algebra of regular functions. Assume that  the ring $\mathbb{Q}[X]$ has a structure of cluster algebra. Every cluster variable $a\in \mathbb{Q}[X]$ defines a differential $1$-form $d\log(a)=\dfrac{d a}{a}\in \Omega^1_X.$
Every exchange relation (\ref{FormulaClusterExchange}) defines an identity in $\Omega^2_X$:
\be \label{FormulaClusterRelation}
d\log(a a')\wedge d\log(M_1)+d\log(M_2)\wedge d\log(a a') +d\log(M_1)\wedge d\log(M_2)=0.
\ee

Next, recall the notion of a  Chen iterated integral on the complex manifold $X^{sm}(\mathbb{C}).$ Let $\omega_1,\dots,\omega_n$ be  $1-$forms on $X$ and $\gamma\colon [0,1]\lra X^{sm}(\mathbb{C})$ be a piecewise smooth path. Consider the pullbacks of  forms $\gamma^*(\omega_i)=f_i(t)dt.$ The iterated integral 
is defined in the following way:
\[
\int_{\gamma} \omega_1\circ\dots \circ \omega_n=\int_{0\leq t_1\leq \dots\leq t_n\leq 1}f_1(t_1)dt_1\dots f_n(t_n)dt_n.
\]
Extending by linearity, we obtain a map
$
\int_\gamma \colon \bigl(\Omega^1_X\bigr)^{\otimes n}\lra \mathbb{C};
$
we will denote  $\omega_1\otimes\dots\otimes\omega_n$ by  $[\omega_1|\dots|\omega_n].$ In general, an iterated integral depends on the path $\gamma.$ Chen proved that it depends only on the homotopy class of the path if and only if 
\be\label{FormulaHomotopyInvariance}
\sum_{i=1}^n [\omega_1|\dots|\omega_{i-1}|d\omega_{i}|\omega_{i+1}|\dots|\omega_n ]+\sum_{i=1}^{n-1} [\omega_1|\dots|\omega_{i-1}|\omega_{i}\wedge\omega_{i+1}|\dots|\omega_n ]=0.
\ee

Informally, a cluster polylogarithm is a homotopy-invariant iterated integral 
\[
\int_\gamma \sum_{i} \bigl[d \log(a_1^i)|\dots|d \log(a_n^i)\bigr]
\]
on $X^{sm}(\mathbb{C})$ where for each $i$ there exists a cluster containing cluster variables $a_1^i,\dots,a_n^i.$ We call the latter condition {\it cluster adjacency}, it was inspired by \cite{DFG18}.

Consider the following simplest example. Let $X$ be a cone over Grassmannian $\textup{Gr(2,4)};$ denote by $\Delta_{ij}$ for $0\leq i<j\leq 3$ the Pl{\"u}cker coordinates. Then $X$ is a hypersurface in $\mathbb{C}^6$ defined by an equation
\be \label{FormulaPlucker}
\Delta_{02}\Delta_{13}=\Delta_{01}\Delta_{23}+\Delta_{03}\Delta_{12}.
\ee
The corresponding cluster algebra is said to have type $A_1;$ it has two clusters:
\[
\{\Delta_{01},\Delta_{12},\Delta_{23},\Delta_{03},\Delta_{02}\}
\text{\ \ and \ \ }
\{\Delta_{01},\Delta_{12},\Delta_{23},\Delta_{03},\Delta_{13}\}.
\]
The only exchange relation is (\ref{FormulaPlucker}). Iterated integral
\be\label{FormulaDilogarithm}
\int_\gamma \left[ d\log \left(-\frac{\Delta_{01}\Delta_{23}}{\Delta_{03}\Delta_{12}}\right)\bigg| d\log \left(-\frac{\Delta_{02}\Delta_{13}}{\Delta_{03}\Delta_{12}}\right)\right],
\ee
 is homotopy invariant. It satisfies the cluster adjacency condition, because cluster variables $\Delta_{01}, \Delta_{12},\Delta_{23},\Delta_{03}$ lie in both clusters and we have:
\begin{align*}
\left[ d\log \left(-\frac{\Delta_{01}\Delta_{23}}{\Delta_{03}\Delta_{12}}\right)\bigg| d\log \left(-\frac{\Delta_{02}\Delta_{13}}{\Delta_{03}\Delta_{12}}\right)\right]=&-\left[ d\log \left(-\frac{\Delta_{01}\Delta_{23}}{\Delta_{03}\Delta_{12}}\right)\bigg| d\log \left(-\Delta_{03}\Delta_{12}\right)\right]\\
&+\left[ d\log \left(-\frac{\Delta_{01}\Delta_{23}}{\Delta_{03}\Delta_{12}}\right)\bigg|  d\log (\Delta_{02})\right]\\
&+\left[ d\log \left(-\frac{\Delta_{01}\Delta_{23}}{\Delta_{03}\Delta_{12}}\right)\bigg| d\log (\Delta_{13})\right].
\end{align*}
For a certain $\gamma$, (\ref{FormulaDilogarithm}) is equal to  the dilogarithm of the cross-ratio
$\Li_2 \left(-\dfrac{\Delta_{01}\Delta_{23}}{\Delta_{03}\Delta_{12}}\right).$

In \S \ref{SectionDefinitionClusterPolylogarithms} we associate to every cluster algebra $\mathbb{Q}[X]$ a Lie coalgebra of {\it cluster integrable symbols} $\textup{CL}_\bullet(X)$ from which cluster polylogarithms are obtained by integration. In all examples that we are aware of, there exists a canonical way to integrate an element of $\textup{CL}_\bullet(X)$ to a multivalued function. 

\subsection{Quadrangular polylogarithms are cluster polylogarithms in type $A$} \label{SectionIntroductionClusterPolylogsOnConfigurationSpace}

Examples of cluster  polylogarithms are not easy to construct because the conditions of cluster adjacency and homotopy invariance seem to be ``transversal'' to each other. In this section, we classify cluster integrable symbols on the cone over the Grassmannian $\textup{Gr}(2,m+2),$ which has  cluster algebra structure of  type~$A_{m-1}$. We will see that the corresponding cluster polylogarithms are precisely the quadrangular polylogarithms introduced in \cite{Rud20}.

Consider a Grassmannian $\textup{Gr}(2,m+2)$ in its Pl{\"u}cker embedding and denote by $\Delta_{ij}$ for $0\leq i<j\leq m+1$  the Pl{\"u}cker coordinates. In order to define  cluster algebra structure on $\mathbb{Q}[\textup{Gr}(2,m+2)]$, consider a convex polygon with vertices  labeled by indices $0,\dots, m+1.$ The cluster variables are Pl{\"u}cker coordinates $\Delta_{ij};$ they  correspond to sides and diagonals of the convex $(m+2)$-gon. Two cluster variables lie in the same cluster if and only if the corresponding chords have no common interior points. We will see that cluster integrable symbols of weight greater than one are invariant under the torus action on the Grassmannian and can be viewed as functions on $\mathfrak{M}_{0,m+2}.$ We denote the corresponding  space of cluster integrable symbols by $\textup{CL}_\bullet\left(\mathfrak{M}_{0,m+2}\right).$

For $m=4,5,$ the only cluster integrable symbols of weight $n\geq 2$ are symbols of classical polylogarithms of cross-ratios. For $m=6,$ a new cluster integrable symbol appears in weight four. This function (modulo lower depth corrections) is known in physics literature under the name  ``$A_3$-function'', in \cite{GR18} under the name $\mathrm{L}_{4}^1$, and  in \cite{Rud20} under the name ``quadrangular polylogarithm $\textup{QLi}_{4}.$'' 

\begin{theorem}\label{TheoremClusterTypeA} Dimension of the space  $\textup{CL}_n\left (\mathfrak{M}_{0,m+2}\right)$ of weight $n\geq 2$ equals to
\[
{m+1 \choose 3}+{m+1 \choose 4}+\dots+{m+1 \choose n+1}.
\] 
The space $\textup{CL}_n\left (\mathfrak{M}_{0,m+2}\right)$ is generated by symbols of quadrangular polylogarithms 
\[
\textup{QLi}_{n}(x_{i_0},\dots,x_{i_{2r+1}})
\quad \text{ for }\quad  0\leq i_0< \dots< i_{2r+1}\leq m+1.
\]
\end{theorem}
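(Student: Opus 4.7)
The plan is to exploit the type-$A$ combinatorics of the associahedron, using that cluster variables for $\textup{Gr}(2,m+2)$ correspond to chords of a convex $(m+2)$-gon, and to proceed by induction on the number of marked points. First I would set up the local picture: a cluster consists of a triangulation $T$ together with the $m+2$ frozen boundary sides, and cluster adjacency asserts that each tensor slot of a representative of a class in $\textup{CL}_n(\mathfrak{M}_{0,m+2})$ is supported on a set of pairwise non-crossing chords. Working on $\mathfrak{M}_{0,m+2}$ means quotienting by the $(m+2)$-torus action, so the relevant $1$-forms may be taken to be $d\log$ of ratios of Pl{\"u}cker coordinates. In these terms the integrability condition (\ref{FormulaHomotopyInvariance}) becomes a purely combinatorial identity on chord tensors that I would then analyze.

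With this setup, the easy direction -- that each $\textup{QLi}_n(x_{i_0},\dots,x_{i_{2r+1}})$ defines a class in $\textup{CL}_n(\mathfrak{M}_{0,m+2})$ -- follows from the construction in \cite{Rud20}. Its symbol decomposes into tensor products of $d\log$'s of Pl{\"u}cker coordinates on the sub-$(2r+2)$-gon with vertices $i_0,\dots,i_{2r+1}$, which can be arranged to be non-crossing, and integrability is built into its iterated-integral definition.

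The main challenge is the reverse inclusion together with the dimension count. I would induct on $m$. Fix $v=m+1$; symbols whose tensor slots avoid $v$ entirely are pulled back from $\mathfrak{M}_{0,m+1}$ and, by induction, are spanned by $\textup{QLi}$'s on subsets not containing $v$. It remains to analyze symbols that do involve $v$. Here cluster adjacency combined with the exchange relation (\ref{FormulaPlucker}) is used as follows: any tensor slot using a chord $(v,j)$ sits in many triangulations, and the integrability condition across mutations, made coherent by the simple connectivity of the associahedron, pins down such symbols modulo lower-order corrections to be linear combinations of $\textup{QLi}_n(x_{i_0},\dots,x_{i_{2r-1}},x_{v})$. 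The hardest step will be this propagation argument -- making it tight, so that every relation imposed by cluster adjacency across the Catalan-many triangulations is accounted for by a known functional equation of quadrangular polylogarithms and no new one is forced.

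To conclude, I would identify a basis among the $\textup{QLi}$'s by quotienting by the unique functional equation promised in \S\ref{SectionClusterConfigurationSpace}: after imposing this equation on each even-sized subset of vertices, the resulting independent generators may be indexed by $k$-subsets of $\{1,\dots,m+1\}$ for $3\leq k\leq n+1$, producing exactly $\sum_{k=3}^{n+1}\binom{m+1}{k}$ basis elements and confirming the dimension formula.
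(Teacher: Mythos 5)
Your outline gets the global architecture right --- the easy containment of the $\textup{QLi}$ symbols via weak separation and the coproduct, induction on the number of marked points by splitting off the symbols supported away from a fixed vertex, and a count of $\textup{QLi}$'s modulo the functional equation --- but the step you yourself flag as the hardest is exactly where the proposal has no actual argument, and the mechanism you gesture at is not the one that works. ``Integrability across mutations, made coherent by the simple connectivity of the associahedron'' does not by itself bound the piece of $\textup{CL}_n(\mathfrak{M}_{0,m+2})$ involving the distinguished vertex: you never identify which linear-algebraic constraint this propagation imposes. The paper's upper bound instead uses the projection $\textup{pr}_m$ of \S\ref{SectionIIonConfSpace} onto the cofree Lie coalgebra on the forms $f_j=d\log(t_m-t_j)$ through the last vertex, yielding the exact sequence (\ref{ExactSequenceInv}); the decisive point is that the image of $\textup{pr}_m$ lands in the subspace $\textup{Inv}_n(m)$ of elements that are invariant under the translations $T_f\colon f_i\mapsto f_i+f$ (this is where projective/torus invariance enters, and it is the source of the binomial coefficients) and are representable by weakly ordered tensors. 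Lemma \ref{LemmaConormalizedInvariantPolynomials} then computes $\dim\textup{Inv}_n(m)=\binom{m}{2}+\dots+\binom{m}{n}$ by exhibiting, via the cosimplicial Dold--Kan correspondence, a one-dimensional conormalized piece spanned by an explicit element $a_n(m)$, and summing the resulting inequalities over $m$ gives the claimed bound. Nothing in your proposal plays the role of the translation-invariance constraint, so the propagation step, even if made precise, would not obviously produce the right numbers.

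There is a second, smaller gap on the lower-bound side. Quotienting the $\textup{QLi}$'s by the functional equation (\ref{FormulaMainFunctionalEquation}) only bounds their span from above; to conclude you must show that the surviving generators indexed by $k$-subsets, $3\leq k\leq n+1$, are linearly independent, and that passing to symbols does not collapse anything further. The paper handles the first point by checking that the conormalized pieces $CN(\mathcal{Q}_n)^m$ are nonzero (Proposition \ref{PropositionCNQLi}) and assembling them by Dold--Kan, and the second by Lemma \ref{LemmaEqualityOfSymbolsImpliesConstant}, which shows the symbol map is injective on $\mathcal{L}^+_n(\textup{Conf}_{m+2})$ modulo constants. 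Your proposal asserts independence (``the resulting independent generators may be indexed by\dots'') without an argument; you would need something like the projection maps $\textup{pr}_i$ to separate the generators.
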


Quadrangular polylogarithms satisfy the following equation: for $N \geq n+2$, we have
\begin{equation}\label{FormulaEquationQuadrangular}
\sum_{0\leq i_0<\dots <i_{2r+1}\leq N}(-1)^{i_0+\dots+i_{2r+1}}\textup{QLi}_{n}^{\textup{sym}}(x_{i_0},x_{i_1},\dots, x_{i_{2r+1}})=0,
\end{equation}
where $\textup{QLi}_{n}^{\textup{sym}}$ is a symmetrized version of $\textup{QLi}$ defined in \S \ref{SectionQuadrangular}. In \S \ref{SectionSpaceClusterIntegrable} we show that all linear relations between quadrangular polylogarithms follow from  (\ref{FormulaEquationQuadrangular}).

There exist two presentations for quadrangular polylogarithms. The first one expresses quadrangular polylogarithms via correlators (\cite[Definition 5.2]{Rud20}), and the second one, called {\it quadrangulation formula}, expresses them via multiple polylogarithms (\cite[Theorem 1.2]{Rud20}).   Equation (\ref{FormulaEquationQuadrangular}) can be combined with the quadrangulation formula to give new equations for polylogarithms.  Taking $n=2, N=5,$ we get the Abel $5$-term relation for $\Li_2$;   for $n=3, N=6,$ we get the 9-term relation of Kummer and  the 22-term relation of Goncharov for $\Li_3$, and for $n=4, N=7,$  we get the equation
${\bf \mathbf{Q}_{4}}$ from \cite[\S1.2.1]{GR18} and  the  931-term relation for $\Li_4$ found by Gangl (see \cite{Gan16}). We are tempted to conjecture that all functional equations for multiple polylogarithms follow from (\ref{FormulaEquationQuadrangular}).

We expect that analogs of Theorem  \ref{TheoremClusterTypeA} hold in much greater generality. In particular, we have experimental evidence that similar results hold for all cluster algebras of finite type. 

The original construction of quadrangular polylogarithms appeared rather ad hoc. In \S \ref{SectionSpaceClusterIntegrable} we prove Corollary \ref{CorollaryProjectiveInvariantII}, which gives yet another explanation of the role played by quadrangular polylogarithms. Notice that iterated integrals 
$
\textup{I}(x_0;x_1,\dots,x_n;x_{n+1})
$
are invariant under affine transformations but not under projective transformations.  Apparently, it is possible to ``correct'' them by adding degenerate terms of the form 
\[
\textup{I}(x_{i_0};x_{i_1},\dots,x_{i_n};x_{i_{n+1}}) \quad \text{for}\quad 0\leq i_0 \leq \dots\leq i_{n+1}\leq n+1
\]
to restore  projective invariance. The resulting ``corrected'' functions are precisely quadrangular polylogarithms.

\subsection{Application: higher Gangl formula in weight six} \label{SectionGoncharovDepth}
 Consider a Lie coalgebra $\mathcal{L}(\mathrm{F})$ discussed in \S \ref{SectionIntroductionPolylogarithms}. This coalgebra is filtered by depth; denote by $\textup{gr}^{\mathcal{D}}_\bullet \mathcal{L}(\mathrm{F})$ the associated graded space. The subspace spanned by classical polylogarithms $\Li_n(a)$ is denoted by $\textup{gr}^{\mathcal{D}}_1\mathcal{L}(\mathrm{F})=\mathcal{B}_n(\mathrm{F}).$  Now we are ready to formulate the depth conjecture of Goncharov (\cite[Conjecture 7.6]{Gon01}).

\begin{conjecture}[Depth conjecture] \label{ConjectureDepth}	A linear combination of multiple polylogarithms has depth less than or equal to $k$ if and only if its $k-$th iterated truncated coproduct vanishes. Moreover,  the Lie coalgebra $\textup{gr}^{\mathcal{D}}_\bullet \mathcal{L}(\mathrm{F})$ is cofree, cogenerated by polylogarithms of depth one.
\end{conjecture}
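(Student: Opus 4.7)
The conjecture has two parts: a characterization of the depth filtration via iterated truncated coproducts, and a cofreeness statement for $\textup{gr}^{\mathcal{D}}_\bullet \mathcal{L}(\mathrm{F})$. The ``only if'' direction of the first part — that depth $\leq k$ forces vanishing of the $k$-th iterated truncated coproduct — is formal: it follows from compatibility of Goncharov's coproduct with the depth filtration, where each cobracket strictly lowers depth by at most one. So the substance of the conjecture lies in the converse and in cofreeness.

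My plan is to attack the converse by producing an abundance of explicit depth-reduction formulas using the cluster machinery set up earlier in the paper. The key input is that equation (\ref{FormulaEquationQuadrangular}) expresses zero as an alternating sum of $\textup{QLi}_n^{\textup{sym}}$'s, while the quadrangulation formula of \cite{Rud20} expresses each $\textup{QLi}_n$ as an explicit combination of multiple polylogarithms. Composing these two identities yields functional equations among multiple polylogarithms whose depth profile is tightly controlled. The strategy is then: given a combination $\xi$ with vanishing $k$-th iterated truncated coproduct, inductively lift the depth-$>k$ part of $\xi$ into the span of the left-hand sides of such cluster-derived equations, modulo terms of depth $\leq k$. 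Since by Theorem \ref{TheoremClusterTypeA} the quadrangular polylogarithms exhaust the cluster integrable symbols on $\mathfrak{M}_{0,m+2}$, one expects these reductions to be complete for the pieces of $\mathcal{L}(\mathrm{F})$ visible through cluster structures on $\mathfrak{M}_{0,m+2}$. The higher Gangl application in \S \ref{SectionHigherGangl} realizes this program in weight six.

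The main obstacle is cofreeness. Merely matching the depth filtration with the coproduct kernel is weaker than asserting that $\textup{gr}^{\mathcal{D}}_\bullet \mathcal{L}(\mathrm{F})$ is the cofree Lie coalgebra cogenerated by $\bigoplus_n \mathcal{B}_n(\mathrm{F})$; one also needs to rule out unexpected relations between iterated cobrackets of classical polylogarithms. I would attempt this by a dimension-counting/Koszul argument, comparing the Poincar\'e series predicted by cofreeness against the dimensions of cluster integrable symbol spaces from Theorem \ref{TheoremClusterTypeA} and its (conjectured) analogues for other finite-type cluster algebras. A complete proof in all weights seems well out of reach with present techniques: it would essentially require proving that every functional equation for multiple polylogarithms has cluster origin, which the paper itself raises only as a tempting conjecture. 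The realistic intermediate milestone — and, I suspect, what the authors aim at — is extending the weight-six partial result depth by depth, using cluster structures on $\mathfrak{M}_{0,n}$ and on Grassmannians of higher rank to supply the missing depth reductions one weight at a time.
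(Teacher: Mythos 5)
The statement you were asked to prove is Goncharov's depth conjecture, which the paper records as Conjecture \ref{ConjectureDepth} and does \emph{not} prove; there is no proof in the paper to compare against, and your proposal --- as you candidly admit --- is a research program rather than a proof. What the paper actually establishes is: (i) the easy direction, namely that $\overline{\Delta}^{[k-1]}$ vanishes on $\mathcal{D}_{k-1}\L_\bullet(\F)$ (the unnumbered Proposition in \S\ref{SectionDepthConjecture}; note this is not purely formal from filtration compatibility as you suggest, since it needs shuffle relations to force iterated integrals such as $\textup{I}^{\mathcal{L}}(0;0,\dots,0,a,0,\dots,0;b)$ down to depth one); (ii) a reduction of the first obstructed case $n=2k$ to Conjecture \ref{ConjectureDepthObstr}, split into the higher Zagier formulas (Conjecture \ref{ConjectureHigherZagier}) and the higher Gangl formula (Conjecture \ref{ConjectureHigherGangl}); and (iii) a proof of the higher Gangl formula for $k=3$ via degenerations of the relation $S(x_0,\dots,x_8)\equiv 0$ coming from (\ref{FormulaEquationQuadrangular}). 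Even Theorem \ref{TheoremDepth6} is a statement modulo the functions (\ref{FormulaZagierWeightSix}), i.e.\ modulo the still-open weight-six Zagier formulas.

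Two concrete gaps in your plan. First, your inductive lifting of the depth-$>k$ part of $\xi$ into the span of cluster-derived equations has no completeness mechanism: it would succeed only if every functional equation for multiple polylogarithms followed from (\ref{FormulaEquationQuadrangular}), which the paper raises only as a temptation to conjecture; Theorem \ref{TheoremClusterTypeA} classifies cluster integrable symbols on $\mathfrak{M}_{0,m+2}$, which is a statement about a small subspace of symbols, not a classification of relations in $\L_\bullet(\F)$. Second, cofreeness cannot be attacked by dimension counting, because the dimensions of $\L_n(\F)$ over a general field are unknown (they are tied to conjectures of Beilinson--Soul\'e type), so there is no Poincar\'e series to compare against. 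The paper's actual route to injectivity is to build an explicit candidate inverse $\mathcal{I}$ to $\overline{\Delta}^{[k-1]}$ on $\textup{CoLie}_k(\mathcal{B}_2(\F))$ and verify that it annihilates the five-term relations $\mathrm{R}_2(\F)\otimes\textup{CoLie}_{k-2}(\Q[\F^\times])$; this verification is precisely the hard content of \S\ref{SectionHigherGangl} and is carried out only for $k\le 3$.
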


The depth conjecture  has several remarkable consequences. For instance, it would imply that volumes of hyperbolic {\it manifolds} can be expressed via classical polylogarithms. The main reason is that coproduct of the volume of a polytope is related to the Dehn invariant (see \cite[\S 3]{Gon99}), and the Dehn invariant of a hyperbolic manifold is equal to zero. Also, the depth conjecture would be a crucial part of the proof of  Zagier conjecture; the remaining part would be to show that special values of zeta function can be expressed via multiple polylogarithms.  

In weights two and three, the depth conjecture was known to be true for a long time. In weight four, it was proved by Gangl (see \cite{Gan16}) after many months of computer-assisted search. Another proof exploiting the geometry of cluster varieties  was given in (\cite[Theorem 1.14]{GR18}).
The ``unobstructed'' case $2k\geq n$ was proved in (\cite[Theorem 1.1]{Rud20}) using the properties of quadrangular polylogarithms. Also, see \cite{Cha17} and \cite{CGR19b} for some other results about depth reduction.

For $n=2k+2$ Conjecture \ref{ConjectureDepth} reduces to the following explicit statement:
\begin{conjecture}[Depth conjecture, first obstructed case] \label{ConjectureDepthObstr} For $a_1,\dots,a_{k-1}\in \F^{\times}$ and $x_0,\dots,x_4\in \mathbb{P}^1_\F$  the sum
\[ 
	\sum_{i=0}^4(-1)^i\textup{Li}_{k;1,\dots,1}^{\L}(a_1,\dots,a_{k-1},[x_0,\dots,\widehat{x_i},\dots,x_4]) 
\]
has depth at most $k-1.$
\end{conjecture}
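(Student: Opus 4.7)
The strategy is to produce an explicit depth-$\leq k-1$ representation of the alternating sum by combining the universal functional equation (\ref{FormulaEquationQuadrangular}) for quadrangular polylogarithms with the quadrangulation formula of \cite[Theorem 1.2]{Rud20}. The quadrangulation formula expresses $\textup{QLi}_n$ as an explicit $\mathbb{Z}$-linear combination of multiple polylogarithms of the shape $\textup{Li}_{n_0;n_1,\ldots}^{\L}$, so any identity among quadrangular polylogarithms translates into an identity among multiple polylogarithms whose top-depth component is under precise combinatorial control.

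\textbf{Steps.} First, I would embed the conjecture's data into a single cluster configuration, viewing the five points $x_0,\dots,x_4$ together with the auxiliary arguments $a_1,\dots,a_{k-1}$ (regarded as additional marked points on $\mathbb{P}^1$) as a configuration in $\mathfrak{M}_{0,k+4}$ or a mild enlargement thereof. Second, I would apply equation (\ref{FormulaEquationQuadrangular}) at weight $n = 2k+2$ for an $N$ satisfying $N \geq n+2$; this yields a nontrivial linear relation among $\textup{QLi}_{2k+2}^{\textup{sym}}$ evaluated at subsets of the marked points. Third, after expanding each $\textup{QLi}$ term via the quadrangulation formula, collecting the resulting $\textup{Li}_{k;1,\dots,1}^{\L}$ contributions, and specializing to the target configuration, the alternating sum of the conjecture should emerge as the depth-$k$ part, while all remaining contributions are $\textup{QLi}$'s on strictly fewer points, whose depth, by the quadrangulation formula combined with the known unobstructed cases of the depth conjecture \cite[Theorem 1.1]{Rud20}, is at most $k-1$.

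\textbf{Main obstacle.} The principal difficulty lies in the second step: realizing the auxiliary parameters $a_1,\dots,a_{k-1}$ as cross-ratios inside a higher cluster configuration so that (\ref{FormulaEquationQuadrangular}) specializes to precisely the alternating sum of the conjecture. Identifying the right $N$, the right degeneration, and the right assignment of the $a_j$'s to chords of the $(N{+}1)$-gon, and then tracking which $\textup{QLi}$ terms survive with nonzero coefficient, requires delicate combinatorial bookkeeping: alternating signs, argument reorderings coming from the symmetrization in $\textup{QLi}^{\textup{sym}}$, and the lower-depth corrections produced by the quadrangulation formula must all be accounted for. A secondary subtlety is that to conclude depth $\leq k-1$ from an identity modulo depth $\leq k-1$, one needs the depth conjecture itself to be available in strictly smaller weight; this is known through weight five by \cite{Gan16} and \cite{GR18}, which is exactly what the weight-six case ($k=2$) in \S\ref{SectionGoncharovDepth} requires.
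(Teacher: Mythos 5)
The first thing to note is that this statement is a \emph{conjecture} in the paper, not a theorem: the authors prove it only for $k=2$ (where it is Gangl's theorem, reproved in \S\ref{SectionHigherGanglFour}), and for $k=3$ they establish only the weaker Theorem \ref{TheoremDepth6}, which expresses the alternating sum via depth-two terms \emph{plus} the function \eqref{FormulaZagierWeightSix}. So there is no proof in the paper to match, and a complete argument here would be a substantial new result rather than a reconstruction.

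Your strategy --- combine \eqref{FormulaEquationQuadrangular} with the quadrangulation formula --- is indeed the engine behind all the partial results in \S\ref{SectionHigherGangl}, so the approach is well aimed; but as written it has a structural gap beyond the ``combinatorial bookkeeping'' you flag. The mechanism of \S\ref{SectionFunctionalHigherGangl} only produces identities in the quotient $\mathcal{G}_k$ of $\mathrm{gr}^{\mathcal{D}}_k\L_{2k}(\F)$ by the subspace spanned by the higher Zagier elements \eqref{FormulaZagier1}--\eqref{FormulaZagier2}: the map $\mathcal{I}$ is defined on $\textup{CoLie}_k(\textup{P}_\F)$, and the relations $\{x\}+\{x^{-1}\}$ and $\{x\}+\{1-x\}$ in $\textup{P}_\F$ are exactly what make $Q$ and $S$ antisymmetric and let the degeneration arguments run. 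Consequently, the best this route can deliver is Conjecture \ref{ConjectureHigherGangl} (depth reduction modulo Zagier-type terms), not Conjecture \ref{ConjectureDepthObstr} itself; to finish one also needs Conjecture \ref{ConjectureHigherZagier}, which is open for $k\geq 3$. Your ``secondary subtlety'' misdiagnoses this: the missing input is not the depth conjecture in lower weight but the depth reduction of $\Li^{\L}_{k;1,\dots,1}(a_1,\dots,a_k)+\Li^{\L}_{k;1,\dots,1}(1-a_1,\dots,a_k)$ and its inversion analogue in the \emph{same} weight $2k$. Moreover, even granting that, passing from $S(x_0,\dots,x_{2k+2})\equiv 0$ to the vanishing of the individual tensors $Q\otimes\dots\otimes Q\otimes S$ is not routine: for $k=3$ it occupies all of \S\ref{SectionHigherGanglSix} and hinges on a carefully chosen system of degenerations $D_1,\dots,D_9$ and a hidden antisymmetry of the brackets $[[0,1,2,3,4,5,6]]$, and no analogue of that argument is known for general $k$.
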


All known cases of the depth conjecture can be derived from a combination of equation (\ref{FormulaEquationQuadrangular}) and the quadrangulation formula. We prove that equation (\ref{FormulaEquationQuadrangular}) for $n=6, N=9$ implies the following result, analogous to Gangl's formula in weight six.

\begin{theorem}\label{TheoremDepth6} For $a_1, a_2\in \F^{\times}$ and $x_0,\dots,x_4\in \mathbb{P}^1_\F$  the sum
\[
\sum_{i=0}^4(-1)^i\textup{Li}_{3;1,1,1}^{\L}(a_1,a_2,[x_0,\dots,\widehat{x_i},\dots,x_4]) 
\] can be expressed via polylogarithms of depth at most two and a function
\be\label{FormulaZagierWeightSix}
\Li_{3;1,1,1}^{\L}(b_1,b_2,b_3)+\Li_{3;1,1,1}^{\L}(b_1,b_2,1-b_3)\quad \text{for}\quad b_1, b_2, b_3\in \F^{\times}.
\ee
\end{theorem}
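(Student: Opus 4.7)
The plan is to derive Theorem \ref{TheoremDepth6} by specializing the master equation (\ref{FormulaEquationQuadrangular}) with $n=6$, $N=9$, and then applying the quadrangulation formula of \cite[Theorem 1.2]{Rud20} to rewrite every occurrence of $\textup{QLi}_6^{\textup{sym}}$ as a signed sum of multiple polylogarithms $\Li_{3;1,1,1}^{\L}$, plus explicit lower-depth corrections. This is the same mechanism that produced the Abel, Kummer, Goncharov, and Gangl relations from (\ref{FormulaEquationQuadrangular}) in lower weights, so the overall scheme of proof is already validated; the new content lies entirely in the weight-six bookkeeping.

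First I would expand
\[
\sum_{r=1}^{4}\ \sum_{0\leq i_0<\dots <i_{2r+1}\leq 9}(-1)^{i_0+\dots+i_{2r+1}}\textup{QLi}_6^{\textup{sym}}(x_{i_0},\dots,x_{i_{2r+1}})=0
\]
and substitute the quadrangulation formula term by term. Then I would specialize the ten points $x_0,\dots,x_9$: five of them to the projective points $x_0,\dots,x_4$ appearing in Theorem \ref{TheoremDepth6}, while the remaining five encode $a_1, a_2$ together with a handful of ``degenerate'' limits such as $0$, $1$, $\infty$, chosen in exactly the way that produced the unobstructed depth reduction in \cite[Theorem 1.1]{Rud20}. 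Under this specialization, the subsum consisting of tuples containing all five ``honest'' points should collapse, after the quadrangulation formula, to the alternating sum $\sum_{i=0}^4(-1)^i\Li_{3;1,1,1}^{\L}(a_1,a_2,[x_0,\dots,\widehat{x_i},\dots,x_4])$ that appears on the left-hand side of the theorem.

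The remaining tuples I would then sort by length. Tuples of lengths four and six contribute, after quadrangulation, only terms of depth at most two, because their quadrangulations have at most one interior diagonal producing a genuine $\Li_{3;1,1,1}^{\L}$-piece and these pieces cancel pairwise by embedded Abel-type relations; thus they feed directly into the ``depth at most two'' bucket of the theorem. Tuples of lengths eight and ten produce the only genuinely depth-four residue, and by the alternating signs of (\ref{FormulaEquationQuadrangular}) most of this residue cancels among itself. The surviving piece is what must be identified with the obstruction function (\ref{FormulaZagierWeightSix}).

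The main obstacle is this final identification. Concretely, after the cancellations, one is left with a sum of $\Li_{3;1,1,1}^{\L}$-values which I claim is symmetric under the substitution $b_3\leftrightarrow 1-b_3$ in its last argument, and therefore reorganises into sums of the form $\Li_{3;1,1,1}^{\L}(b_1,b_2,b_3)+\Li_{3;1,1,1}^{\L}(b_1,b_2,1-b_3)$. To verify this intrinsically, I would compute the $\mathcal{B}_3\wedge\mathcal{B}_3$-component of the truncated coproduct (the only component obstructing reduction to depth two) on both sides and check its invariance under this involution; the involution itself reflects a symmetry of the exchange relation inside a single cluster of the $A_{m-1}$-structure, as in \S \ref{SectionIntroductionClusterPolylogsOnConfigurationSpace}. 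Tracking exactly which quadrangulations survive the cancellations, and verifying that they pair up correctly under $b_3\leftrightarrow 1-b_3$, is the most delicate part of the argument and the reason the result is specific to weight six, where the obstruction can no longer be absorbed as in the Gangl weight-four case.
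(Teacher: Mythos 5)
Your starting point matches the paper's --- Theorem \ref{TheoremDepth6} is indeed extracted from equation (\ref{FormulaEquationQuadrangular}) in weight six together with the quadrangulation formula --- but the plan breaks down at the decisive step, and the paper's actual mechanism is absent from it. The paper does not specialize ten points to $x_0,\dots,x_4$, $a_1$, $a_2$ and constants such as $0,1,\infty$ and then sort quadrangulations by tuple length. Instead it reformulates the problem in the quotient $\mathcal{G}_3$ of \S\ref{SectionDepthConjecture}: one must show that the map $\mathcal{I}\colon\textup{CoLie}_3(\textup{P}_\F)\to\mathcal{G}_3$ kills every tensor having a five-term element $S$ in one slot, i.e.\ that $Q_1\otimes Q_2\otimes S\equiv 0$ together with its reorderings. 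This is carried out in \S\ref{SectionHigherGanglSix} via nine degenerations $D_1,\dots,D_9$ of $S(x_0,\dots,x_8)\equiv 0$ obtained by \emph{repeating} arguments (e.g.\ $S(x_0,x_1,x_0,x_1,x_2,x_3,x_4,x_5,x_6)$), the bracket $[[0,\dots,6]]=Q\otimes Q\otimes S$, and a lemma asserting full antisymmetry of this bracket under $\mathbb{S}_7$, from which all such terms vanish. None of this combinatorial core is present in your proposal, and it is not replaceable by the bookkeeping you describe.

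The concrete gap is your last paragraph. You propose to identify the surviving residue (it has depth three, not four: $\Li^{\L}_{3;1,1,1}$ has three arguments) with the function (\ref{FormulaZagierWeightSix}) by checking that a component of its truncated coproduct is invariant under $b_3\leftrightarrow 1-b_3$. Invariance of a coproduct component under an involution does not imply that the element itself is a sum of involution-symmetrized terms; that implication is exactly the injectivity statement (Conjecture \ref{ConjectureHigherGangl}) of which the theorem is a case, so the argument is circular. The paper avoids this by working modulo (\ref{FormulaZagier1}) and (\ref{FormulaZagier2}) from the outset and proving an identity in $\mathcal{G}_3$, not by matching coproducts. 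You also omit a needed external input: to pass from the quotient by both (\ref{FormulaZagier1}) and (\ref{FormulaZagier2}) to the statement of Theorem \ref{TheoremDepth6}, which allows only the function (\ref{FormulaZagierWeightSix}), the paper invokes the reduction of $\Li^{\L}_{3;1,1,1}(a_1,a_2,a_3)+\Li^{\L}_{3;1,1,1}\bigl(\tfrac{1}{a_1},a_2,a_3\bigr)$ due to Charlton, Gangl, and Radchenko, which your proof does not supply.
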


\paragraph{\bf{Acknowledgements}} 
We thank A.~Beilinson, S.~Bloch, S.~Charlton,  V.~Fock,  S.~Fomin, B.~Farb,  H.~Gangl,  A.~Goncharov, D.~Krachun, and D.~Radchenko for useful discussions and comments.

\section{Lie coalgebra of multiple polylogarithms}  \label{SectionLieCoalgebraMP} 
\subsection{Inductive definition of the Lie coalgebra of multiple polylogarithms}
Let $\F$ be a field of characteristic zero. We aim to define the graded Lie coalgebra $\L_\bullet(\F)$ of multiple polylogarithms. This definition mimics Goncharov's construction of higher Bloch groups in \cite{Gon95B}. A similar construction appeared  in \cite{GoncharovMSRI}, see also  \cite{GKLZ}.

Consider a $\Q-$vector space  $\mathcal{A}_n(\F)$ for $n\geq 1$ generated by ordered tuples of points $(x_0,x_1,\dots,x_n)\in \F^{n+1}$ modulo relations
\[
(x_0,x_1,\dots, x_{n-1}, x_n)=(x_1,x_2,\dots,x_n,x_0)
\] 
and $(x,x,\dots,x)=0.$ We define a Lie cobracket 
$\Delta\colon \mathcal{A}_\bullet(\F)\lra \Lambda^2 \mathcal{A}_\bullet(\F)$
by formula
\[
\Delta (x_0,\dots, x_n)=\sum_{\cyc}\sum_{i=1}^{n-1}(x_{0}, x_1, \dots, x_i)\wedge (x_{0},  x_{i+1}, \dots, x_n),
\]
where we put 
\[
\sum_{\cyc}f(x_0,\dots,x_n)=\sum_{j\in \mathbb{Z}/(n+1)\mathbb{Z}}f(x_j,\dots,x_{j+n}).
\]
The coJacobi identity can be easily verified.

Consider a field $\mathrm{K}$ with a discrete valuation $\nu\colon \mathrm{K}\lra \Z\cup\{\infty\};$ denote by $\mathrm{k}$  the residue field. Given a  uniformizer $\pi\in \mathrm{K},$  we define specialization homomorphism 
\[
\mathrm{Sp}_{\nu, u}\colon \mathcal{A}_n(\mathrm{K})\lra \mathcal{A}_n(\mathrm{k})
\] 
by the following formula. Let $m=\textup{min}_{0\leq j \leq n} \ \nu(x_j).$ Then, we have
\be \label{FormulaSpecialization}
\mathrm{Sp}_{\nu, u}(x_0,x_1,\dots, x_n)=(y_0,y_1,\dots,y_n),
\ee
where
\[
y_i=
\begin{cases}
\overline{x_i \pi^{-m}} & \text{\ if \ }\nu(x_i)=m,\\
0 & \text{\ if \ }\nu(x_i)>m.\\
\end{cases}
\]

\begin{lemma}\label{LemmaSpecializationCoproduct}
For any discrete valuation on $\F$ the Lie cobracket $\Delta\colon \mathcal{A}_\bullet(\F)\lra \Lambda^2 \mathcal{A}_\bullet(\F)$ commutes with the specialization.
\end{lemma}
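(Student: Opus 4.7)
The plan is to expand both sides over the same index set and match terms individually, pairing up the ``bad'' terms that contribute only to one side.

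Both $\Delta \circ \mathrm{Sp}_{\nu,u}$ and $(\mathrm{Sp}_{\nu,u} \otimes \mathrm{Sp}_{\nu,u}) \circ \Delta$ are sums indexed by $(j,i)$ with $j \in \mathbb{Z}/(n+1)\mathbb{Z}$ and $1 \leq i \leq n-1$. For fixed $(j,i)$ let $A = \{x_j,\dots,x_{j+i}\}$ and $B = \{x_j,x_{j+i+1},\dots,x_{j+n}\}$ be the two arcs sharing the pivot $x_j$, and set $m_A = \min_{x \in A}\nu(x)$, $m_B = \min_{x \in B}\nu(x)$, $m = \min_k \nu(x_k)$. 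Note that $\min(m_A,m_B) = m$. If $m_A = m_B = m$ (Case 1), the two specialization conventions agree and the terms on both sides are literally equal. The remaining Case 2 (say $m_A > m$) is where the two conventions diverge.

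The main observation is that in Case 2 the LHS contribution vanishes, because the first factor $\mathrm{Sp}_{\nu,u}(x_j,\dots,x_{j+i})$ computed with the global rescaling $\pi^{-m}$ becomes the all-zero tuple $(0,\dots,0)$, which equals $0$ in $\mathcal{A}_i(\mathrm{k})$ by the relation $(x,x,\dots,x)=0$. The RHS contribution is nonzero in general, so I must show that all Case-2 terms on the RHS cancel in pairs inside $\Lambda^2 \mathcal{A}_\bullet(\mathrm{k})$. The pairing is as follows: a cyclic arc $T = \{x_k,x_{k+1},\dots,x_{k+r}\} \subseteq \{y : \nu(y) > m\}$ appears as the first arc when $(j,i) = (k,r)$, and (up to cyclic rotation) as the second arc when $(j,i) = (k+r,\,n-r)$. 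Writing $u \in \mathcal{A}_r(\mathrm{k})$ for the specialization of the tuple $T$ (rescaled by $\pi^{-m_T}$) and $v \in \mathcal{A}_{n-r}(\mathrm{k})$ for the specialization of the complementary arc (which starts with $0$ because its pivot lies in the bad set), the two partner terms become $u \wedge v$ and, after applying the cyclic relation to identify the rotated tuples, $v \wedge u = -u \wedge v$. They cancel.

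The hard part of the argument is not conceptual but bookkeeping: I have to verify that the two partner specializations genuinely produce the same elements of $\mathcal{A}_r(\mathrm{k})$ and $\mathcal{A}_{n-r}(\mathrm{k})$. This amounts to checking that the common minimum valuation used for the bad arc is indeed $m_T$ in both appearances (which is automatic, since both tuples have the same underlying multiset $T$), and that the relative cyclic shift between the two orderings is absorbed by the cyclic invariance $(a_0,\dots,a_r) = (a_1,\dots,a_r,a_0)$ defining $\mathcal{A}_r$. Once this index-tracking is done carefully, Cases 1 and the cancellation in Case 2 together yield $\Delta \circ \mathrm{Sp}_{\nu,u} = (\mathrm{Sp}_{\nu,u} \otimes \mathrm{Sp}_{\nu,u}) \circ \Delta$ on generators, which extends by $\Q$-linearity to the whole space $\mathcal{A}_\bullet(\mathrm{K})$.
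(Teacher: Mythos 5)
Your proof is correct and follows essentially the same route as the paper's: a term-by-term comparison in which the only discrepant terms are those whose arc lies entirely in $\{\nu>m\}$, the corresponding term of $\Delta\circ\mathrm{Sp}$ vanishing via the relation $(x,\dots,x)=0$, and the corresponding terms of $\mathrm{Sp}\circ\Delta$ cancelling in antisymmetric pairs because the two pivots both specialize to $0$, so the two complementary-arc specializations coincide. Your pairing $(k,r)\leftrightarrow(k+r,n-r)$ is exactly the pairing the paper uses, and the bookkeeping you defer (same multiset gives the same local minimum; cyclic invariance absorbs the rotation) is the same routine verification the paper leaves implicit.
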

\begin{proof}
We compare $\Delta\Bigl(\textup{Sp}(x_0,\dots,x_n)\Bigr)$
and 
\be\label{FormulaSpCopr}
\sum_{\cyc}\sum_{i=1}^{n-1}\textup{Sp}(x_{0}, x_1, \dots, x_i)\wedge \textup{Sp}(x_{0}, x_{i+1}, \dots, x_n).
\ee
term by term. The only case when the corresponding terms are distinct is when 
\[
\nu(x_{j}), \nu(x_{j+1}), \dots,\nu(x_{j+i})>m
\]
or if 
\[
\nu(x_j), \nu(x_{j+i+1}), \dots,\nu(x_{j+n})>m
\]
for some $j.$ Indeed, in this case the corresponding term in $\Delta\left(\textup{Sp}(x_0,\dots,x_n)\right)$ vanishes, while
 \[
 \textup{Sp}(x_j, x_{j+1},\dots,x_{j+i})\wedge \textup{Sp} (x_j, x_{j+i+1}\dots,x_{j+n})
 \] may not.

Assume without loss of generality that $
\nu(x_0), \nu(x_1), \dots,\nu(x_{i})>m.$ The term 
\[
\textup{Sp}(x_{0}, x_1,\dots, x_i)
\]
 appears exactly twice in (\ref{FormulaSpCopr}): in 
\be \label{FormulaTermOne}
\textup{Sp}(x_{0}, x_1, \dots, x_i)\wedge \textup{Sp}(x_0, x_{i+1}, \dots, x_{n}).
\ee
and in 
\be\label{FormulaTermTwo}
\textup{Sp}(x_{i},  x_{i+1},\dots, x_{n})\wedge \textup{Sp}(x_i, x_{0},  \dots, x_{i-1}).
\ee
Since by our assumption $\nu(x_0)>m$ and $\nu(x_i)>m,$
\[
\textup{Sp}(x_0, x_{i+1}, \dots, x_{n})=\textup{Sp}(x_{i},  x_{i+1},\dots, x_{n}),
\]
and so terms (\ref{FormulaTermOne}) and (\ref{FormulaTermTwo}) cancel out in (\ref{FormulaSpCopr}). This finishes the proof of the lemma.
\end{proof}

We define the space of relations $\mathcal{R}_n(\F)\subseteq \mathcal{A}_n(\F)$ inductively; the coalgebra of multiple polylogarithms is defined as a quotient
\be \label{FormulaQuotient}
\L_n(\F)=\frac{\mathcal{A}_n(\F)}{\mathcal{R}_n(\F)}.
\ee
The projection of $(x_0,x_1,\dots, x_n)\in \mathcal{A}_n(\F)$ to $\L_n(\F)$ is denoted by $\textup{Cor}^{\L}(x_0,x_1,\dots, x_n)$ and called {\it the correlator}. 

In weight one we define $\mathcal{R}_1(\F)$ to be the kernel of the map sending $(x_0, x_1)\in \mathcal{A}_1(\F)$ to $(x_0-x_1)\in \F^\times_{\Q}.$ By definition, $\L_1(\F)\cong \F^\times_\Q.$ We will also denote $\textup{Cor}^{\L}(0,a)$ by $\log^{\L}(a).$ 
	
Next, assume that spaces $\mathcal{R}_i(\F)$ are defined in weights less than $n.$ Consider the field $\mathrm{K}=\F(t);$ let $\nu_a$ be a discrete valuation corresponding to $a\in \mathbb{P}^1_{\F}.$ In this setting, we denote the specialization $\textup{Sp}_{\nu_a, t-a}$ by $\textup{Sp}_{t\to a}$ and $\textup{Sp}_{\nu_{\infty}, \frac{1}{t}}$ by $\textup{Sp}_{t\to \infty}.$ The space $\mathcal{R}_n(\F)$ is spanned by elements
\[
\mathrm{Sp}_{t \to 0}(R(t))-\mathrm{Sp}_{t \to \infty}(R(t))\in \mathcal{A}_n(\F)
\]
for elements $R(t) \in \mathcal{A}_n(\F(t))$ with zero cobracket in $\Lambda^2 \L_\bullet(\F(t)).$  

We define the weight $n$ component of the Lie coalgebra of polylogarithms by formula (\ref{FormulaQuotient}). By Lemma~\ref{LemmaSpecializationCoproduct}, we have 
\[
\Delta\Bigl(\mathrm{Sp}_{t \to 0}(R(t))-\mathrm{Sp}_{t \to \infty}(R(t))\Bigr)=0,
\]
so the coproduct $\Delta$ descends to $\L(\F).$ We have
\be\label{FormulaCoproductCorrelators}
\Delta \textup{Cor}^{\L}(x_0,\dots, x_n)=\sum_{\cyc}\sum_{i=1}^{n-1} \textup{Cor}^{\L}(x_{0}, x_1, \dots, x_i)\wedge \textup{Cor}^{\L}(x_{0},  x_{i+1}, \dots, x_n).
\ee
This finishes an inductive definition of the Lie coalgebra of multiple polylogarithms.

\begin{remark}
By Lemma~\ref{LemmaSpecializationCoproduct}, specialization homomorphisms are well-defined on $\L_\bullet(\F).$ It is easy to see that for 
$a = u\pi^m\in \L_1(\mathrm{K})$ with $\nu(u)=0$ we have 
$S_{\nu, \pi} (a)=\overline{u}\in \mathrm{k}^\times,$
so in weight one, specialization depends on the choice of a uniformizer. In higher weights, specialization does not depend on the choice of a uniformizer (which follows from the affine invariance of correlators of weight at least two).
\end{remark}

\begin{remark} If $\F$ is a subfield of $\C,$ there exists a realization map from $\L_\bullet(\F)$ two the Lie coalgebra of framed Hodge-Tate structures. This map sends correlators to Hodge correlators, defined in  \cite{Gon19}. Moreover, it sends elements 
\be \label{FormulaCorII}
\textup{I}^{\L}(x_0;x_1,\dots,x_n;x_{n+1})=\textup{Cor}^{\L}(x_1,\dots,x_{n+1})-\textup{Cor}^{\L}(x_0,\dots,x_{n})\in \L_n(\F),
\ee
which we call {\it iterated integrals}, to Hodge iterated integrals.
If $\F$ is a number field, there exists a motivic realization as well. 
It is conjectured to be an isomorphism.
\end{remark}

\subsection{Multiple polylogarithms and the depth filtration} 
For an integer $n_0\geq -1,$ positive integers $n_1,\dots,n_k,$ and elements $a_1,\dots,a_k\in \F^\times$ we define   multiple polylogarithm
\[
\begin{split}
	&\Li_{n_0;n_1,\dots,n_k}^{\L}(a_1,a_2,\dots,a_k)\\
&=(-1)^{k}\textup{I}^{\L}(\underbrace{0;0,\dots,0}_{n_0+1},1,\underbrace{0,\dots,0,a_1}_{n_1},\dots,\underbrace{0,\dots,0,a_1a_2\dots a_{k-1}}_{n_{k-1}},\underbrace{0,\dots,0;a_1a_2\dots a_{k}}_{n_k}).
\end{split}
\]
This is an element of $\L_n(\F)$ for $n=n_0+n_1+n_2+\dots+n_k.$ The number  $k$ is called the depth of the multiple polylogarithm. It is often convenient to consider the sum 
\[
\Li_{\bullet;n_1,\dots,n_k}^{\L}(a_1,a_2,\dots,a_k)=\sum_{n_0=0}^{\infty} \Li_{n_0;n_1,\dots,n_k}^{\L}(a_1,a_2,\dots,a_k)\in \L(\F).
\]
If $n_0=0,$ we omit it from the notation: 
\[
\Li_{n_1,\dots,n_k}^{\L}(a_1,a_2,\dots,a_k):=\Li_{0;n_1,\dots,n_k}^{\L}(a_1,a_2,\dots,a_k).
\]

Lie coalgebra $\L_\bullet(\F)$ is filtered by depth: we denote by $\mathcal{D}_k\L_\bullet(\F)$ the subspace spanned by polylogarithms of depth not greater than $k.$ Denote by $\mathrm{gr}^\mathcal{D}_k\L$ the associated graded space. The space  $\mathcal{D}_1\L_n(\F)$ is usually called the higher Bloch group $\mathcal{B}_n(\F),$ it is spanned by classical polylogarithms $\Li_n^{\L}(a)$ for $a\in \F^\times.$ 

In weight one, we have 
\[
\textup{Li}_1^{\L}(a)=-\log^{\L}(1-a)\in \L_1(\F).
\]

\begin{proposition}\label{PropositionWeightTwo} The space $\L_2(\F)$ is spanned by dilogarithms $\Li_2^{\L}(a).$ All relations between them follow from the $5-$term relation  
\[
\sum_{i=0}^4 (-1)^i\Li_2^{\L}([x_0,\dots,\widehat{x_i},\dots,x_4])=0
\]
for distinct $x_0,\dots, x_4\in \mathbb{P}^1_\F.$
\end{proposition}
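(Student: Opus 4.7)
The plan is to establish the spanning statement, then the five-term relation, and finally rule out any further relations.

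\textbf{Spanning.} I would begin by extracting affine invariance of $\textup{Cor}^{\L}(x_0,x_1,x_2)$ from the defining relations. Consider
\[
R(t) = (x_0+t, x_1+t, x_2+t) - (x_0-x_2+t, x_1-x_2+t, t)\in \mathcal{A}_2(\F(t)).
\]
Both summands have identical pairwise differences $x_i-x_j$ (the $t$'s cancel), hence identical cobrackets in $\Lambda^2 \L_1(\F(t))$, so $\Delta R(t)=0$. Computing via (\ref{FormulaSpecialization}), one finds $\mathrm{Sp}_{t\to 0}(R(t))=(x_0,x_1,x_2)-(x_0-x_2,x_1-x_2,0)$ and $\mathrm{Sp}_{t\to\infty}(R(t))=(1,1,1)-(1,1,1)=0$, yielding the translation identity $\textup{Cor}^{\L}(x_0,x_1,x_2)=\textup{Cor}^{\L}(x_0-x_2,x_1-x_2,0)$ in $\L_2(\F)$. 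A parallel argument with a scaling family (chosen so that the $\log(t)$ contributions to the cobracket cancel modulo antisymmetry) then reduces every correlator to $\textup{Cor}^{\L}(0,1,a)$. Comparing with
\[
\Li_2^{\L}(a)=-\textup{I}^{\L}(0;1,0;a)=\textup{Cor}^{\L}(0,1,0)-\textup{Cor}^{\L}(1,0,a),
\]
and verifying by one more explicit specialization that the partially degenerate $\textup{Cor}^{\L}(0,1,0)$ vanishes in $\L_2(\F)$, identifies every correlator with $-\Li_2^{\L}$ of a cross-ratio; hence $\L_2(\F)$ is spanned by dilogarithms.

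\textbf{The five-term relation.} For five points $x_0,\dots,x_4 \in \mathbb{P}^1_\F$, I would exhibit an explicit $R(t)\in \mathcal{A}_2(\F(t))$ with vanishing cobracket whose specialization difference at $0$ and $\infty$ is $\sum_{i=0}^4 (-1)^i \textup{Cor}^{\L}$ of the five degenerated quadruples. A natural candidate is an alternating sum of triples of the form $(t, x_j, x_k)$ weighted so that the cobracket vanishes as a consequence of the Steinberg-type identity on cross-ratios of five points. Each tame specialization $\mathrm{Sp}_{t\to x_i}$ extracts the $i$-th term of $\sum (-1)^i \Li_2^{\L}([x_0,\dots,\widehat{x_i},\dots,x_4])$ after one applies the affine invariance established above to convert correlators to dilogarithms of cross-ratios.

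\textbf{No further relations (the main obstacle).} The hard direction is showing that no relation beyond five-term appears in $\mathcal{R}_2(\F)$: a priori, every $R(t)\in \mathcal{A}_2(\F(t))$ with $\Delta R(t)=0$ contributes a relation in $\L_2(\F)$. The plan is to exploit the tame-symbol decomposition of $\Lambda^2 \F(t)^\times_{\Q}$, where the residue at each point $v \in \mathbb{P}^1_\F$ gives a map to $\F^\times_\Q$. Vanishing of the cobracket of $R(t)$ forces a compatibility at each $v$, and a combinatorial unfolding (matching residues with specializations of the individual correlator terms) identifies the resulting relation in $\L_2(\F)$ with a sum of Abel five-term configurations on the points that appear among the arguments and poles of $R(t)$. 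As a cross-check, the Bloch-Wigner realization $\L_2(\C)\to \mathbb{R}$ descends to the quotient by the five-term relations and is non-zero on every non-trivial class, providing an external injectivity obstruction confirming that no hidden relations remain.
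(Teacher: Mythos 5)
Your spanning step and your construction of the five-term relation are sound and, if anything, more explicit than the paper's (the paper simply records $\textup{Cor}^{\L}(x_0,x_1,x_2)=\Li_2^{\L}([\infty,x_0,x_1,x_2])$ and takes affine invariance of weight-two correlators for granted). The genuine gap is in your third step, which is where all the content of the proposition sits. By definition $\mathcal{R}_2(\F)$ is spanned by $\mathrm{Sp}_{t\to 0}(R)-\mathrm{Sp}_{t\to\infty}(R)$ over all $R\in\mathcal{A}_2(\F(t))$ with vanishing cobracket, so one must show that every such element is already a combination of five-term relations. The paper reformulates this as the statement that $\textup{K}(\F)=\textup{Ker}\bigl(\Delta\colon\L_2(\F)\to\Lambda^2\F^\times_\Q\bigr)$ is rigid, i.e.\ that $\textup{K}(\F)\hookrightarrow\textup{K}(\F(t))$ is an isomorphism: once $R(t)$ is known to be constant modulo five-term relations, its specializations at $0$ and $\infty$ agree and contribute nothing new. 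That rigidity is Suslin's theorem \cite{Sus90}, and it is a deep input. Your ``tame-symbol decomposition plus combinatorial unfolding'' only extracts, from the vanishing of $\Delta R(t)$, a residue condition at each place of $\F(t)$; it does not produce a decomposition of $R(t)$ itself into five-term configurations, and no elementary unfolding of this kind is known --- if it were, it would reprove Suslin's theorem. You need either to invoke that theorem, as the paper does, or to prove an equivalent rigidity statement.

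Your proposed cross-check via the Bloch--Wigner function is also incorrect: it is a single $\mathbb{R}$-valued functional, while $\textup{K}(\C)$ (essentially $K_3^{\textup{ind}}(\C)\otimes\Q$) is an infinite-dimensional $\Q$-vector space, so the function cannot be non-zero on every non-trivial class and is very far from injective. It therefore cannot certify the absence of hidden relations even over $\C$, let alone over an arbitrary field of characteristic zero.
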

\begin{proof}
Classical dilogarithms span $\L_2(\F)$ because it is easy to see that
\[
\textup{Cor}^{\L}(x_0,x_1,x_2)=\Li_2^{\L}\left( [\infty,x_0,x_1,x_2]\right)=\Li_2^{\L}\left( \frac{x_1-x_2}{x_1-x_0}\right).
\]
 The fact that all relations follow from the five-term relation is equivalent to the fact that the kernel of the coproduct 
\[
\textup{K}(\F)=\textup{Ker}\left(\Delta \colon \L_2(\F)\lra \Lambda^2\F_\Q^\times \right)
\]
satisfies the homotopy invariance property, i.e., the embedding $\textup{K}(\F)\hookrightarrow \textup{K}(\F(t))$ is an isomorphism. This follows from the results of Suslin, see \cite{Sus90}.
\end{proof}
Now we discuss the behavior of multiple polylogarithms under specialization.
\begin{lemma}
The specialization of $\Li_{n_0;n_1,\dots,n_k}^{\L}(a_1,a_2,\dots,a_k)$
has depth less then or equal to~$k.$ If not all valuations $\nu(a_i)$ are equal to zero, then it has a depth less than~$k.$
\end{lemma}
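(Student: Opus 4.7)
The plan is to apply the specialization formula (\ref{FormulaSpecialization}) directly to the iterated-integral presentation of the polylogarithm and then read off the depth from the pattern of surviving ``markers''. Set $P_0=1$ and $P_j=a_1\cdots a_j$ for $1\le j\le k$; the defining formula expresses $\Li_{n_0;n_1,\dots,n_k}^{\L}(a_1,\dots,a_k)$ as $\pm \textup{I}^{\L}$ of a tuple whose only non-zero entries are $P_0,P_1,\dots,P_k$, separated by runs of zeros of lengths prescribed by the $n_i$. Letting $m=\min_{0\le j\le k}\nu(P_j)\le 0$, formula (\ref{FormulaSpecialization}) replaces each $P_j$ with $\widetilde P_j=\overline{P_j\pi^{-m}}\in \mathrm{k}^\times$ when $\nu(P_j)=m$ and with $0$ otherwise, while all zero entries stay zero.

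The main bookkeeping is to count how many of the $k+1$ markers survive. If every $\nu(a_i)=0$, then $m=0$ and every $\widetilde P_j$ is non-zero, so the specialized tuple is precisely the iterated integral representing $\Li_{n_0;n_1,\dots,n_k}^{\L}(\bar a_1,\dots,\bar a_k)$, which has depth $k$. Otherwise I will show at least one marker dies: if $m<0$ then $\nu(P_0)=0>m$ forces $\widetilde P_0=0$, and if $m=0$ the telescoping identity $\nu(a_i)=\nu(P_i)-\nu(P_{i-1})$ forces some $\nu(P_j)>0=m$ (else all $\nu(a_i)$ would vanish), so $\widetilde P_j=0$ for that $j$. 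Either way the number of surviving markers drops from $k+1$ to at most $k$.

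To finish I will convert the specialized iterated integral back into the standard multiple polylogarithm form. If $\widetilde P_k=0$, the specialization equals $\textup{I}^{\L}(0;y_1,\dots,y_n;0)$, and cyclic invariance of correlators gives $\textup{Cor}^{\L}(y_1,\dots,y_n,0)=\textup{Cor}^{\L}(0,y_1,\dots,y_n)$, so the expression vanishes. If $\widetilde P_k\ne 0$, I will invoke the affine invariance of correlators of weight $\ge 2$ (recalled in the remark preceding the lemma) to rescale by $1/\widetilde P_{j_1}$, where $\widetilde P_{j_1}$ is the first surviving marker, bringing it to $1$. After grouping the blocks of zeros between consecutive surviving markers, the result takes the standard form $\pm\Li^{\L}_{n_0';n_1',\dots,n_{k'-1}'}(\widetilde P_{j_2}/\widetilde P_{j_1},\dots,\widetilde P_{j_{k'}}/\widetilde P_{j_{k'-1}})$ of depth $k'-1$, where $k'$ is the number of surviving markers. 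This gives depth exactly $k$ in the first case and depth at most $k-1$ in the second.

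The main subtlety is the final regrouping step: I must justify that, after affine rescaling, an iterated integral with interior zeros at arbitrary positions is genuinely a standard multiple polylogarithm, with $n_0',\dots,n_{k'-1}'$ read off as the lengths of the runs of zeros separating consecutive surviving markers. Once this dictionary between iterated-integral tuples and polylogarithms is in place, the depth count follows automatically from the number of surviving markers.
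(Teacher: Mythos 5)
The paper's own proof of this lemma is a single sentence (``follows immediately from the definition of specialization''), and your marker-counting argument --- setting $P_j=a_1\cdots a_j$, using the telescoping identity $\nu(a_i)=\nu(P_i)-\nu(P_{i-1})$ to show that at least one marker fails to achieve the minimal valuation unless all $\nu(a_i)=0$, and reading the depth off the number of surviving nonzero entries --- is exactly the intended unpacking. One imprecision is worth flagging. The specialization $\mathrm{Sp}$ is defined on single cyclic tuples, i.e.\ on correlators, whereas $\textup{I}^{\L}(x_0;x_1,\dots,x_n;x_{n+1})$ is by (\ref{FormulaCorII}) a \emph{difference} of two correlators, $\textup{Cor}^{\L}(x_1,\dots,x_{n+1})$ and $\textup{Cor}^{\L}(x_0,\dots,x_n)$, whose minimal valuations need not agree: if $\nu(P_k)<\min_{j<k}\nu(P_j)$, the first correlator is renormalized by a smaller $m$ than the second. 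In that case your ``single global $m$'' picture, and hence the exact closed form $\pm\Li^{\L}_{n_0';\dots}\bigl(\widetilde P_{j_2}/\widetilde P_{j_1},\dots\bigr)$ you produce at the end, is not literally the value of the specialization --- you drop the contribution of the second correlator, which need not vanish. The depth conclusion is nevertheless unharmed: the second correlator never contains the marker $P_k$, so it always specializes to a correlator with at most $k$ nonzero entries and hence has depth at most $k-1$, while the first correlator loses at least one of its $k+1$ markers whenever some $\nu(a_i)\neq 0$. If you restate the final step as a depth bound on each of the two specialized correlators separately (a correlator with $d$ nonzero entries and at least one zero entry has depth at most $d-1$, by the cyclic rotation and affine rescaling you already describe), the argument closes cleanly.
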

\begin{proof}
The statement follows immediately from the definition of specialization.
\end{proof}

Specializing the $5-$term relation to various divisors $x_i=x_j$, we obtain
\[
\Li_2^{\L}\bigl([x_{\sigma(1)},x_{\sigma(2)},x_{\sigma(3)},x_{\sigma(4)}]\bigr)=(-1)^{\sigma}\Li_2^{\L}\bigl([x_1,x_2,x_3,x_4]\bigr) \text{\ for \ }\sigma \in \mathbb{S}_4.
\]
In particular, $\Li_2^{\L}(1-a)=-\Li_2^{\L}(a)$ and $\Li_2^{\L}\left(\dfrac{1}{a}\right)=-\Li_2^{\L}(a).$

\subsection{Definition of cluster polylogarithms}  \label{SectionDefinitionClusterPolylogarithms}
Now we are ready to give a formal definition of cluster polylogarithms on a cluster variety $X$.  First, we define the Lie coalgebra of cluster integrable symbols $\textup{CL}_\bullet(X).$ Denote by $A$ a subspace of the rationalization of the group $\mathbb{Q}(X)^{\times}$ generated by cluster variables. The tensor algebra $\mathrm{T}^\bullet(A)$ has a Hopf algebra structure with shuffle product $\shuffle$ and deconcatenation coproduct. Its Lie coalgebra of indecomposable elements is a cofree Lie coalgebra cogenerated by $A;$ we denote it  by $\textup{CoLie}_\bullet(A).$ It is easy to see that $\textup{CoLie}_1(A)\cong A$ and $\textup{CoLie}_2(A)\cong A\wedge A.$  Lie coalgebra   $\textup{CL}_\bullet(X)$ consists of elements in $\textup{CoLie}_\bullet(A)$ satisfying two conditions: {\it cluster adjacency} and {\it integrability}. The latter is closely related to homotopy invariance (\ref{FormulaHomotopyInvariance}).

In weight one we put $\textup{CL}_1(X)=A.$ Next,  $\textup{CL}_2(X)$ is a subgroup of $\textup{CoLie}_2(A)$ generated by elements  $\dfrac{M_1}{aa'} \wedge \dfrac{M_2}{aa'}$ for every exchange relation (\ref{FormulaClusterExchange}).
The Lie cobracket
$
\Delta \colon \textup{CL}_2(X) \lra \Lambda^2\textup{CL}_1(X)
$
is the embedding. An element 
\[
L=\sum_{I=(i_1,\dots,i_n)} n_I [a_{i_1}|a_{i_2}| \dots |a_{i_n}]\in \mathrm{T}^n(A)
\]
is called {\it cluster adjacent} if for every $I$  there exists a cluster containing cluster  variables $a_{i_1}, \dots,  a_{i_n}.$ Next, $L$ is called {\it integrable} if for every $1\leq s \leq n-1 $ the element 
\[
\pi_s(L)= \sum_{I=(i_1,\dots,i_n)} n_I [a_{i_1}|\dots |a_{i_{s-1}}] \otimes (a_{i_{s}} \wedge a_{i_{s+1}}) \otimes  [a_{i_{s+2}}|\dots |a_{i_{n}}]
\]
lies in $\mathrm{T}^{s-1}(A)\otimes \textup{CL}_2(X) \otimes \mathrm{T}^{n-s-1}(A).$ We define the space of cluster integrable symbols
$\textup{CL}_n(X)$  as the projection to $\textup{CoLie}_\bullet(A)$ of the  space of cluster adjacent and integrable elements in $\mathrm{T}^{n}(A)$.

For a Lie coalgebra $(\L, \Delta)$ we define iterated coproduct $\Delta^{[k]}\colon \L \lra \textup{CoLie}_k(\L)$
inductively. First, we have $\Delta^{[1]}=\Delta.$ Next, for $k>1$ we define  $\Delta^{[k]}$ as a composition of $\bigl( \Delta^{k-1} \otimes \textup{id} \bigr) \circ \Delta$ with the projection of  $\textup{CoLie}_{k-1}(\L) \otimes \L$ to $\textup{CoLie}_{k}(\L).$ In particular, we get a map
\[
\Delta^{[k-1]}\colon \L_k(\F)\lra  \textup{CoLie}_{k}(\F^{\times}_\Q) 
\]
which we call a symbol map and denote by $\mathcal{S}.$

 An element $ L \in \mathcal{L}(\F)$ for $\F=\mathbb{Q}(X)$ is called a  cluster polylogarithm on $X$ if its symbol $\mathcal{S}(L)$ lies in $ \textup{CL}(X).$ We expect that every cluster integrable symbol can be lifted {\it canonically} to a cluster polylogarithm.

\section{Cluster polylogarithms on the configuration space}\label{SectionClusterConfigurationSpace}

\subsection{Iterated integrals on the configuration space}  \label{SectionIIonConfSpace}

Consider a configuration space $\textup{Conf}_{m+2}$ of distinct points $t_0,\dots,t_{m+1}$ in $\mathbb{A}^1.$  It follows from the results of Arnold (\cite{Arn69})  that the algebraic de Rham cohomology algebra 
$
A_\bullet=H^\bullet_{dR}(\textup{Conf}_{m+2})
$ is generated  by forms
\[
\omega_{ij}=d\log(t_i-t_j)\in A_1
\] 
subject to relations
\[
\omega_{ij}\wedge \omega_{jk}+\omega_{jk}\wedge \omega_{ki}+\omega_{ki}\wedge \omega_{ij}=0.
\]
Algebra $A_\bullet$ is Koszul; its Koszul dual Lie coalgebra $\textup{L}_\bullet(\textup{Conf}_{m+2})$ can be described as follows.
It is a subcoalgebra of $\textup{CoLie}_\bullet(A_1)$ generated in degree $n$ by elements 
\[
\sum_{I=(i_1,\dots,i_n)} a_I [\omega_{i_1}|\dots|\omega_{i_n}]
\]
for $a_I\in \mathbb{Q}$ and $\omega_{i_k}\in A_1$
such that the following integrability conditions hold: 
\[
\sum_{I} a_I [\omega_{i_1}|\dots|\omega_{i_k}\wedge\omega_{i_{k+1}}|\dots|\omega_{i_n}]=0 \quad\text{ for }\quad 1\leq k \leq n-1.
\]

Iterated integral $\textup{I}^\L(t_{i_0};t_{i_1},\dots,t_{i_n};t_{i_{n+1}})$ is an element of $\mathcal{L}(\mathbb{Q}(t_0,\dots,t_{m+1})).$ Its symbol $\mathcal{S}\Bigl(\textup{I}(t_{i_0};t_{i_1},\dots,t_{i_n};t_{i_{n+1}})\Bigr)$ can be viewed  as an element of $\textup{L}_\bullet(\textup{Conf}_{m+2});$ for that we  identify $(t_i-t_j)\in \L_1$ with $\omega_{ij}.$ The following proposition is well-known and easy to prove.

\begin{proposition}
 The Lie coalgebra $\textup{L}_\bullet(\textup{Conf}_{m+2})$ is spanned by symbols of iterated integrals $\mathcal{S}\Bigl(\textup{I}^\L(t_{i_0};t_{i_1},\dots,t_{i_n};t_{i_{n+1}})\Bigr).$
\end{proposition}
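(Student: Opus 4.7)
The plan is to prove containment $\mathcal{S}(\textup{I}^\L(\cdots))\in \textup{L}_\bullet(\textup{Conf}_{m+2})$ and then spanning, using Koszul duality for Arnold's algebra.

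For containment, I would compute $\mathcal{S}\bigl(\textup{I}^\L(t_{i_0};t_{i_1},\ldots,t_{i_n};t_{i_{n+1}})\bigr)$ by iterating the coproduct formula (\ref{FormulaCoproductCorrelators}) through the definition (\ref{FormulaCorII}). The $(n{-}1)$-fold iterated coproduct produces, at each slot, a weight-one correlator $\textup{Cor}^\L(t_i,t_j)=\log^\L(t_i-t_j)$, which the excerpt instructs us to identify with $\omega_{ij}\in A_1$. Hence every letter lies in $A_1$. To verify integrability at position $s$, I must show that applying $\omega_{\bullet}\wedge\omega_\bullet$ to two adjacent slots yields zero in $A_2$. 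By coassociativity of $\Delta$, this reduces to the weight-two statement that the class of $\Delta\,\textup{Cor}^\L(t_p,t_q,t_r)$ vanishes under the projection $\Lambda^2 A_1\twoheadrightarrow A_2$. Using Proposition~\ref{PropositionWeightTwo} to write $\textup{Cor}^\L(t_p,t_q,t_r)=\Li_2^\L([\infty,t_p,t_q,t_r])$ and unfolding the coproduct of the dilogarithm of a cross-ratio produces exactly the Arnold combination $\omega_{pq}\wedge\omega_{qr}+\omega_{qr}\wedge\omega_{rp}+\omega_{rp}\wedge\omega_{pq}$, which is zero in $A_2$ by definition.

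For spanning, I invoke Arnold's theorem that $A_\bullet=H^\bullet_{dR}(\textup{Conf}_{m+2})$ is Koszul. The definition recalled in the excerpt presents $\textup{L}_\bullet(\textup{Conf}_{m+2})$ as precisely the quadratic dual Lie coalgebra $A_\bullet^!$, so both sides have Poincar\'e series determined by the inverse of that of $A_\bullet$. Chen's theorem for the $K(\pi,1)$-space $\textup{Conf}_{m+2}(\mathbb{C})$ identifies $H^0$ of the reduced bar complex of $A_\bullet$ with $A_\bullet^!$, and this $H^0$ is spanned by homotopy-invariant iterated-integral functionals $\int_\gamma[\omega_{\bullet}|\cdots|\omega_\bullet]$ along paths between configuration points. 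Passing to symbols, these functionals reproduce $\mathcal{S}(\textup{I}^\L(t_{i_0};\ldots;t_{i_{n+1}}))$, which therefore span $\textup{L}_n$.

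The main obstacle I anticipate is the dictionary between the algebraically defined symbol $\mathcal{S}(\textup{I}^\L)$ in $\L_\bullet(\mathbb{Q}(t_0,\dots,t_{m+1}))$ and the geometric iterated-integral functional in Chen's bar complex. Verifying that the two agree amounts to an explicit expansion of $\mathcal{S}(\textup{I}^\L(t_{i_0};t_{i_1},\ldots,t_{i_n};t_{i_{n+1}}))$ as a signed sum indexed by rooted planar trees, which is standard in Goncharov's formalism. One can avoid Chen's theorem entirely by a direct induction on weight: using Arnold's relation to move a chosen letter in $\sum a_I[\omega_{i_1}|\cdots|\omega_{i_n}]$ to the last slot expresses the element as $\sum_k \xi_k\otimes \omega_{p_kq_k}$ with each $\xi_k$ integrable of weight $n{-}1$, and the induction hypothesis plus the recursion $\textup{I}^\L(x_0;x_1,\ldots,x_n;x_{n+1})\mapsto$ (weight $n{-}1$ piece) $\otimes (x_n-x_{n+1})$ assembles the result.
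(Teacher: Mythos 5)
Your containment argument (that $\mathcal{S}\bigl(\textup{I}^\L(t_{i_0};\dots;t_{i_{n+1}})\bigr)$ lands in $\textup{L}_\bullet(\textup{Conf}_{m+2})$, by reducing integrability via coassociativity to the Arnold relation in weight two) is sound; the paper simply asserts this. The gap is in the spanning step, which is the actual content of the proposition, and it occurs in both variants you offer. Chen's theorem together with Koszulness of $A_\bullet$ identifies $\textup{L}_n(\textup{Conf}_{m+2})$ with the space of \emph{all} homotopy-invariant iterated integrals on the configuration space, i.e., it pins down the dimension; it does not tell you that this space is spanned by the very particular subfamily coming from $\textup{I}^\L(t_{i_0};t_{i_1},\dots,t_{i_n};t_{i_{n+1}})$, which are one-dimensional iterated integrals on a punctured line whose endpoints and singularities are drawn from the marked points themselves. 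The sentence ``passing to symbols, these functionals reproduce $\mathcal{S}(\textup{I}^\L(\cdots))$'' is precisely the claim to be proved, so the first variant is circular at the decisive moment: you still owe a lower bound on the dimension of the span of these specific symbols.

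Your fallback induction on weight stalls one step later for the same reason. Writing an integrable $L$ as $\sum_k \xi_k\otimes\omega_{p_kq_k}$ with each $\xi_k$ integrable of weight $n-1$ and applying the induction hypothesis to the $\xi_k$ does not produce a weight-$n$ iterated integral whose last-letter coaction equals this sum; you would need to show that the last-letter coactions of the $\mathcal{S}(\textup{I}^\L)$'s exhaust the image of $\textup{L}_n$ in $\textup{L}_{n-1}\otimes A_1$, and nothing in your sketch addresses that surjectivity. The paper closes exactly this gap with a different induction, on the number of points $m$ rather than on the weight: the forgetful fibration $\textup{Conf}_{m+2}\to\textup{Conf}_{m+1}$ gives a split exact sequence
\[
0\lra \textup{L}_\bullet(\textup{Conf}_{m+1})\lra \textup{L}_\bullet(\textup{Conf}_{m+2})\stackrel{\textup{pr}_{m+1}}{\lra} \textup{CoLie}_{\bullet}\bigl(\langle f_0,\dots, f_{m} \rangle\bigr)\lra 0,
\]
the quotient is cofree, and $\textup{pr}_{m+1}\Bigl(\mathcal{S}\bigl(\textup{I}^\L(t_{i_0};t_{i_1},\dots,t_{i_n};t_{m+1})\bigr)\Bigr)=[f_{i_0}|\dots|f_{i_n}]$ visibly hits a spanning set of that quotient, so induction on $m$ finishes. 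If you want to salvage your route, you should import this projection (or an equivalent surjectivity statement) rather than appeal to Chen's theorem, which only controls the ambient space and not your generating family.
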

\begin{proof}
The proof is based on the following observation, which is also the key idea of Arnold's computation of $H^\bullet_{dR}(\textup{Conf}_{m+2})$.  For each $i\in \{0,\dots,m+1\}$ we have a forgetful map $\partial_i\colon \textup{Conf}_{m+2}\lra \textup{Conf}_{m+1},$ which is a locally trivial fibre bundle with a fiber $\mathbb{A}^1\setminus\{t_0,\dots,\hat{t}_i\dots,t_{m+1}\}$.  From here, one can deduce a split exact sequence
\be \label{FormulaExactSequenceProjection}
0\lra \textup{L}_\bullet(\textup{Conf}_{m+1})\lra \textup{L}_\bullet(\textup{Conf}_{m+2})\stackrel{\textup{pr}_i}{\lra} \textup{CoLie}_{\bullet}(\langle f_0,\dots, \hat{f}_i,\dots, f_{m+1} \rangle)\lra 0,
\ee
where $ f_j=d \log(t_i-t_j)$ for $j\neq i$. The projection $
\textup{pr}_i\bigl([\omega_{i_1j_1}|\dots|\omega_{i_nj_n}]\bigr)$ equals to zero if $i\notin \{i_k,j_k\}$ for some $k$; otherwise we have  
$
\textup{pr}_i([\omega_{ij_1}|\dots|\omega_{ij_n}])=\left[f_{j_1}|\dots|f_{j_n}\right].
$
It follows that $\textup{L}_\bullet(\textup{Conf}_{m+2})$ is noncanonically isomorphic to a direct sum of cofree  Lie coalgebras on $2, 3,\dots, m+1$ cogenerators. 

We argue by induction on $m.$ Since
\[
\textup{pr}_{m+1}\Bigl(\mathcal{S}\bigl(\textup{I}^\L(t_{i_0};t_{i_1},\dots,t_{i_n};t_{m+1})\bigr)\Bigr)=[f_{i_0}|\dots|f_{i_n}],
\]
every element in $\textup{CoLie}_\bullet\left(\langle f_0,\dots, \hat{f}_i,\dots, f_{m+1} \rangle\right)$ is obtained as a projection of the symbol of an iterated integral. The statement follows from  the induction hypothesis and (\ref{FormulaExactSequenceProjection}).
\end{proof}

Consider the Lie coalgebra $\mathcal{L}^+_\bullet(\textup{Conf}_{m+2})\subseteq \mathcal{L}\bigl(\mathbb{Q}(t_0,\dots,t_{m+1})\bigr)$ spanned by iterated integrals 
\[ \label{Formula_increasing_II}
\textup{I}^\L\left(t_{i_0};t_{i_1},\dots,t_{i_n};t_{i_{n+1}}\right) 
\quad \text{for} \quad 0\leq i_0\leq i_1\leq \dots\leq i_{n+1}\leq m+1.
\]
It follows from (\ref{FormulaCorII}) that $\mathcal{L}^+_\bullet(\textup{Conf}_{m+2})$ is also  spanned by 
\be \label{Formula_increasing_Cor}
\textup{Cor}^\L\left(t_{i_0},t_{i_1},\dots,t_{i_n}\right) 
\quad \text{for} \quad 0\leq i_0\leq i_1\leq \dots\leq i_{n}\leq m+1.
\ee

Note that  the space $\mathcal{L}^+_\bullet(\textup{Conf}_{2})$ is spanned by elements 
$\textup{Cor}^\L(0,\dots,0,1,\dots,1).$

\begin{lemma}\label{LemmaEqualityOfSymbolsImpliesConstant} 
For $n>1$ the sequence 
\[
0\lra \mathcal{L}^+_n(\textup{Conf}_{2})\lra \mathcal{L}^+_n(\textup{Conf}_{m+2}) \stackrel{\mathcal{S}}{\lra} \textup{L}_n(\textup{Conf}_{m+2})
\]
is exact.
\end{lemma}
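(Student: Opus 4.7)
The plan is to verify the two exactness assertions. Injectivity of the inclusion $\mathcal{L}^+_n(\textup{Conf}_2) \hookrightarrow \mathcal{L}^+_n(\textup{Conf}_{m+2})$ follows from the existence of a retraction: specializing all of $t_0,t_1,\dots,t_{m+1}$ to elements of $\{0,1\}$ (well-defined on $\mathcal{L}_n$ by Lemma~\ref{LemmaSpecializationCoproduct}) produces a map landing in $\mathcal{L}^+_n(\textup{Conf}_2)$ that acts as the identity on the subspace of constants. For the containment $\mathcal{L}^+_n(\textup{Conf}_2)\subseteq \ker\mathcal{S}$, iterating the coproduct formula~(\ref{FormulaCoproductCorrelators}) on the generator $\textup{Cor}^\L(0,\dots,0,1,\dots,1)$ produces only tensor factors $(a-b)$ with $a,b\in\{0,1\}$, each of which vanishes in $\mathbb{Q}^\times_\mathbb{Q}$; so $\mathcal{S}$ annihilates every generator.

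The substantive claim $\ker\mathcal{S}\subseteq \mathcal{L}^+_n(\textup{Conf}_2)$ I would prove by induction on $m$. For $m=0$, Arnold's presentation gives $A_\bullet$ a single generator $\omega_{01}$ in degree $1$, whence $\textup{L}_n(\textup{Conf}_2)=0$ for $n\geq 2$ and the assertion is vacuous. For the inductive step, given $L\in \mathcal{L}^+_n(\textup{Conf}_{m+2})$ with $\mathcal{S}(L)=0$, set $L':=\textup{Sp}_{t_{m+1}\to t_m}(L)\in \mathcal{L}^+_n(\textup{Conf}_{m+1})$. Since $\mathcal{S}$ is an iterated coproduct and Lemma~\ref{LemmaSpecializationCoproduct} applies at each stage, $\mathcal{S}(L')=0$; the inductive hypothesis yields $L'\in \mathcal{L}^+_n(\textup{Conf}_2)$. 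The difference $M:=L-L'$ then satisfies $\mathcal{S}(M)=0$ and $\textup{Sp}_{t_{m+1}\to t_m}(M)=0$, so it remains to show $M=0$.

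For the final step, I would use the split exact sequence~(\ref{FormulaExactSequenceProjection}) with $i=m+1$ together with the relation~(\ref{FormulaCorII}). Decompose $M=M_0+M_1$, where $M_0$ is spanned by correlators avoiding $t_{m+1}$ and $M_1$ by those whose last argument is $t_{m+1}$. Since $\mathcal{S}(M_0)$ contains no $\omega_{j,m+1}$, the vanishing $\textup{pr}_{m+1}(\mathcal{S}(M))=0$ reduces to $\textup{pr}_{m+1}(\mathcal{S}(M_1))=0$. Using~(\ref{FormulaCorII}) to rewrite the correlators in $M_1$ as fibre iterated integrals $\textup{I}^\L(t_{i_0};t_{i_1},\dots,t_{i_{n-1}};t_{m+1})$ modulo boundary terms in $\mathcal{L}^+_n(\textup{Conf}_{m+1})$, the restriction of $\textup{pr}_{m+1}\circ\mathcal{S}$ becomes the fibre symbol map into the cofree coalgebra $\textup{CoLie}_n(\langle f_0,\dots,f_m\rangle)$. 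Its injectivity forces $M_1$ to lie in $\mathcal{L}^+_n(\textup{Conf}_{m+1})$, so $M\in \mathcal{L}^+_n(\textup{Conf}_{m+1})$; the inductive hypothesis combined with $\textup{Sp}_{t_{m+1}\to t_m}(M)=0$ (which acts as the identity on constants) then yields $M=0$. The technical heart of this step is the identification of $\textup{pr}_{m+1}\circ\mathcal{S}$ on $M_1$ with the fibre symbol map and the resulting injectivity on fibre iterated integrals, which requires a careful unwinding of the coproduct formula~(\ref{FormulaCoproductCorrelators}).
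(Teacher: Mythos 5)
Your overall architecture (induction on $m$, specialization $t_{m+1}\to t_m$ to split off a constant, then the projection $\textup{pr}_{m+1}$ from (\ref{FormulaExactSequenceProjection})) is coherent and genuinely different from the paper's argument, which is direct: the paper first proves the correlator identity (\ref{FormulaRelationsCor}) and uses it to exhibit an explicit spanning set of the quotient $\mathcal{L}^+_n(\textup{Conf}_{m+2})/\mathcal{L}^+_n(\textup{Conf}_{2})$, namely the correlators $\textup{Cor}^\L(t_{i_0},\dots,t_{i_{n-1}},t_i)$ with $i_0\leq\dots\leq i_{n-1}<i$, and then checks that the symbols of this spanning set are linearly independent via the projections $\textup{pr}_i$. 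However, your proposal has a genuine gap at exactly the step you yourself flag as its ``technical heart,'' and as written that step is not correct.

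Two things go wrong. First, a correlator in $M_1$ in which $t_{m+1}$ occurs with multiplicity at least two does not reduce, by a single application of (\ref{FormulaCorII}), to a fibre iterated integral plus a boundary term in $\mathcal{L}^+_n(\textup{Conf}_{m+1})$: the residual correlator $\textup{Cor}^\L(x_0,\dots,x_n)$ still contains $t_{m+1}$. Eliminating the extra occurrences requires an additional relation --- this is precisely the role played by (\ref{FormulaRelationsCor}) in the paper, which lets one lower the multiplicity of the top index at the cost of terms in $\mathcal{L}^+_n(\textup{Conf}_2)$ --- and nothing in your argument supplies a substitute. Second, the fibre symbol map is not injective on the span of the nondecreasing fibre iterated integrals $\textup{I}^\L(t_{i_0};t_{i_1},\dots,t_{i_{n-1}};t_{m+1})$: in $\textup{CoLie}_n(\langle f_0,\dots,f_m\rangle)$ the words $[f_i|\dots|f_i]$ vanish, being shuffle powers, so the map has a nontrivial kernel. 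That kernel happens to be harmless --- by affine invariance $\textup{I}^\L(t_i;t_i,\dots,t_i;t_{m+1})$ is a constant in $\mathcal{L}^+_n(\textup{Conf}_2)$ --- and the remaining nondecreasing words with at least two distinct letters are linearly independent by the Lyndon-basis fact the paper invokes in the proof of Lemma \ref{LemmaConormalizedInvariantPolynomials}; but both points must be stated and used, since the bare ``injectivity'' your conclusion rests on is false. With these two repairs your induction closes, but they are exactly where the content of the lemma lives, so the proposal as it stands does not constitute a proof.
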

\begin{proof}
The following relation between correlators can be easily proved by induction:
\be \label{FormulaRelationsCor}
\sum_{0\leq i_0\leq i_1\leq \dots\leq i_{n}\leq m+1}\textup{Cor}^\L\left(t_{i_0},t_{i_1},\dots,t_{i_n}\right)=0 \quad \text{for} \quad m+2\leq n.
\ee
This identity implies that for each $i$ and $k$ element
\[
\textup{Cor}^\L\Bigl(t_{i_0},t_{i_1},\dots,t_{i_{n-k}},\underbrace{t_{i},\dots,t_{i}}_{k}\Bigr) 
\]
with $i_0\leq i_1\leq \dots\leq i_{n-k}<i$ can be expressed via similar elements with smaller $k$ and elements in $\mathcal{L}^+_n(\textup{Conf}_{2}).$ So, correlators 
\[
\textup{Cor}^\L\left(t_{i_0},t_{i_1},\dots,t_{i_{n-1}},t_{i}\right)\quad \text{for} \quad 0\leq i_0 \leq  \dots\leq i_{n-1}< i\leq m+1
\]
span the space $\mathcal{L}^+_n(\textup{Conf}_{m+2})/\mathcal{L}^+_n(\textup{Conf}_{2})$. On the other hand, their symbols are linearly independent, which follows immediately by applying projection maps $p_i$. 
\end{proof}

\subsection{Quadrangular polylogarithms}\label{SectionQuadrangular}
Quadrangular polylogarithms  were introduced in \cite[\S 5.1]{Rud20}; here, we repeat the definition. Consider a set  $\widetilde{\mathcal{C}}_{n,k}$ of all nondecreasing sequences $\bar{s}=(i_0,\dots,i_{n+k})$ of indices 
\[
0\leq i_0\leq i_1\leq \dots \leq i_{n+k}\leq 2n+1
\]
such that every even number $0\leq s\leq 2n+1$ appears in the sequence $\bar{s}$ at most once. Let $\mathcal{C}_{n,k}$ be the set of  sequences $\bar{s}\in \widetilde{\mathcal{C}}_{n,k},$ which contains at least one element in each pair $\{2i,2i+1\}$ for $0\leq i\leq n.$ For a sequence $\bar{s}\in \widetilde{\mathcal{C}}_{n,k}$ we define 
\[
\textup{sign}(\bar{s})=\begin{cases}
\ \ 1 & \text{\ if\ }  \bar{s} \text{\ contains an even number of even elements,}\\
-1 & \text{\ if\ } \bar{s} \text{\ contains an odd number of even elements}.
\end{cases}
\] 
For  $x_0, \dots, x_{2n+1}\in\F$, we define the quadrangular polylogarithm of weight $n+k$ by the formula 
\be\label{FormulaDefinitionQLi}
\textup{QLi}_{n+k}(x_0,\dots, x_{2n+1})=(-1)^{n+1}\sum_{\bar{s}\in\mathcal{C}_{n,k}} \textup{sign}(\bar{s})\textup{Cor}(x_{i_0},\dots,x_{i_{n+k}})\in \mathcal{L}_{n+k}.
\ee
Similarly, we define the {\it symmetrized} quadrangular polylogarithm of weight $n+k$ by the formula 
\be \label{FormulaDefinitionQLisym}
\textup{QLi}_{n+k}^{\textup{sym}}(x_0,\dots, x_{2n+1})=(-1)^{n+1}\sum_{\bar{s}\in\widetilde{\mathcal{C}}_{n,k}} \textup{sign}(\bar{s})\textup{Cor}(x_{i_0},\dots,x_{i_{n+k}})\in \mathcal{L}_{n+k}.
\ee

Clearly, $\textup{QLi}_{n+k}$ and  $\textup{QLi}_{n+k}^{\textup{sym}}$ lie in the space $\mathcal{L}^+_{n+k}(\textup{Conf})$ defined in \S \ref{SectionIIonConfSpace}. These functions are closely related.
\begin{lemma} \label{LemmaSymmetrizedQuadrangularPolylogarithm} The following formula holds:
\begin{align*}
   \textup{QLi}_{n+k}^{\textup{sym}}&(x_0,\dots, x_{2n+1})\\
 &=\sum_{0\leq i_0<\dots<i_{r}\leq n} (-1)^{n-r} \textup{QLi}_{n+k}(x_{2i_0},x_{2i_0+1},x_{2i_1},x_{2i_1+1},\dots, x_{2i_r},x_{2i_r+1}).    
\end{align*}
\end{lemma}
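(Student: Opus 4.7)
The natural strategy is to partition the indexing set $\widetilde{\mathcal{C}}_{n,k}$ according to which of the ``pairs'' $\{2i,2i+1\}$ are actually hit by the sequence, and to identify each block with a copy of $\mathcal{C}_{r,n+k-r}$ on fewer points. For $\bar{s}=(i_0,\dots,i_{n+k}) \in \widetilde{\mathcal{C}}_{n,k}$, define the support set
\[
S(\bar{s})=\bigl\{\, i\in\{0,\dots,n\} : \{2i,2i+1\}\cap\{i_0,\dots,i_{n+k}\}\neq\emptyset\,\bigr\}.
\]
Because each $i_\ell$ belongs to exactly one pair $\{2i,2i+1\}$ and the sequence is non-empty, $S(\bar{s})$ is a uniquely determined non-empty subset of $\{0,1,\dots,n\}$. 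This gives a disjoint decomposition of $\widetilde{\mathcal{C}}_{n,k}$ indexed by non-empty subsets $S\subseteq\{0,\dots,n\}$.

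\textbf{The bijection.} First, I would fix a non-empty $S=\{i_0<i_1<\dots<i_r\}$ and establish a bijection between the block $\{\bar{s}\in\widetilde{\mathcal{C}}_{n,k}:S(\bar{s})=S\}$ and $\mathcal{C}_{r,n+k-r}$ given by the order-preserving relabelling $2i_j\leftrightarrow 2j$ and $2i_j+1\leftrightarrow 2j+1$. A sequence in the block uses only values in $\bigcup_j\{2i_j,2i_j+1\}$, has length $r+(n+k-r)+1=n+k+1$, each even value appears at most once, and by definition of ``support $S$'' it contains at least one element of each $\{2i_j,2i_j+1\}$; these are precisely the membership conditions for $\mathcal{C}_{r,n+k-r}$ after relabelling. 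Since $2i_j\mapsto 2j$ sends evens to evens and $2i_j+1\mapsto 2j+1$ sends odds to odds, the number of even entries is preserved, so $\operatorname{sign}(\bar{s})$ is invariant under this bijection. Similarly, the correlator $\textup{Cor}(x_{i_0},\dots,x_{i_{n+k}})$ in the big sum matches the correlator of the relabelled tuple in $\textup{QLi}_{n+k}(x_{2i_0},x_{2i_0+1},\dots,x_{2i_r},x_{2i_r+1})$.

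\textbf{Assembly.} Using (\ref{FormulaDefinitionQLi}) applied to the $2(r+1)$ points $x_{2i_0},\dots,x_{2i_r+1}$ (so the ``$n$'' there is $r$ and the ``$k$'' there is $n+k-r$), the inner sum over the block indexed by $S$ becomes
\[
\sum_{\bar{s}:S(\bar{s})=S}\operatorname{sign}(\bar{s})\,\textup{Cor}(x_{i_0},\dots,x_{i_{n+k}})
=(-1)^{r+1}\,\textup{QLi}_{n+k}(x_{2i_0},x_{2i_0+1},\dots,x_{2i_r},x_{2i_r+1}).
\]
Plugging this into (\ref{FormulaDefinitionQLisym}) contributes $(-1)^{n+1}\cdot(-1)^{r+1}=(-1)^{n-r}$ in front of each $\textup{QLi}_{n+k}$-term, which is exactly the coefficient on the right-hand side. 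Summing over all non-empty $S=\{i_0<\dots<i_r\}\subseteq\{0,\dots,n\}$ completes the proof.

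\textbf{Main obstacle.} The step requiring the most care is verifying the bijection and the sign bookkeeping, in particular confirming that the support set $S(\bar{s})$ is always non-empty (so that we recover precisely the indexing set of the right-hand side, with no stray $S=\emptyset$ term), and checking that the combinatorial constraints defining $\widetilde{\mathcal{C}}_{n,k}$ restrict, under the relabelling, to exactly the constraints defining $\mathcal{C}_{r,n+k-r}$. Once the bijection is set up correctly, the rest is mechanical.
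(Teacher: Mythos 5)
Your proof is correct and is exactly the direct verification the paper intends: the paper's own proof is the one-line remark that the identity ``follows directly from the definitions,'' and your decomposition of $\widetilde{\mathcal{C}}_{n,k}$ by the support set of hit pairs, together with the sign check $(-1)^{n+1}(-1)^{r+1}=(-1)^{n-r}$, is the expected way to spell that out.
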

\begin{proof}
This follows directly from the definitions of $\textup{QLi}_{n,k}$ and $\textup{QLi}_{n,k}^{\textup{sym}}.$
\end{proof}

It is convenient to consider a generating function
\[
\textup{QLi}(x_0,\dots, x_{2n+1})=\sum_{k\geq 0} \textup{QLi}_{n+k}(x_0,\dots, x_{2n+1})\in \L;
\]
similarly for $\textup{QLi}^{\textup{sym}}.$
We introduce the following notation:
\[
\textup{QLi}_{n+k}^{(-)^s}(x_0,x_1,\dots, x_{2n},x_{2n+1})=
\begin{cases}
\textup{QLi}_{n+k}(x_0,x_1,\dots, x_{2n},x_{2n+1})	& \text{\ if \ } s  \text{\ is even, \ }\\ 
-\textup{QLi}_{n+k}(x_1,x_2,\dots, x_{2n+1},x_0)	& \text{\ if \ } s  \text{\ is odd. \ }
\end{cases}
\]
We have the following formula for the coproduct  of quadrangular polylogarithms (see \cite[Theorem 5.5]{Rud20}):
\be \label{ClusterPolylogarithmCoproduct}
\begin{split}
	\Delta^{\mathcal{L}}&\textup{QLi}(x_0,\dots, x_{2n+1})=\\
	&\sum_{\substack{0\leq i<j\leq2n+1\\j-i=2s+1}} \textup{QLi}(x_0,\dots,x_{i},x_{j},\dots,x_{2n+1})\wedge \textup{QLi}^{(-)^{i}}(x_{i},\dots,x_{j}).\\
\end{split}
\ee

From (\ref{ClusterPolylogarithmCoproduct}) it is easy to deduce that $\textup{QLi}_{n+k}$ and  $\textup{QLi}_{n+k}^{\textup{sym}}$ are projectively invariant, see \cite[Proposition 5.7]{Rud20}. Thus, it does not change if we substitute every term $x_i-x_j$ via a Pl{\"u}cker coordinate $\Delta_{ij}.$ The key observation is that quadrangular polylogarithms are cluster polylogarithms on $\textup{Gr}(2,2n+2).$ 

\begin{proposition} \label{PropositionSymbolQuadrangular}
The symbol $\mathcal{S}(\textup{QLi}_{n+k}(x_0,\dots, x_{2n+1}))$ lies in $\textup{CL}_{n+k}\bigl(\mathfrak{M}_{0,2n+2}\bigr).$
\end{proposition}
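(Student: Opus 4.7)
I would prove this by induction on the weight $n+k$, using the coproduct formula (\ref{ClusterPolylogarithmCoproduct}) as the main engine. Integrability of a tensor-algebra lift of $\mathcal{S}(\textup{QLi}_{n+k})$ is automatic, since $\textup{QLi}_{n+k}$ is by construction an element of $\mathcal{L}_{n+k}$, and any iterated non-cocommutative coproduct of an element of $\mathcal{L}$ produces a tensor satisfying the defining integrability conditions. The real content of the proposition is therefore cluster adjacency: every monomial $[\Delta_{a_1 b_1}|\cdots|\Delta_{a_{n+k}b_{n+k}}]$ occurring in the symbol must consist of Pl\"ucker chords lying in a common triangulation of the convex $(2n+2)$-gon.

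The base case is weight $1$: $\textup{QLi}_1(x_0,x_1,x_2,x_3)$ is a log of a cross-ratio and its symbol is manifestly in $\textup{CL}_1 = A$. For the inductive step, I would apply (\ref{ClusterPolylogarithmCoproduct}), which decomposes the cobracket into terms indexed by chords $(i,j)$ with $j-i$ odd, splitting the $(2n+2)$-gon into an outer polygon on vertices $\{0,\dots,i,j,\dots,2n+1\}$ and an inner polygon on $\{i,i+1,\dots,j\}$. Since $j-i$ is odd, each subpolygon has an even number of vertices and admits a genuine quadrangular polylogarithm of strictly smaller weight; by the inductive hypothesis, the symbol of each factor is cluster-adjacent on its own subpolygon.

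The key combinatorial fact is the following: in the $(2n+2)$-gon, the chord $(i,j)$ separates the outer and inner subpolygons, so any chord strictly inside one cannot cross a chord strictly inside the other. Therefore the union of a non-crossing family of chords in the outer subpolygon with a non-crossing family in the inner subpolygon is automatically non-crossing in the full $(2n+2)$-gon, and extends to a triangulation — that is, to a cluster of $\mathfrak{M}_{0,2n+2}$. Combining the two inductive conclusions through the coproduct then yields cluster adjacency for $\mathcal{S}(\textup{QLi}_{n+k}(x_0,\dots,x_{2n+1}))$.

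The main obstacle I foresee is the bookkeeping of iterated cobrackets inside the cofree Lie coalgebra $\textup{CoLie}_\bullet$: symbols there are Lie polynomials and combining two smaller symbols involves a Lie-shuffle, not a literal tensor concatenation. One must check that a single lift to $T^{n+k}(A)$ — naturally obtained from a chosen sequence of odd-diagonal cuts — is simultaneously integrable and cluster adjacent, which amounts to verifying that the "Lie shuffle" of a cluster-adjacent outer tensor with a cluster-adjacent inner tensor stays cluster adjacent. I also note that the coproduct formula contains the degenerate case $j-i=1$, in which the inner "$\textup{QLi}$" on two points should be read as $\log\Delta_{i,i+1}$, a cluster variable; this is a trivial base case but should be spelled out in order to close the induction.
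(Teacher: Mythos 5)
Your proposal is correct and follows essentially the same route as the paper: the paper's proof likewise derives cluster adjacency from the coproduct formula (\ref{ClusterPolylogarithmCoproduct}) together with the observation that every Pl\"ucker coordinate appearing in the outer factor $\textup{QLi}(x_0,\dots,x_i,x_j,\dots,x_{2n+1})$ is weakly separated (non-crossing) from every one appearing in the inner factor $\textup{QLi}^{(-)^i}(x_i,\dots,x_j)$, so that all chords involved fit into a common cluster. The paper states this in three sentences and leaves the induction, the degenerate case $j-i=1$, and the integrability bookkeeping implicit, so your more detailed accounting of those points is a faithful expansion rather than a different argument.
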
	
\begin{proof}
The cluster variables $\Delta_{ij}$ correspond to diagonals of the $(n+2)$-gon with vertices labeled by points $x_0,\dots, x_{2n+1}$. We call $\Delta_{ij}$ and $\Delta_{i'j'}$  weakly separated if the corresponding diagonals have no common interior points. A collection of pairwise weekly separated cluster variables is contained in some cluster.  The statement follows  immediately from (\ref{ClusterPolylogarithmCoproduct}), because all Pl{\"u}cker coordinates appearing in $\textup{QLi}(x_0,\dots,x_{i},x_{j},\dots,x_{2n+1})$ are weakly separated from all  Pl{\"u}cker coordinates appearing in $\textup{QLi}^{(-)^{i}}(x_{i},\dots,x_{j}).$
\end{proof}

The main property of quadrangular polylogarithms is the {\it quadrangulation formula} proved in  \cite[Theorem 1.2]{Rud20}. Consider a convex $(2n+2)$-gon $\P$ with vertices labeled by points $x_0,\dots, x_{2n+1}\in \mathbb{P}^1_\F.$ Every quadrangle inside $\P$ with vertices $x_{i_0}, x_{i_1},x_{i_2},x_{i_3}$  determines a cross-ratio $[x_{i_0}, x_{i_1},x_{i_2},x_{i_3}]\in \F^{\times}.$  Let $\mathcal{Q}(\P)$ be the set of quadrangulations of $\P;$  for a quadrangulation $Q\in \mathcal{Q}$ denote by $\mathrm{t}_Q$ the dual tree. We denote by $\Li_k(\mathrm{t})$ a  certain sum of multiple polylogarithms of weight $n+k$ and depth at most $n$ evaluated at products of cross-ratios, corresponding to the vertices of $\mathrm{t}$ (for details, see  \cite[\S\S 4.1--4.3]{Rud20}). Then the following formula holds:
\be \label{FormulaQuadrangulationFormula}
\textup{QLi}_{n+k}(x_0,\dots,x_{2n+1})=\sum_{Q\in \mathcal{Q}(\P)} \Li_{k}(\mathrm{t}_Q).
\ee
It follows that  $\textup{QLi}_{n+k}(x_0,\dots, x_{2n+1})$ has depth at most $n$ for any $k\geq 0.$

\begin{proposition}\label{SymmertyQuadrangularPolylogarithm}
The following equations hold:
\be \label{FormulaSymmetry}
\textup{QLi}_{n+k}^{\textup{sym}}(x_1,x_2,\dots, x_{2n+1},x_0)=(-1)^{n+k}\textup{QLi}^{\textup{sym}}_{n+k}(x_0,x_1,x_2,\dots, x_{2n+1}).
\ee
\end{proposition}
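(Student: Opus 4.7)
The plan is to unfold the defining sum (\ref{FormulaDefinitionQLisym}) on the left-hand side and use the cyclic invariance of $\textup{Cor}$ to reorganize terms. Setting $y_i = x_{(i+1) \bmod (2n+2)}$ and applying cyclic invariance to put each correlator $\textup{Cor}(y_{i_0},\ldots,y_{i_{n+k}})$ into nondecreasing normal form, I would reindex the left-hand side by a combinatorial set $\widetilde{\mathcal{C}}'_{n,k}$ consisting of nondecreasing sequences of length $n+k+1$ in $\{0,\ldots,2n+1\}$ in which odd entries (rather than even) appear at most once. Concretely, if $\bar{s} \in \widetilde{\mathcal{C}}_{n,k}$ ends in $p$ copies of $2n+1$, the shifted indices $(i_0+1,\ldots,i_{n+k}+1) \bmod (2n+2)$, rotated cyclically, begin with $p$ copies of $0$; under this bijection $\bar{s} \leftrightarrow \bar{t}$ the even entries of $\bar{s}$ map to the odd entries of $\bar{t}$ while the copies of $2n+1$ become repeatable copies of $0$, so the sign transforms as $(-1)^{\#\mathrm{even}(\bar{s})} = (-1)^{\#\mathrm{odd}(\bar{t})}$. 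The proposition thus reduces to an identity in $\mathcal{L}^+_{n+k}(\textup{Conf}_{2n+2})$ between two sums of correlators indexed by different combinatorial sets, with signs depending on the opposite parity.

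The main obstacle is proving this combinatorial identity, since the two index sets are incompatible. My approach is induction on the weight $n+k$. For the inductive step, combining the coproduct formula (\ref{ClusterPolylogarithmCoproduct}) with Lemma \ref{LemmaSymmetrizedQuadrangularPolylogarithm} yields an expression for $\Delta\, \textup{QLi}^{\textup{sym}}_{n+k}$ as a sum over wedges of smaller symmetrized quadrangular polylogarithms, and applying the inductive hypothesis to each wedge factor shows that the two sides of the identity have matching cobrackets. Lemma \ref{LemmaEqualityOfSymbolsImpliesConstant} then forces their difference to lie in the subspace $\mathcal{L}^+_{n+k}(\textup{Conf}_2)$, which is spanned by degenerate correlators of the form $\textup{Cor}^{\L}(0,\ldots,0,1,\ldots,1)$. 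This residual ``constant'' is pinned down by specializing all $x_i$ to one of two values, for which both sides reduce to explicit combinations of such degenerate correlators that can be matched directly. The base case (small weight) is verified by a direct computation using the degeneracy identities $\textup{Cor}^{\L}(\ldots,x,x,\ldots) = 0$, which kill most of the terms and leave only correlators with pairwise distinct arguments.
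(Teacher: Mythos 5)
Your proposal is correct in outline and follows the same three-step architecture as the paper's proof: establish the identity at the level of symbols, invoke Lemma \ref{LemmaEqualityOfSymbolsImpliesConstant} to conclude that the difference is a constant lying in $\mathcal{L}^+_{n+k}(\textup{Conf}_{2})$, and kill the constant by specialization. Where you differ is in the execution of the first and third steps. For symbol equality, the paper avoids your weight induction entirely: it observes that it suffices to check that $\textup{pr}_0$ of the difference vanishes (the remaining projections then follow from cyclic symmetry), and this single projection identity drops out of the coproduct formula (\ref{ClusterPolylogarithmCoproduct}). Your induction on the cobracket should work, but the step you gloss over --- rewriting $\Delta\,\textup{QLi}^{\textup{sym}}$ so that the inductive hypothesis (which concerns $\textup{QLi}^{\textup{sym}}$, not $\textup{QLi}$) applies to each wedge factor, and tracking how the cyclic shift's wrap-around terms interact with the sum over subsets in Lemma \ref{LemmaSymmetrizedQuadrangularPolylogarithm} --- is precisely the combinatorial bookkeeping that the projection trick is designed to bypass. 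For the final step, the paper specializes to $x_0=\dots=x_{2n+1}$, where every correlator has the form $(x,\dots,x)=0$ and both sides vanish outright. Your specialization to two values is weaker: the degenerate correlators $\textup{Cor}^{\L}(0,\dots,0,1,\dots,1)$ are not linearly independent (they satisfy relations of the form (\ref{FormulaRelationsCor})), so your claim that the two sides ``can be matched directly'' is not automatic and may force you to invoke those relations; specializing to a single point removes this issue for free. Finally, the reindexing by a mirrored index set $\widetilde{\mathcal{C}}'_{n,k}$ in your first paragraph is harmless but unnecessary, since nothing downstream uses it.
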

\begin{proof}
First, we show that (\ref{FormulaSymmetry}) holds on the level of symbols. It is sufficient to check  that 
\be \label{FormulaProjectionSymmetry}
\textup{pr}_0\left(\textup{QLi}_{n+k}^{\textup{sym}}(x_1,x_2,\dots, x_{2n+1},x_0)-(-1)^{n+k}\textup{QLi}^{\textup{sym}}_{n+k}(x_0,x_1,x_2,\dots, x_{2n+1})\right)=0;
\ee
the rest follows from cyclic symmetry. Identity (\ref{FormulaProjectionSymmetry}) follows easily from (\ref{ClusterPolylogarithmCoproduct}).

By Lemma \ref{LemmaEqualityOfSymbolsImpliesConstant}, the equality of symbols implies that 
\[
\textup{QLi}_{n+k}^{\textup{sym}}(x_1,x_2,\dots, x_{2n+1},x_0)-(-1)^{n+k}\textup{QLi}^{\textup{sym}}_{n+k}(x_0,x_1,x_2,\dots, x_{2n+1})\in \mathcal{L}^+_n(\textup{Conf}_{2}),
\]
so is constant. Specializing to the point $x_0=\dots=x_{2n+1},$ we obtain the statement.
\end{proof}

Now we are ready to formulate the main functional equation satisfied by quadrangular polylogarithms.
\begin{proposition}\label{PropositionMainFunctionalEquation} For $n<N-1$ the following equality holds:
\begin{equation}\label{FormulaMainFunctionalEquation}
\sum_{0\leq i_0<\dots <i_{2r+1}\leq N}(-1)^{i_0+\dots+i_{2r+1}}\textup{QLi}_{n}^{\textup{sym}}(x_{i_0},x_{i_1},\dots, x_{i_{2r+1}})=0.
\end{equation} 
\end{proposition}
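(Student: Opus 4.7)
The plan is to follow the three-step strategy used in the proof of Proposition~\ref{SymmertyQuadrangularPolylogarithm}: first verify the identity at the level of symbols, then invoke Lemma~\ref{LemmaEqualityOfSymbolsImpliesConstant} to reduce the problem to a constant in $\mathcal{L}^+_n(\textup{Conf}_2)$, and finally specialize the variables to force that constant to vanish.

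Let $\Phi_n = \Phi_n(x_0,\dots,x_N)$ denote the left-hand side of (\ref{FormulaMainFunctionalEquation}). By construction $\Phi_n$ is a linear combination of correlators with nondecreasing indices, so it lies in $\mathcal{L}^+_n(\textup{Conf}_{N+1})$. The first and main step is to show that $\mathcal{S}(\Phi_n)=0$. I would proceed by induction on the weight $n$, the base case $n=2$ reducing, via Proposition~\ref{PropositionWeightTwo}, to the Abel $5$-term relation (and its iterates for $N>4$). For the inductive step, I would first express $\textup{QLi}^{\textup{sym}}$ in terms of $\textup{QLi}$ using Lemma~\ref{LemmaSymmetrizedQuadrangularPolylogarithm} and then substitute the coproduct formula (\ref{ClusterPolylogarithmCoproduct}). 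Organizing $\Delta \Phi_n$ by the ``internal edge'' $(x_{j_0},x_{j_1})$ appearing in each $\textup{QLi}$ term, I expect fixing such an edge to partition the remaining indices of $\{0,\dots,N\}$ into two intervals, the external signs $(-1)^{i_0+\dots+i_{2r+1}}$ to factor accordingly, and each wedge factor to reassemble, after reindexing, into an instance of $\Phi$ in strictly lower weight. The induction hypothesis would then yield $\Delta \Phi_n = 0$, whence $\mathcal{S}(\Phi_n)=0$ follows because the symbol is obtained by iterating the cobracket.

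Once $\mathcal{S}(\Phi_n)=0$ is established, Lemma~\ref{LemmaEqualityOfSymbolsImpliesConstant} gives $\Phi_n \in \mathcal{L}^+_n(\textup{Conf}_2)$; in particular $\Phi_n$ does not depend on the configuration $(x_0,\dots,x_N)$. The last step is to specialize $x_0=x_1=\dots=x_N=x$, realized as an iterated composition of one-variable specializations $\textup{Sp}_{x_i\to x_{i-1}}$. Under this specialization every summand of $\Phi_n$ becomes $\textup{QLi}^{\textup{sym}}_n(x,x,\dots,x)$, which is a $\Q$-linear combination of correlators $\textup{Cor}^{\L}(x,\dots,x)$; these all vanish by the defining relation $(x,x,\dots,x)=0$ in $\mathcal{A}_n(\F)$. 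Thus the specialized value of $\Phi_n$ is zero, and since $\Phi_n$ is already known to be a ``constant'' in $\mathcal{L}^+_n(\textup{Conf}_2)$, it must vanish identically.

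The main obstacle will be the sign bookkeeping in the inductive step. One has to reconcile three sources of signs --- the external alternating signs $(-1)^{i_0+\dots+i_{2r+1}}$ from (\ref{FormulaMainFunctionalEquation}), the internal signs $\textup{sign}(\bar{s})$ coming from the definition of $\textup{QLi}^{\textup{sym}}$ via $\widetilde{\mathcal{C}}_{n,k}$, and the parity sign $(-)^{j_0}$ in (\ref{ClusterPolylogarithmCoproduct}) --- and show that after grouping they align so that each wedge factor of $\Delta \Phi_n$ is itself a lower-weight instance of $\Phi$. The cyclic symmetry from Proposition~\ref{SymmertyQuadrangularPolylogarithm} is the key tool I would invoke to make this matching work, effectively letting one ``rotate'' the interval on either side of a chosen internal edge into the canonical form required by $\Phi$.
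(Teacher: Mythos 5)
The overall architecture you propose (vanishing of the symbol, then Lemma~\ref{LemmaEqualityOfSymbolsImpliesConstant}, then specialization to a single point) is the template of the paper's proof of Proposition~\ref{SymmertyQuadrangularPolylogarithm}, and steps two and three of your plan are sound. The genuine gap is that step one --- the claim $\Delta\Phi_n=0$ --- is the entire content of the proof in your approach, and the mechanism you sketch for it does not match how the coproduct actually decomposes. In (\ref{ClusterPolylogarithmCoproduct}) a term of $\Delta\,\textup{QLi}(x_{s_0},\dots,x_{s_{2r+1}})$ has the form $\textup{QLi}(x_{s_0},\dots,x_{s_a},x_{s_b},\dots,x_{s_{2r+1}})\wedge\textup{QLi}^{(-)^a}(x_{s_a},\dots,x_{s_b})$: the two wedge factors \emph{share} the endpoints $x_{s_a},x_{s_b}$, the inner factor is forced to be a \emph{consecutive run} of the chosen index set $S$, and its position $a$ is constrained by parity. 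Consequently, when you group $\Delta\Phi_n$ by a fixed inner factor on an index set $T$, the complementary sum runs over subsets of $\{0,\dots,N\}$ that contain the two prescribed endpoints of $T$ and avoid the interior of its convex hull --- this is not of the form (\ref{FormulaMainFunctionalEquation}), which sums over \emph{all} even subsets, so the induction hypothesis does not directly apply. On top of this, the lower-weight instances would have to satisfy their own constraint $n'<N'-1$, which is not checked, and the translation between $\textup{QLi}$ and $\textup{QLi}^{\textup{sym}}$ via Lemma~\ref{LemmaSymmetrizedQuadrangularPolylogarithm} adds another layer of sign bookkeeping that you explicitly defer. Since the identity is true, some version of this computation can surely be pushed through, but as written the central step is an expectation rather than an argument, and the stated expectation is inaccurate.

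For contrast, the paper proves Proposition~\ref{PropositionMainFunctionalEquation} by a short generating-function computation that bypasses symbols, coproducts, and specialization entirely. Reading off (\ref{FormulaDefinitionQLisym}), one has $\textup{QLi}^{\textup{sym}}(x_{i_0},\dots,x_{i_{2r+1}})=\textup{Cor}\bigl(\prod_j(1-t_{i_{2j}})/\prod_j(1-t_{i_{2j+1}})\bigr)$ for a linear map $\textup{Cor}$ on formal power series, and the full alternating sum corresponds to the rational function $(t_0-t_1)(t_1-t_2)\cdots(t_{N-1}-t_N)\big/\bigl((1-t_1)\cdots(1-t_N)\bigr)$, which has no monomials of degree less than $N$; since weight-$n$ correlators see only degree-$(n+1)$ monomials, the sum vanishes for $n<N-1$. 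This proves the stronger statement that (\ref{FormulaMainFunctionalEquation}) holds termwise in $\mathcal{A}_n(\F)$, using no properties of correlators at all --- something your route, which works modulo the relations in $\L_n(\F)$, cannot give.
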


\begin{proof} Remarkably,  the statement follows directly from (\ref{FormulaDefinitionQLisym}): no properties of correlators are used in the proof. 
Consider a map 
\[
\textup{Cor}\colon \mathbb{Z}[[t_0,\dots,t_N]] \lra \mathcal{L}(\F)
\]
sending formal power series $\sum n_{I}t_1^{k_0}\dots t_N^{k_N}$ to
\[
\textup{Cor}\left(\sum n_{k_0,\dots,k_N}t_0^{k_0}\dots t_N^{k_N}\right) = \sum n_{k_0,\dots,k_N} \textup{Cor}\Bigl(\underbrace{x_0,\dots,x_0}_{k_0},\dots,\underbrace{x_N,\dots,x_N}_{k_N}\Bigr).
\]

By (\ref{FormulaDefinitionQLisym}), we have
\[
\textup{QLi}^{\textup{sym}}(x_{i_0},x_{i_1},\dots, x_{i_{2r+1}})=\textup{Cor}\left(\frac{(1-t_{i_0}) (1-t_{i_2}) \dots(1-t_{i_{2r}} )\ \ \  }{(1-t_{i_1}) (1-t_{i_3}) \dots (1-t_{i_{2r+1}})}\right).
\]
We need to show that the power series 
\begin{align*}
\Psi(x_0,\dots,x_N)=\sum_{0\leq i_0<\dots <i_{2r+1}\leq N}(-1)^{i_0+\dots+i_{2r+1}}\frac{(1-t_{i_0}) (1-t_{i_2}) \dots(1-t_{i_{2r}} )\ \ \  }{(1-t_{i_1}) (1-t_{i_3}) \dots (1-t_{i_{2r+1}})}
\end{align*}
has no terms of degree less than $N.$ This follows from an elementary identity
\[
\Psi(t_0,\dots,t_N)=
\frac{(t_{0}-t_1)(t_{1}-t_{2})\dots (t_{N-1}-t_{N})}{(1-t_{1})\ (1-t_2)\ \dots \ (1-t_{N})}.
\]
\end{proof}

\subsection{The space of quadrangular polylogarithms}

For $n\geq 2$,  consider a $\mathbb{Q}$-vector space $\mathcal{Q}_n(m)\subseteq \mathcal{L}^+_n(\textup{Conf}_{m+1})$ spanned by quadrangular polylogarithms 
\[
\textup{QLi}_{n}(x_{i_0},\dots,x_{i_{2r+1}})
\]
for $0\leq i_0< \dots< i_{2r+1}\leq m.$ Our first goal is to show that the sequence of spaces  $\mathcal{Q}_n(0), \mathcal{Q}_n(1), \dots$ has the structure of a cocyclic vector space in the sense of Connes (see \cite{Con83}, \cite{CC15}).

\begin{lemma} For a periodic nondecreasing map $\alpha\colon [m_1] \lra [m_2]$ an element
\[
 \alpha\textup{QLi}_{n}(x_{i_0},\dots,x_{i_{2r+1}}):=\textup{QLi}_{n}\bigl(x_{ \alpha(i_0)},\dots,x_{ \alpha(i_{2r+1})}\bigr)
\]
lies in $\mathcal{Q}_n(m_2).$
\end{lemma}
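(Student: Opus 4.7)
The plan is to factor $\alpha$ through the standard generators of Connes' cocyclic category. Every periodic nondecreasing map $\alpha\colon [m_1]\to[m_2]$ factors as a composition of face maps (strictly increasing injections), degeneracy maps (surjections identifying two consecutive indices), and the cyclic rotation. Since the property ``$\alpha\textup{QLi}_n\in\mathcal{Q}_n(m_2)$'' is closed under composition, it suffices to verify it on each of these three generators.

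For a face map there is nothing to do: a strictly increasing $\alpha$ keeps $\alpha(i_0)<\dots<\alpha(i_{2r+1})$ strictly increasing in $[m_2]$, so $\alpha\textup{QLi}_n$ is by definition a spanning element of $\mathcal{Q}_n(m_2)$. For the cyclic rotation I would first pass to the symmetrized version $\textup{QLi}_n^{\textup{sym}}$ via the invertible (triangular, telescoping) change-of-basis of Lemma \ref{LemmaSymmetrizedQuadrangularPolylogarithm}. By Proposition \ref{SymmertyQuadrangularPolylogarithm} a single step of cyclic rotation multiplies $\textup{QLi}_n^{\textup{sym}}$ by the sign $(-1)^n$, so after rotation and, if necessary, cyclic rearrangement to restore a strictly increasing order, the element manifestly lies in the symmetrized analog of $\mathcal{Q}_n(m_2)$; inverting the change of basis returns the statement to $\textup{QLi}_n$.

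The main obstacle is the degeneracy case, where I must show that $\textup{QLi}_n(x_{k_0},\dots,x_{k_{2r+1}})$ at a weakly increasing tuple with a consecutive repetition $k_a=k_{a+1}$ lies in $\mathcal{Q}_n(m_2)$. My plan is to invoke the main functional equation (\ref{FormulaMainFunctionalEquation}) for $\textup{QLi}_n^{\textup{sym}}$: apply it with $N$ chosen slightly larger than the number of distinct values among the $k_i$'s, and then specialize two of the $x_i$'s to coincide. One arranges the specialization so that exactly one summand of (\ref{FormulaMainFunctionalEquation}) becomes the degenerate $\textup{QLi}_n^{\textup{sym}}$ I am trying to rewrite, while every other summand is a $\textup{QLi}_n^{\textup{sym}}$ at a strictly increasing tuple (already in the symmetrized span). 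Solving the resulting linear relation for the degenerate term and then returning from $\textup{QLi}_n^{\textup{sym}}$ to $\textup{QLi}_n$ via Lemma \ref{LemmaSymmetrizedQuadrangularPolylogarithm} produces the desired expression. The delicate point will be choosing the specialization so that the degenerate term appears with a nonzero coefficient and no uncontrolled cancellations occur; if a single application of (\ref{FormulaMainFunctionalEquation}) does not isolate it, an induction on the number of repeated pairs --- peeling off one repetition at a time --- will finish the argument.
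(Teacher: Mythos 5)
Your overall strategy --- factoring $\alpha$ through coface maps, codegeneracies, and the cyclic shift, and checking each generator --- is exactly the paper's, and your treatment of the coface and cyclic cases (via Lemma \ref{LemmaSymmetrizedQuadrangularPolylogarithm} and Proposition \ref{SymmertyQuadrangularPolylogarithm}) matches the paper's proof. The problem is the degeneracy case, where your argument has a genuine gap. Equation (\ref{FormulaMainFunctionalEquation}) is an alternating sum of $\textup{QLi}_n^{\textup{sym}}$ over \emph{all} $(2r+2)$-element subsets of $\{0,\dots,N\}$. If you specialize two points $x_a=x_b$, then \emph{every} subset containing both $a$ and $b$ produces a degenerate summand --- there are $\binom{N-1}{2r}$ of them, not one --- and these degenerate terms involve different collections of remaining points, so the relation does not isolate the single degenerate $\textup{QLi}_n^{\textup{sym}}$ you are after. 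There is no evident choice of specialization for which ``exactly one summand becomes the degenerate term,'' and the proposed induction on the number of repeated pairs does not repair this, because already at one repetition you obtain a relation among many distinct one-repetition terms rather than an expression for any single one of them.

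The paper's handling of the degeneracy case is much more direct and uses no functional equation at all: it shows that $\sigma_{2i}\,\textup{QLi}_{n+k}(x_0,\dots,x_{2n+1})=0$ straight from the definition (\ref{FormulaDefinitionQLi}). Indeed, when $x_{2i}=x_{2i+1}$, the sequences $\bar{s}\in\mathcal{C}_{n,k}$ containing the even index $2i$ (necessarily exactly once) pair off bijectively with those not containing it (obtained by trading that occurrence of $2i$ for an extra copy of $2i+1$); the paired sequences give equal correlators but opposite values of $\textup{sign}(\bar{s})$, since the count of even entries changes parity, so the whole sum cancels. The odd codegeneracies then follow from the identity $\sigma_{2i+1}=\tau^{-1}\sigma_{2i}\tau$ together with the already-established cyclic case. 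You should replace your functional-equation argument with this cancellation; as written, the degeneracy step of your proof does not go through.
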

\begin{proof}
Any periodic nondecreasing map is a composition of  coface maps $\delta_i\colon [m-1]\lra [m]$  for $0\leq i \leq m,$ codegeneracy maps $\sigma_i\colon [m+1]\lra [m]$  for $0\leq i \leq m$, and cyclic shifts  $\tau\colon [m]\lra [m] $ defined by formulas
\[
\delta_i(j)=
\begin{cases} 
j & \text{ if } j<i,\\
j+1 & \text{ if } j\geq i
\end{cases}, \ \ \ 
\sigma_i(j)=
\begin{cases} 
j & \text{ if } j\leq i,\\
j-1 & \text{ if } j> i
\end{cases}, \ \text{ and } \ 
\tau(j)=(j+1) \text{ mod } m+1.
\]
Thus it is sufficient to show that $\textup{QLi}_{n}(x_{ \alpha(i_0)},\dots,x_{ \alpha(i_{2r+1})})$ lies in $\mathcal{Q}_n(m_2)$ for $\alpha=\delta_i, \sigma_i$ or $\tau.$ For $\delta_i$, this is obvious.  For $\tau_m$, the statement follows from Proposition \ref{SymmertyQuadrangularPolylogarithm}. For $\sigma_{2i}$, we have
\[
\sigma_{2i}\textup{QLi}_{n+k}(x_{0},\dots,x_{2n+1})=0
\]
by  (\ref{FormulaDefinitionQLi}). Finally, we have $\sigma_{2i+1} =\tau ^{-1}\sigma_{2i}\tau,$ so for $\sigma_{2i+1}$ the statement holds as well.
\end{proof}

A cocyclic vector space is a cosimplicial abelian group, so we can apply the (cosimplicial) Dold-Kan correspondence. Recall that for a cosimplicial abelian group $A(0), A(1),\dots$ one defines normalized cochain complex  $CN(A),$ where $CN(A)^m$ equals the quotient of $A(m)$ by the images of the coface maps $\delta_0,\dots,\delta_{m-1}.$ The map $(-1)^m\delta_{m}$ induces the differential $CN(A)^{m-1}\lra CN(A)^m.$ The Dold-Kan correspondence implies that the group  $CN(A)^m$ is isomorphic to the group 
\[
\{a\in A(m)\mid \sigma_ja=0 \text{ for } 0\leq j\leq m-1\}.
\]
Moreover, we have a canonical isomorphism
\be \label{FormulaDoldKanSum}
\bigoplus_{[m]\twoheadrightarrow [k]}CN\left(A\right)^k\cong A_m.
\ee

\begin{proposition}\label{PropositionCNQLi} The space $CN\left(\mathcal{Q}_n\right)^{m}$ has dimension one for $3\leq m\leq n+1$ and zero otherwise. 
\end{proposition}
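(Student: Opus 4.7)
The plan is to construct an explicit generator of $CN(\mathcal{Q}_n)^m$ from the alternating sum appearing in Proposition~\ref{PropositionMainFunctionalEquation}, and to show that it is unique up to scalar using the quotient description of $CN$. Set
\[
T_m := \sum_{\substack{I\subseteq\{0,\dots,m\}\\ |I|\text{ even},\ |I|\ge 2}} (-1)^{\sum_{i\in I} i}\, \textup{QLi}^{\textup{sym}}_{n}(x_I),
\]
which lies in $\mathcal{Q}_n(m)$ by Lemma~\ref{LemmaSymmetrizedQuadrangularPolylogarithm}.

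The first step is to verify $T_m\in CN(\mathcal{Q}_n)^m$, that is, $\sigma_j T_m=0$ for every $0\le j\le m-1$. I would group the terms of $\sigma_j T_m$ by $|I\cap\{j,j+1\}|\in\{0,1,2\}$. Subsets with $|I\cap\{j,j+1\}|=1$ come in pairs related by swapping $j\leftrightarrow j+1$, and their signs $(-1)^j$ and $(-1)^{j+1}$ cancel. The remaining two cases pair a subset $I$ with $I\cap\{j,j+1\}=\emptyset$ against the subset $I\cup\{j,j+1\}$: in the latter, $\sigma_j$ produces two consecutive equal arguments $x_j,x_j$ in $\textup{QLi}^{\textup{sym}}_n$, and the generating-function identity from the proof of Proposition~\ref{PropositionMainFunctionalEquation} shows that a factor $(1-t_j)/(1-t_j)$ cancels, reducing the term to $\textup{QLi}^{\textup{sym}}_n$ on the shifted $I$. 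The two partners have signs differing by $(-1)^{2j+1}=-1$, yielding the required cancellation.

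For the upper bound I use the quotient description $CN(\mathcal{Q}_n)^m = \mathcal{Q}_n(m)/\sum_{i=0}^{m-1}\delta_i\mathcal{Q}_n(m-1)$. Every generator $\textup{QLi}_n(x_I)$ with some $k\in\{0,\dots,m-1\}\setminus I$ lies in $\delta_k\mathcal{Q}_n(m-1)$, hence is trivial in the quotient; so the only candidates for nonzero classes are $\textup{QLi}_n(x_I)$ with $\{0,\dots,m-1\}\subseteq I\subseteq\{0,\dots,m\}$. Combined with $|I|$ even, $|I|\ge 2$ and $|I|\le 2n+2$, at most one such $I$ exists for each $m$, giving $\dim CN(\mathcal{Q}_n)^m\le 1$. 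Expanding $T_m$ via Lemma~\ref{LemmaSymmetrizedQuadrangularPolylogarithm} then shows that its class in the quotient equals this unique leading generator up to an explicit nonzero scalar, so $T_m$ spans the space.

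It remains to determine when $T_m$ vanishes. For $m\ge n+2$, Proposition~\ref{PropositionMainFunctionalEquation} gives $T_m=0$ and hence $CN(\mathcal{Q}_n)^m=0$. For $3\le m\le n+1$, the first nontrivial instance of the functional equation occurs only at $N=n+2$, so no relation kills the leading generator and $T_m\neq 0$; this nonvanishing can be confirmed by verifying that $\mathcal{S}(T_m)\in\textup{L}_n(\textup{Conf}_{m+1})$ is nonzero via Lemma~\ref{LemmaEqualityOfSymbolsImpliesConstant}. The edge cases $m\le 2$ are handled separately using the projective invariance of $\textup{QLi}_n$, which forces the leading candidate (when it exists) to coincide with its own image under some admissible $\delta_k$ and therefore vanish in the quotient. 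The main obstacle is the cancellation argument $\sigma_j T_m=0$, which requires careful sign bookkeeping across all three overlap cases and uses the rational-function identity for $\textup{QLi}^{\textup{sym}}_n$ in an essential way.
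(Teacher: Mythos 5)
Your core argument is the paper's: both proofs use the quotient description $CN(\mathcal{Q}_n)^m=\mathcal{Q}_n(m)/\sum_{i=0}^{m-1}\delta_i\mathcal{Q}_n(m-1)$, observe that every generator $\textup{QLi}_n(x_I)$ missing an index in $\{0,\dots,m-1\}$ dies there so that at most the single admissible $I$ with $\{0,\dots,m-1\}\subseteq I\subseteq\{0,\dots,m\}$ and $|I|$ even survives, kill that class for $m\geq n+2$ by Proposition \ref{PropositionMainFunctionalEquation} (whose left-hand side, expanded through Lemma \ref{LemmaSymmetrizedQuadrangularPolylogarithm}, reduces modulo the coface images to exactly that one generator), and establish nonvanishing for $3\leq m\leq n+1$ by a symbol computation. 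What you add is the explicit normalized representative $T_m$ together with a direct check that $\sigma_jT_m=0$; the three-case pairing and the cancellation of $(1-t_j)/(1-t_j)$ in the generating-series presentation of $\textup{QLi}^{\textup{sym}}_n$ are correct, and this gives a pleasant explicit cochain, but it is logically redundant: once the quotient model bounds the dimension by one and you decide the fate of the surviving class, Dold--Kan gives you nothing further from exhibiting the corresponding element in the kernel of the degeneracies.

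Three small repairs. The sum defining $T_m$ should run over $4\leq|I|\leq 2n+2$: elements of $\mathcal{Q}_n(m)$ have at least four arguments (this is forced by the dimension formula $\binom{m}{3}+\dots+\binom{m}{n+1}$, which vanishes for $m\leq 2$, and by the fact that $\textup{QLi}_n(x_i,x_j)$ is a nonzero constant in $\mathcal{L}^+_n(\textup{Conf}_2)$ that would otherwise survive in $CN(\mathcal{Q}_n)^1$), and $\textup{QLi}_n$ of more than $2n+2$ arguments is undefined. With that convention the cases $m\leq 2$ are immediate---there is no admissible surviving $I$---so your separate ``projective invariance'' argument is unnecessary, and as stated it would not dispose of $m=1$, where the only coface comes from $\mathcal{Q}_n(0)=0$. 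Finally, ``no relation applies, hence $T_m\neq 0$'' is not an argument; the lower bound for $3\leq m\leq n+1$ genuinely rests on the symbol computation you defer to, exactly as in the paper.
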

\begin{proof} Consider the space $CN\left(\mathcal{Q}_n\right)^{m}$ which is the quotient of $\mathcal{Q}_n(m)$ by the images of the coface maps $\delta_0,\dots,\delta_{m-1}.$ Proposition \ref{PropositionMainFunctionalEquation} 
implies that it is zero except for $3\leq m\leq n+1,$ where it is generated by
$\textup{QLi}_{n}(x_0,\dots,x_m)$ for an odd $m$ and by $\textup{QLi}_{n}(x_0,\dots,x_{m-1})$
for an even $m.$  Looking at the symbols of these functions, it is easy to show that these elements are nonzero in $CN\left(\mathcal{Q}_n\right)^{m}$.
\end{proof}

\begin{corollary} The following formula holds:
\be \label{FormulaDimensionQLi}
 \textup{dim}(\mathcal{Q}_n(m))={m \choose 3}+{m \choose 4}+\dots+{m \choose n+1}.
\ee
\end{corollary}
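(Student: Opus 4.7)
The plan is to derive the formula as a direct consequence of the cosimplicial Dold--Kan decomposition (\ref{FormulaDoldKanSum}) applied to the cocyclic vector space $\mathcal{Q}_n$, combined with the dimension count for $CN(\mathcal{Q}_n)^k$ just established in Proposition~\ref{PropositionCNQLi}. Since the hard part of the argument (identifying the normalized cochains with the one-dimensional space generated by a single quadrangular polylogarithm when $3 \leq k \leq n+1$) has already been carried out, the corollary reduces to pure bookkeeping.

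First, I would invoke the isomorphism
\[
\mathcal{Q}_n(m) \cong \bigoplus_{[m]\twoheadrightarrow [k]} CN(\mathcal{Q}_n)^k
\]
from (\ref{FormulaDoldKanSum}), so that $\dim \mathcal{Q}_n(m) = \sum_{k\geq 0} N(m,k)\,\dim CN(\mathcal{Q}_n)^k$, where $N(m,k)$ denotes the number of order-preserving surjections $[m]\twoheadrightarrow [k]$. Next, I would observe that such a surjection is determined by specifying the $k$ places where the function jumps among the $m$ gaps between consecutive elements of $\{0,1,\dots,m\}$; hence $N(m,k) = \binom{m}{k}$.

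Finally, substituting the values $\dim CN(\mathcal{Q}_n)^k = 1$ for $3 \leq k \leq n+1$ and $0$ otherwise, furnished by Proposition~\ref{PropositionCNQLi}, yields
\[
\dim \mathcal{Q}_n(m) = \sum_{k=3}^{n+1}\binom{m}{k},
\]
which is precisely (\ref{FormulaDimensionQLi}). There is no substantial obstacle in this step; the only thing requiring care is confirming the indexing convention so that $[m]$ really has $m+1$ elements and the binomial coefficients match, and that the cocyclic structure is handled as a cosimplicial one for the purposes of Dold--Kan (the extra cyclic operator $\tau$ played its role already inside Proposition~\ref{PropositionCNQLi} via Proposition~\ref{SymmertyQuadrangularPolylogarithm} and plays no further part in the dimension count).
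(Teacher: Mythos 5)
Your proposal is correct and follows exactly the paper's own route: the paper proves this corollary by citing Proposition~\ref{PropositionCNQLi} together with the Dold--Kan decomposition (\ref{FormulaDoldKanSum}), and your count of order-preserving surjections $[m]\twoheadrightarrow[k]$ as $\binom{m}{k}$ is the correct bookkeeping that the paper leaves implicit.
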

\begin{proof}
The statement follows from Proposition \ref{PropositionCNQLi} and (\ref{FormulaDoldKanSum}).
\end{proof}

\subsection{Proof of Theorem  \ref{TheoremClusterTypeA}} \label{SectionSpaceClusterIntegrable}
For $n\geq 2$, the sequence of vector spaces 
\[
\mathcal{C}_n(m)=\textup{CL}_n \left(\mathfrak{M}_{0,m+1}\right), \ m\geq 0
\]
 has a structure of the cocyclic object in the category of $\mathbb{Q}$-vector spaces. For a periodic nondecreasing map
$\alpha\colon [m_1] \lra [m_2]$,
we define the corresponding map on Pl{\"u}cker coordinates by the  formula 
\[
\alpha(\Delta_{ij})=
\begin{cases}
\Delta_{\alpha(i)\alpha(j)} & \text{ if } \alpha(i)\neq \alpha(j),\\
0 & \text{ if } \alpha(i)=\alpha(j)
\end{cases}
\]
and extend this map to the space  $\textup{CL}_{n}$ in a natural way. It is easy to see that a periodic nondecreasing map sends cluster adjacent symbols  to cluster adjacent. Furthermore, $\alpha\Bigl(\left[x_{i_1},x_{i_2},x_{i_3},x_{i_4}\right]\wedge (1-\left[x_{i_1},x_{i_2},x_{i_3},x_{i_4}\right])\Bigr)$ vanishes if  at least two indices $\alpha(i_1), \alpha(i_2), \alpha(i_3), \alpha(i_4)$ coincide. Otherwise, it equals to 
\[
\left[x_{\alpha(i_1)},x_{\alpha(i_2)},x_{\alpha(i_3)},x_{\alpha(i_4)}\right]\wedge\bigl(1-\left[x_{\alpha(i_1)},x_{\alpha(i_2)},x_{\alpha(i_3)},x_{\alpha(i_4)}\right]\bigr).
\]
Thus 
$
f\left(\textup{CL}_{2} \left(\mathfrak{M}_{0,m_1+1}\right) \right) \subseteq \textup{CL}_{2} \left(\mathfrak{M}_{0,m_2+1}\right),
$
so integrable symbols are mapped to integrable symbols and
$\mathcal{C}_n(m)$ is endowed with a structure of a cocyclic vector space.

By Proposition \ref{PropositionSymbolQuadrangular}, the symbol is a map of cocyclic vector spaces
\[
\mathcal{S}\colon \mathcal{Q}_n(m) \lra \mathcal{C}_n(m).
\] To prove Theorem \ref{TheoremClusterTypeA}, we need to show that $\mathcal{S}$ is an isomorphism. Lemma \ref{LemmaEqualityOfSymbolsImpliesConstant} implies that $\mathcal{S}$ is injective, so it is sufficient to check that 
\be \label{FormulaEqualityDimensions}
\textup{dim}\left(\mathcal{Q}_n(m)\right)\geq \textup{dim}\left(\mathcal{C}_n(m)\right) \quad \text{for}\quad m\geq 0.
\ee

Consider the projection map 
\[
\textup{pr}_m\colon\mathcal{C}_n(m)\lra \textup{CoLie}_n\bigl(\langle f_0,\dots,f_{m-1} \rangle\bigr)
\]
defined in \S \ref{SectionIIonConfSpace}. Clearly, $\textup{pr}_m$ vanishes on the subspace $\delta_m \mathcal{C}_n(m-1) \subseteq \mathcal{C}_n(m).$ Elements in the image of $\textup{pr}_m$ satisfy the following two properties. First, they are invariant under transformations $T_f$ sending each $f_i$ to $f_i+f$ for  $f\in \textup{CoLie}_1.$
Second,  they can be expressed as linear combinations of  $f_{i_1}\otimes \dots \otimes f_{i_n}$ with $i_1\leq i_2\leq \dots \leq i_n$. Denote the subspace  $\textup{CoLie}_n\bigl(\langle f_0,\dots,f_{m}\rangle\bigr)$ consisting of elements satisfying these two properties  by $\textup{Inv}_n(m).$
We get the following exact sequence: 
\be \label{ExactSequenceInv}
0\lra \partial_n \mathcal{C}_n(m-1) \lra \mathcal{C}_n(m)\stackrel{p}{\lra} \textup{Inv}_n(m).
\ee

Here is an example. The space $\textup{Inv}_2(3)$ is spanned by one element
\[
a_2(2)=[f_0|f_1]-[f_0|f_2]+[f_1|f_2].
\]
To see that $a_2(2)\in\textup{Inv}_2(2)$, it is sufficient to check that $a_2(3)$ is invariant under $T_f:$
\begin{align*}
    T_f&([f_0|f_1]-[f_0|f_2]+[f_1|f_2])\\
    &=[f_0+f|f_1+f]-[f_0+f|f_2+f]+[f_1+f|f_2+f]\\
    &=[f_0|f_1]-[f_0|f_2]+[f_1|f_2]+[f|f_1]+[f_1|f]+[f|f]\\  
    &=[f_0|f_1]-[f_0|f_2]+[f_1|f_2].\\
\end{align*}

\begin{lemma}\label{LemmaConormalizedInvariantPolynomials}
The dimension of the space $\textup{Inv}_n(m)$ is equal to 
\[
{m \choose 2}+{m \choose 3}+\dots+{m \choose n}.
\]
\end{lemma}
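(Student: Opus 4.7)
The strategy is to decouple conditions (a) and (b) via a convenient change of basis, and then perform a dimension count by induction on $m$.

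I introduce the basis $\{f_0, h_1, \dots, h_m\}$ of $V = \langle f_0, \dots, f_m\rangle$ with $h_i := f_i - f_{i-1}$. Under $T_f$ only $f_0$ shifts while each $h_i$ is fixed, so condition (a) is equivalent to $\omega$ lying in the image of $\textup{CoLie}_n(\langle h_1, \dots, h_m\rangle) \hookrightarrow \textup{CoLie}_n(V)$. Writing $f_i = f_0 + H_i$ with $H_i := h_1 + \dots + h_i$, condition (b) translates, for translation-invariant $\omega$, into the requirement that $\omega$ admit a representative in the span of the nondecreasing $H$-tensors $[H_{i_1}|\dots|H_{i_n}]$ with $1 \leq i_1 \leq \dots \leq i_n \leq m$, modulo shuffles: the $f_0$-containing parts of the naive nondecreasing expansion cancel precisely when the remaining $H$-part is itself nondecreasing.

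I handle the base cases directly. For $n = 2$, condition (b) is automatic (since $\textup{CoLie}_2 \cong \Lambda^2$), and translation invariance of $\sum_{i<j} c_{ij}\, f_i \wedge f_j$ yields the system $\sum_{j} c_{ij} = 0$ indexed by each generator $f_i$, of which $m$ are independent, giving $\dim \textup{Inv}_2(m) = \binom{m+1}{2} - m = \binom{m}{2}$. For $m \leq 1$ the space vanishes since $\textup{CoLie}_n$ on at most one generator is zero in weights $n \geq 2$.

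For the inductive step on $m$, I use the natural inclusion $\iota\colon \textup{Inv}_n(m-1) \hookrightarrow \textup{Inv}_n(m)$ of elements whose nondecreasing representative avoids $f_m$. By Pascal's identity the target difference is $\sum_{k=2}^n \bigl[\binom{m}{k} - \binom{m-1}{k}\bigr] = \sum_{j=1}^{n-1} \binom{m-1}{j}$. I realize this via a \emph{top-slice} map $\tau\colon \textup{Inv}_n(m) \to \bigoplus_{j=1}^{n-1} X^{(j)}$, where $X^{(j)}$ records the coefficients of those nondecreasing representatives whose last $j$ positions equal $H_m$ (equivalently, $f_m$ in $f$-coordinates). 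One then shows $\ker \tau = \iota\,\textup{Inv}_n(m-1)$, surjectivity of $\tau$, and $\dim X^{(j)} = \binom{m-1}{j}$, closing the induction.

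The main technical obstacle is twofold: verifying well-definedness of $\tau$ modulo shuffles, and pinning down the dimension of each slice. Well-definedness reduces to showing that two nondecreasing representatives of the same class in $\textup{CoLie}_n$ must agree in their trailing-$H_m$ count; this follows from the observation that a shuffle of nondecreasing tensors is generically not nondecreasing (a clean statement in terms of Lyndon words, which distinguish nondecreasing classes in $\textup{CoLie}_n$). For the count, $X^{(j)}$ embeds into the space of nondecreasing weight-$(n-j)$ tensors in $\langle H_1, \dots, H_{m-1}\rangle$, and the extra translation-invariance constraints coming from the $j$ trailing copies of $H_m$ force the supporting indices to be pairwise distinct, leaving exactly the $j$-element subsets of $\{1, \dots, m-1\}$ as the indexing set, i.e.\ $\binom{m-1}{j}$ degrees of freedom.
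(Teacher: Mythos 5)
Your reduction of the two defining conditions to ``$\omega$ is a linear combination of nondecreasing $H$-tensors'' is the crux of the argument, and it is false in the direction you need. Translation invariance does place $\omega$ in $\textup{CoLie}_n(\langle h_1,\dots,h_m\rangle)$, but the converse translation of condition (b) fails: a nondecreasing combination of the $H_i=f_i-f_0$ need not be expressible as a nondecreasing combination of the $f_i$ modulo shuffles. Concretely, take $m=2$, $n=3$. The nondecreasing $H$-tensors give the classes of $[H_1|H_1|H_2]$ and $[H_1|H_2|H_2]$, which are linearly independent in $\textup{CoLie}_3(\langle H_1,H_2\rangle)$ (they are the two Lyndon words on two letters in degree three), so your characterization yields a $2$-dimensional space; the lemma asserts $\binom{2}{2}+\binom{2}{3}=1$. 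Thus at least one of these two classes violates condition (b), and your equivalence overcounts. The symptom of this error surfaces later in your own write-up: having (correctly, in one direction) absorbed translation invariance into the passage to $H$-coordinates, you then invoke ``extra translation-invariance constraints coming from the $j$ trailing copies of $H_m$'' to cut $X^{(j)}$ down to $\binom{m-1}{j}$ --- but there is no residual translation invariance left to impose at that stage, and the count itself is inconsistent ($j$-element subsets of $\{1,\dots,m-1\}$ indexing a slice that lives in weight $n-j$). The constraint that actually does the cutting is membership in the span of nondecreasing $f$-tensors, which is precisely what got lost in the change of variables. The well-definedness of $\tau$ is also only gestured at (``generically not nondecreasing'' is not an argument), though that part is repairable via the Lyndon-basis fact that nondecreasing words with at least two distinct letters are independent in $\textup{CoLie}_n$.

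For contrast, the paper keeps both conditions in the original coordinates and exploits the cosimplicial structure of $m\mapsto\textup{Inv}_n(m)$: it identifies the span of nondecreasing tensors with the degree-$n$ part of $\mathbb{Q}[t_0,\dots,t_m]/(t_0^n,\dots,t_m^n)$, shows that lying in the kernel of all codegeneracies forces divisibility by $(t_0-t_1)\cdots(t_{m-1}-t_m)$, and only then applies translation invariance (via the unique linear relation among the insertions of a generic letter into a word) to pin the normalized piece down to the single generator $a_n(m)$; Dold--Kan then produces $\sum_{k=2}^{n}\binom{m}{k}$. If you want to salvage an induction on $m$, you would need to characterize the image of condition (b) inside $\textup{CoLie}_n(\langle H_1,\dots,H_m\rangle)$ correctly, which is essentially the content you skipped.
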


\begin{proof}
It is easy to see that the sequence of vector spaces   $\textup{Inv}_n(m)$ for $m\geq 0$
is a cosimplicial vector space: a  nondecreasing map  $\alpha\colon [m_1] \lra [m_2]$ induces the corresponding map  
\be\label{FormulaDoldKan}
\alpha[f_{i_1}| \dots | f_{i_n}]=[f_{\alpha(i_1)}| \dots | f_{\alpha(i_n)}].
\ee
So, it is sufficient to prove that  $CN\left(\textup{Inv}_n\right)^m$
has dimension one for $2\leq m\leq n$ and zero otherwise. After that, the statement would follow from (\ref{FormulaDoldKanSum}).

We prove a more precise statement:  the space $CN\left(\textup{Inv}_n\right)^m$ is generated by an element
\[
\sum_{n_0+\dots+n_m=n-m}f_0^{\otimes n_0}\otimes (f_0-f_1)\otimes f_1^{\otimes n_1}\otimes (f_1-f_2)\otimes f_2^{\otimes n_2}\otimes \dots \otimes (f_{m-1}-f_m)\otimes  f_m^{\otimes n_m},
\]
which we denote $a_n(m).$
First, we show that $a_n(m)$ lies in the space $CN\left(\textup{Inv}_n\right)^m.$ The only nontrivial part is to show that $a_n(m)$ is invariant under $T_f.$ That follows from the following identity in the tensor algebra, which can be easily checked:
\[
T_f (a_n(m))=a_n(m)+f\shuffle a_{n-1}(m)+\dots+f^{\otimes  (n-m)}\shuffle a_{m}(m)
\]
It follows that in the Lie coalgebra, we have $T_f (a_n(m))=a_n(m)$.

Next, consider a subspace $W_n$ of  $\textup{CoLie}_n\bigl(\langle f_0,\dots,f_{m}\rangle\bigr)$ spanned by elements  $[f_{i_1}| \dots | f_{i_n}]$ with $i_1\leq \dots \leq i_n$.
By the theory of Lyndon bases, $f_i^{\otimes n}=0$ and elements
\[
f_{0}^{\otimes n_0}\otimes f_{1}^{\otimes n_1} \otimes \dots \otimes f_{m}^{\otimes n_m}
\]
with at least two nonzero $n_i$'s are linearly independent. Thus $W_n$ is isomorphic to the degree $n$ graded component of the ring
\[
\dfrac{\mathbb{Z}[t_0,t_1,\dots,t_m]}{(t_0^n,\dots,t_m^n)}.
\]
The degeneracy maps $\sigma_i\colon W_n\lra W_{n-1}$
are defined by formulas 
\[
\sigma_i\bigl(P(t_0,\dots,t_m)\bigr)=P(t_0,\dots,t_{i},t_{i},\dots,t_{m-1}).
\]
Thus $\sigma_i(P)=0$ if and only if $(t_{i}-t_{i+1})|P.$ It follows that an element of $W$ lies in the kernel of all degeneracy maps if and only if it is divisible by $(t_0-t_1)(t_1-t_2)\dots (t_{m-1}-t_{m}).$ Thus every element in $CN\left(\textup{Inv}_n\right)^m$ has a form
\[
\sum_{n_0+\dots+n_m=n-m} a_{n_0,\dots,n_m}f_0^{\otimes n_0}\otimes (f_0-f_1)\otimes f_1^{\otimes n_1}\otimes (f_1-f_2)\otimes \dots \otimes (f_{m-1}-f_m)\otimes  f_m^{\otimes n_m}.
\]
 We have
\[
T_f\Bigl(f_i^{\otimes n_i}\Bigr)=f_i^{\otimes n_i}+ f \shuffle f_i^{\otimes (n_i-1)}+ \Bigl( \text{ terms with at least two $f$'s.}\Bigr)
\]
Consider elements
$
X_i=x_1\dots x_{i}x x_{i+1}\dots x_n 
$
for $0\leq i\leq n$, which lie in a cofree Lie coalgebra. It is easy to see that if $x\notin \langle x_1,\dots,x_m\rangle$ then $X_i$ satisfy a unique equation up to rescaling, namely, $\sum_{i=0}^{n} X_i=0.$  So, if $a$ is invariant under translations then 
$a_{n_0,\dots, n_i-1,\dots, n_j+1,\dots,n_m}=a_{n_0,\dots,n_m}$
for any $i < j.$ This implies that all coefficients $a_{n_0,\dots,n_m}$ are the same, from where the statement follows.
\end{proof}

We are ready to finish the proof of Theorem \ref{TheoremClusterTypeA}. From Lemma \ref{LemmaConormalizedInvariantPolynomials} and (\ref{ExactSequenceInv}) it follows that 
\[
\textup{dim}\left( \mathcal{C}_n(m)\right)-\textup{dim}\left( \mathcal{C}_n(m-1)\right)\leq {m-1 \choose 2}+{m-1 \choose 3}+\dots+{m-1 \choose n},
\]
so 
\[
\textup{dim}\left( \mathcal{C}_n(m)\right)\leq {m \choose 3}+{m \choose 4}+\dots+{m \choose n+1}=\textup{dim}\left( \mathcal{Q}_n(m)\right).
\]
So, we have proved (\ref{FormulaEqualityDimensions}), which implies Theorem \ref{TheoremClusterTypeA}. We also obtain the following corollary.

\begin{corollary}\label{CorollaryProjectiveInvariantII}
The subspace of the space $\textup{L}^+_\bullet(\textup{Conf}_{m+2})$  consisting of elements which are invariant under projective transformations is spanned by quadrangular polylogarithms. 
\end{corollary}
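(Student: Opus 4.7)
The corollary asserts that the projective-invariant subspace $\mathcal{P}_n(m+1) \subseteq \textup{L}^+_n(\textup{Conf}_{m+2})$ coincides with the image $\mathcal{S}(\mathcal{Q}_n(m+1))$ of the space of quadrangular polylogarithms. One inclusion is essentially in hand: quadrangular polylogarithms are projectively invariant (Proposition 5.7 of \cite{Rud20}, derivable from the coproduct formula \eqref{ClusterPolylogarithmCoproduct}), and by Proposition \ref{PropositionSymbolQuadrangular} their symbols lie in $\textup{L}^+_n$. Combined with Theorem \ref{TheoremClusterTypeA}, this yields the lower bound
\[
\dim \mathcal{P}_n(m+1) \ \geq\ \dim \mathcal{S}(\mathcal{Q}_n(m+1))\ =\ \binom{m+1}{3}+\cdots+\binom{m+1}{n+1}.
\]

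The plan for the matching upper bound is to copy the cocyclic/Dold--Kan argument at the end of \S\ref{SectionSpaceClusterIntegrable}, replacing $\mathcal{C}_n$ by $\mathcal{P}_n$. The sequence $(\mathcal{P}_n(m))_{m}$ inherits a cocyclic structure from relabeling of points, and the projection $\textup{pr}_{m+1}$ sends $\mathcal{P}_n(m+1)$ into $\textup{Inv}_n(m+1)$. The nondecreasing-indices half of the definition of $\textup{Inv}_n$ is automatic since $\mathcal{P}_n \subseteq \textup{L}^+_n$. The $T_f$-invariance half follows from projective invariance via the identification $\mathfrak{M}_{0,m+2} \cong \textup{Gr}(2,m+2)/T^{m+2}$ (modulo the scalar diagonal): projective invariance on $\textup{Conf}_{m+2}$ is exactly full $T^{m+2}$-invariance on the Plücker coordinates, which a fortiori implies invariance under the one-parameter subgroup rescaling the column indexed by $m+1$; under $\textup{pr}_{m+1}$ this last invariance is precisely $T_f$-invariance on $\textup{CoLie}_n(\langle f_0,\dots,f_m\rangle)$. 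Then the analogue of the exact sequence \eqref{ExactSequenceInv} holds for $\mathcal{P}_n$, and combining Lemma \ref{LemmaConormalizedInvariantPolynomials} with induction on $m$ delivers
\[
\dim \mathcal{P}_n(m+1) \ \leq\ \binom{m+1}{3}+\cdots+\binom{m+1}{n+1}.
\]

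Equating the two dimension bounds forces $\mathcal{P}_n(m+1) = \mathcal{S}(\mathcal{Q}_n(m+1))$, and lifting from symbols to polylogarithms using Lemma \ref{LemmaEqualityOfSymbolsImpliesConstant} completes the proof. The main technical point to nail down is the identification of the $T_f$-action with a specific subtorus action on the Grassmannian, which amounts to a direct computation on the generators $f_i = d\log(t_{m+1}-t_i)$ and their transformation under a projective rescaling fixing $t_{m+1}$; once this is verified, every remaining step is a verbatim rerun of the dimension calculation already carried out for $\mathcal{C}_n$.
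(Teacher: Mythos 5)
Your proposal is correct and matches the paper's intended argument: the paper derives this corollary as a byproduct of the dimension count in \S\ref{SectionSpaceClusterIntegrable}, since the upper bound via $\textup{pr}_{m+1}$ and $\textup{Inv}_n$ (Lemma \ref{LemmaConormalizedInvariantPolynomials} together with the exact sequence \eqref{ExactSequenceInv}) uses only the two properties you isolate --- $T_f$-invariance of the projection, which is exactly projective (torus) invariance, and expressibility with nondecreasing indices --- while the lower bound is saturated by the projectively invariant quadrangular polylogarithms. Your identification of $T_f$ with the rescaling of the column indexed by $m+1$ is the right way to make the first property precise, and the rest is indeed a verbatim rerun of the computation already done for $\mathcal{C}_n(m)$.
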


\section{Higher Gangl formula in weight six}\label{SectionHigherGangl}

\subsection{The depth conjecture} \label{SectionDepthConjecture}
Let $\F$ be a field and $(\L(\F),\Delta)$ --- Lie coalgebra of multiple polylogarithms. Assume that $\Delta=\sum_{1\leq i\leq j}\Delta_{ij}$ for  $\Delta_{ij}\colon \mathcal{L}_{i+j}(\mathrm{F}) \longrightarrow \mathcal{L}_i(\mathrm{F}) \wedge \mathcal{L}_j(\mathrm{F})$. The truncated coproduct is a map $\overline{\Delta}\colon \mathcal{L}(\mathrm{F})\longrightarrow \bigwedge^2\mathcal{L}(\mathrm{F})$ defined by the formula 
\[
\overline{\Delta}=\sum_{2\leq i \leq j} \Delta_{ij}.
\]
In other words, $\overline{\Delta}$ is obtained from $\Delta$ by omitting the component $\mathcal{L}_{1}(\mathrm{F}) \wedge \mathcal{L}_{n-1}(\mathrm{F})$ from the coproduct.

\begin{proposition}
For $k\geq 2$ iterated truncated coproduct 	$\overline{\Delta}^{[k-1]}$ vanishes on  $\mathcal{D}_{k-1}\L_\bullet(\F)$ and defines a map 
\[
\overline{\Delta}^{[k-1]}\colon \mathrm{gr}^\mathcal{D}_k\L(\F) \lra \textup{CoLie}_k\left(\bigoplus_{n\geq 2}\mathcal{B}_n(\F)\right).
\]
\end{proposition}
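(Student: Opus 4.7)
The plan is to deduce both assertions from a single \emph{depth bound} for the truncated coproduct:
\begin{equation}\label{eq:depthbound}
\overline{\Delta}\bigl(\mathcal{D}_d\L_n(\F)\bigr)\subseteq \sum_{\substack{i+j=n,\, i,j\geq 2\\d_1+d_2\leq d,\,d_1,d_2\geq 1}}\mathcal{D}_{d_1}\L_i(\F)\wedge \mathcal{D}_{d_2}\L_j(\F).
\end{equation}
The lower bounds $d_1,d_2\geq 1$ are automatic because $\overline{\Delta}$ lands in $\bigoplus_{i+j=n,\,i,j\geq 2}\L_i\wedge \L_j$ and the depth filtration on $\L_m(\F)$ starts at $\mathcal{D}_1$ for $m\geq 2$; the real content of \eqref{eq:depthbound} is the inequality $d_1+d_2\leq d$.

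Granting \eqref{eq:depthbound}, both claims follow by induction on $k$, using the inductive definition $\overline{\Delta}^{[k-1]}=\pi\circ(\overline{\Delta}^{[k-2]}\otimes \textup{id})\circ\overline{\Delta}$, where $\pi$ denotes the projection $\textup{CoLie}_{k-1}(\L)\otimes \L\to \textup{CoLie}_k(\L)$. For $L\in \mathcal{D}_d\L(\F)$ one obtains that $\overline{\Delta}^{[k-1]}(L)$ is representable as a sum of $k$-fold tensors whose factor depths $d_1,\dots,d_k$ all satisfy $d_i\geq 1$ and $d_1+\dots+d_k\leq d$. If $d=k-1$ no such tuple of positive integers exists, so $\overline{\Delta}^{[k-1]}$ vanishes on $\mathcal{D}_{k-1}\L(\F)$, which is the first assertion. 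If $d=k$ the only admissible tuple is $d_i=1$ for every $i$; each factor then lies in $\mathcal{B}_{n_i}(\F)$ for some $n_i\geq 2$, and the induced map from $\mathrm{gr}^{\mathcal{D}}_k\L(\F)$ factors through $\textup{CoLie}_k\bigl(\bigoplus_{n\geq 2}\mathcal{B}_n(\F)\bigr)$, giving the second.

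It remains to establish \eqref{eq:depthbound}, and this is where the real work lies. The strategy is to verify the bound on a generating set. Every element of $\mathcal{D}_d\L_n(\F)$ is a combination of correlators $\textup{Cor}^\L(x_0,\dots,x_n)$ (equivalently, via (\ref{FormulaCorII}), of iterated integrals $\textup{I}^\L$) whose underlying multiple polylogarithm has at most $d$ non-trivial arguments. By formula (\ref{FormulaCoproductCorrelators}) the cobracket decomposes as a cyclic sum of slicings $\textup{Cor}^\L(x_0,x_1,\dots,x_i)\wedge \textup{Cor}^\L(x_0,x_{i+1},\dots,x_n)$ of weights $i$ and $n-i$. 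Precisely the slicings with $i=1$ or $i=n-1$ produce a weight-one factor and thus contribute to the $\L_1\wedge\L_{n-1}$ summand that is discarded in $\overline{\Delta}$; these are exactly the slicings whose combined depth can exceed $d$. For the surviving slicings, each side absorbs only a sub-collection of the non-trivial entries of the original correlator, so the depths of the two slices sum to at most $d$. To pass from $\mathcal{A}_n(\F)$ to the quotient $\L_n(\F)$ in (\ref{FormulaQuotient}), one uses that specialization respects the depth filtration (\S 2.2), which ensures that the defining relations $\mathrm{Sp}_{t\to 0}(R(t))-\mathrm{Sp}_{t\to \infty}(R(t))$ do not move elements out of $\mathcal{D}_d$.

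The principal obstacle is the combinatorial bookkeeping that ties the cyclic slicing formula to a precise depth count in full generality, in particular for degenerate configurations where several $x_i$ coincide or vanish. Confirming that the excised $\L_1\wedge \L_{n-1}$-component is precisely the locus where depth-violating slicings concentrate, rather than just in a generic situation, is the core of the argument, and I expect a careful induction on weight, combined with the compatibility of specialization with depth, to carry it through.
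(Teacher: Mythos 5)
Your reduction of the proposition to a single depth--subadditivity bound for $\overline{\Delta}$, followed by a formal induction on $k$, is a reasonable repackaging of what the paper does, and the formal induction itself is fine. The gap is in the justification of the bound, and it sits exactly where the paper concentrates its effort. In the cobracket formula --- whether in the correlator form $\sum_{\cyc}\sum_i\textup{Cor}^{\L}(x_0,\dots,x_i)\wedge\textup{Cor}^{\L}(x_{0},x_{i+1},\dots,x_n)$ or in the iterated-integral form $\sum_{i<j}\textup{I}^{\L}(x_i;x_{i+1},\dots,x_{j-1};x_j)\wedge\textup{I}^{\L}(x_0;x_i,\dots,x_j;x_{n+1})$ --- the slicing points appear in \emph{both} factors. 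So your assertion that ``each side absorbs only a sub-collection of the non-trivial entries'' is not true as stated: a nonzero $x_i$ or $x_j$ is counted on both sides, and the naive count allows the two depths to exceed $d$ by the number of shared nonzero entries. The paper's proof is devoted precisely to this point: it isolates the cases $x_i\neq 0$ and/or $x_j\neq 0$ and uses shuffle and path-composition relations to show that $\textup{I}^{\L}(0;0,\dots,0,a,0,\dots,0;b)$ and $\textup{I}^{\L}(a;0,\dots,0,b,0,\dots,0;c)$ have depth at most one. That is the actual content of the statement; your remark that the depth-violating slicings are ``precisely'' those producing a weight-one factor is the crux, not an observation, and it is left unproven.

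A second, related problem is that the sharp term-by-term form of your bound is doubtful. A slicing that excises a block of zeros adjacent to the basepoint produces a first factor $\textup{I}^{\L}(0;0,\dots,0;x_j)=\textup{Cor}^{\L}(0,\dots,0,1)$ of weight $m\geq 2$, a constant of $\zeta$-value type lying in $\mathcal{B}_m$, while the complementary factor can retain the full depth $d$ of the original element. Such a single term does not satisfy $d_1+d_2\leq d$ with $d_1\geq 1$; and if one instead declares these constants to have depth $0$, then your ``automatic'' lower bound $d_i\geq 1$ fails, and with it the vanishing step of your induction. The paper's claim --- if the outer factor has depth at least $k-1$, then the inner factor has depth at most one --- is deliberately asymmetric and weaker, and it is satisfied term by term even by these degenerate slicings. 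To repair your argument you would need either to replace the clean subadditivity statement by the paper's calibrated one, or to prove cancellation among the constant terms; in either case the shuffle-relation analysis you defer to ``combinatorial bookkeeping'' has to be carried out, since it is the heart of the proof.
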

\begin{proof}
Recall the formula for the coproduct of an iterated integral:
\[
\begin{split}
	&\Delta^{\mathcal{L}} \textup{I}^{\mathcal{L}}(x_0;x_1,\dots,x_n;x_{n+1})
	=\sum_{i<j} \textup{I}^{\mathcal{L}}(x_{i};x_{i+1},\dots,x_{j-1};x_{j})\wedge  \textup{I}^{\mathcal{L}}(x_{0};x_{i},\dots, x_{j} ; x_{n+1}).
\end{split}
\]
 We can assume that $x_0=0$ and $x_{n+1}\neq0$. If the iterated integral 
$\textup{I}^{\mathcal{L}}(x_0;x_1,\dots,x_n;x_{n+1})$
has depth $k,$ then the number of nonzero terms in the  sequence $x_0,\dots,x_{n+1}$ is at most $k+1.$ It is sufficient to show that if the  iterated integral  $\textup{I}^{\mathcal{L}}(x_{0};x_{i},\dots, x_{j} ; x_{n+1})$ has depth at least $k-1$ then the  iterated integral  $\textup{I}^{\L}(x_{i};x_{i+1},\dots,x_{j-1};x_{j})$ has depth at most one. 

The number of nonzero terms in the sequence $x_0,x_i,\dots,x_j,x_{n+1}$ is at least k. If $x_i=x_j=0,$ there is nothing to prove. If  $x_i\neq 0, x_j=0$ or $x_j\neq 0, x_i=0,$  the number of nonzero terms in the sequence $x_i,\dots,x_j$ is at most two. Shuffle relations imply that the iterated integral
\[
\textup{I}^{\mathcal{L}}(0; 0,\dots,0,a,0,\dots,0;b)
\]
has depth at most one, from where the statement follows. If $x_i\neq 0, x_j\neq 0,$ the number of nonzero terms in the sequence $x_i,\dots,x_j$  is at most three and the statement follows from the fact that 
\[
\textup{I}^{\mathcal{L}}(a; 0,\dots,0,b,0,\dots,0;c)=\textup{I}^{\mathcal{L}}(a; 0,\dots,0,b,0,\dots,0;0)+\textup{I}^{\mathcal{L}}(0; 0,\dots,0,b,0,\dots,0;c)
\]
has depth at most one. 
\end{proof}

Now we are ready to formulate a precise version of Conjecture \ref{ConjectureDepth}.

\begin{conjecture}[Depth conjecture] For $k\geq 2$ the map
\[
\overline{\Delta}^{[k-1]}\colon \mathrm{gr}^\mathcal{D}_k\L(\F) \lra \textup{CoLie}_k\left(\bigoplus_{n\geq 2}\mathcal{B}_n(\F)\right)
\]
is an isomorphism.
\end{conjecture}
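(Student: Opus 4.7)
The plan is to establish the statement by a double induction: on the weight $n$, and within each weight on the depth $k$. The preceding proposition already shows that $\overline{\Delta}^{[k-1]}$ descends to $\mathrm{gr}^{\mathcal{D}}_k\L(\F)$, so what remains is to verify injectivity and surjectivity. Surjectivity should be the more tractable half: given a cofree tensor built from Bloch-group elements $\xi_j=\Li_{n_j}^{\L}(a_j)\in \mathcal{B}_{n_j}(\F)$, the coproduct formula (\ref{FormulaCoproductCorrelators}) shows that a suitable combination of multiple polylogarithms of the form $\Li^{\L}_{n_0;n_1,\ldots,n_k}(a_1,\ldots,a_k)$ maps precisely to this tensor under $\overline{\Delta}^{[k-1]}$, modulo lower-weight corrections that can be absorbed inductively.

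The core of the problem is injectivity, which carries the genuine content of the conjecture: if the truncated iterated coproduct of a combination vanishes then the combination already lies in $\mathcal{D}_{k-1}\L(\F)$. I would argue by induction on the weight, with the ``unobstructed'' range $n\leq 2k$ handled by \cite[Theorem~1.1]{Rud20}. For the ``obstructed'' range, the strategy mirrors the route to Theorem \ref{TheoremDepth6}: apply the cluster equation (\ref{FormulaEquationQuadrangular}) on $\mathfrak{M}_{0,N}$ for a suitably chosen $N$, expand $\textup{QLi}_n$ through the quadrangulation formula (\ref{FormulaQuadrangulationFormula}) into genuine multiple polylogarithms, and read off a depth-reducing functional identity. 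The first obstructed case $n=2k+2$ is precisely Conjecture \ref{ConjectureDepthObstr}; the case $(n,k)=(6,3)$ is Theorem \ref{TheoremDepth6}, which reduces everything modulo a single Zagier-type residual term (\ref{FormulaZagierWeightSix}) that itself falls to depth at most two after a further identity.

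The principal obstacle lies beyond the first obstructed case. For $n=2k+2m$ with $m\geq 2$, quadrangular polylogarithms no longer suffice: by Theorem \ref{TheoremClusterTypeA} the type-$A$ cluster varieties $\mathfrak{M}_{0,N}$ generate only a controlled amount of depth reduction at the top weight. The natural next step is to invoke cluster polylogarithms on higher-rank finite-type cluster varieties of types $D$ and $E$, whose existence is suggested by the experimental evidence flagged after Theorem \ref{TheoremClusterTypeA}. One would have to establish the analog of Theorem \ref{TheoremClusterTypeA} for these types, and then verify that the resulting equations, combined with suitable quadrangulation-type expansions, realize \emph{every} element in the kernel of $\overline{\Delta}^{[k-1]}$. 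Even granting such classifications, sufficiently high weight brings in cluster algebras of infinite type, where the present methods give out entirely; resolving that range looks to require a conceptually new ingredient, perhaps the ``quantum mixed Tate motives'' alluded to in \S\ref{SectionIntroductionPolylogarithms}.
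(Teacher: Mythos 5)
The statement you are trying to prove is stated in the paper as a \emph{conjecture} (the Depth Conjecture, Goncharov's \cite[Conjecture 7.6]{Gon01}); the paper offers no proof of it and does not claim one. There is therefore no proof to compare your attempt against, and your proposal --- candidly --- is not a proof either: it is a research program whose final paragraph concedes that the method ``gives out entirely'' beyond low weight. The honest summary of what is actually known, and what the paper itself records, is: surjectivity of $\overline{\Delta}^{[k-1]}$ is proved in \cite{CGRR22}; both sides vanish for $2k>n$ by \cite[Theorem 1.1]{Rud20}; the critical case $n=2k$ reduces to Conjecture \ref{ConjectureDepthObstr}, which splits into the Higher Zagier formulas (Conjecture \ref{ConjectureHigherZagier}) and the Higher Gangl formula (Conjecture \ref{ConjectureHigherGangl}); the case $k=2$ is Gangl's theorem \cite{Gan16}; and the present paper proves only the Gangl half for $k=3$. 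Everything else --- Higher Zagier for $k\geq 3$, Higher Gangl for $k\geq 4$, and the entire range $n>2k+2$ --- is open.

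Beyond the global issue, there is one concrete overclaim in your sketch: you assert that the residual term (\ref{FormulaZagierWeightSix}) ``itself falls to depth at most two after a further identity.'' It does not, as far as anyone knows: that assertion is exactly the Higher Zagier formula for $k=3$, which the paper explicitly states is open for $k\geq 3$. Theorem \ref{TheoremDepth6} is deliberately phrased to express the alternating sum via depth-two terms \emph{and} the unreduced combination (\ref{FormulaZagierWeightSix}); it does not complete the first obstructed case in weight six. Your surjectivity paragraph is fine (and matches \cite{CGRR22}), and your identification of where the difficulty lies --- that equation (\ref{FormulaEquationQuadrangular}) together with the quadrangulation formula exhausts what type-$A$ cluster varieties can give, per Theorem \ref{TheoremClusterTypeA}, so new cluster structures or a genuinely new idea are needed --- is an accurate diagnosis. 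But a diagnosis of why the conjecture is hard is not a proof of it, and you should not present the statement as provable by the outlined route.
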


The surjectivity of the map $\overline{\Delta}^{[k-1]}$ is proved in \cite{CGRR22}. It is easy to see that $\overline{\Delta}^{[k-1]}$ vanishes on $\L_n$  for $2k>n.$  In \cite[Theorem 1]{Rud20} it is proved that $\mathrm{gr}^\mathcal{D}_k\L_{n}(\F)=0$ for $2k>n.$

In what follows, we assume that $n=2k.$ The weight $2k$ component of the Lie coalgebra
$
\textup{CoLie}_k\left(\bigoplus_{n\geq 2}\mathcal{B}_n(\F)\right)
$
equals to
$\textup{CoLie}_k\left(\mathcal{B}_2(\F)\right).$ In this case, the depth conjecture states that the map 
\be \label{FormulaIteratedCoproductCritical}
\overline{\Delta}^{[k-1]}\colon \mathrm{gr}^\mathcal{D}_k\L_{2k}(\F)\lra \textup{CoLie}_k\left(\mathcal{B}_2(\F)\right)
\ee
is an isomorphism. 

Map (\ref{FormulaIteratedCoproductCritical}) is surjective. Indeed,
\[
	\overline{\Delta}^{[k-1]} \Li_{k;1,\dots,1}^{\L}(a_1,\dots,a_k)=\Li_2^{\L}(a_1)\otimes\dots\otimes  \Li_2^{\L}(a_k),
\]
and such elements span $\textup{CoLie}_k\left(\mathcal{B}_2(\F)\right).$ To prove injectivity, we need to construct a map in the other direction. Proposition \ref{PropositionWeightTwo} implies that there exists a presentation of the Bloch group $\mathcal{B}_2(\F)$ by generators and relations:
\[
0\lra \mathrm{R}_2(\F)\lra \Q[\F^{\times}]\lra \mathcal{B}_2(\F)\lra 0,
\]
where $\{a\}\in \Q[\F^{\times}]$ is mapped to $\Li_2^\L(a)$ and $\mathrm{R}_2(\F)$ is generated by elements
\[
\sum_{i=0}^4 (-1)^i\bigl\{[x_0,\dots,\widehat{x_i},\dots,x_4]\bigr\}\in \Q[\F^{\times}].
\]
It is easy to see that we have an exact sequence
\[
\mathrm{R}_2(\F) \otimes \textup{CoLie}_{k-2}\left(\Q[\F^{\times}]\right)\lra \textup{CoLie}_{k}\left(\Q[\F^{\times}]\right)\lra \textup{CoLie}_k\left(\mathcal{B}_2(\F)\right)\lra 0.
\]

We define a map 
\be \label{FormulaInverseMap}
\mathcal{I}\colon \textup{CoLie}_k\left(\Q[\F^{\times}]\right)\lra  \mathrm{gr}^\mathcal{D}_k\L_{2k}(\F)
\ee
by the  formula
$
\mathcal{I}(\{a_1\}\otimes \dots \otimes\{a_k\} )=\Li_{k;1,\dots,1}^{\L}(a_1,\dots,a_k).
$
This map is well-defined because of the quasi-shuffle relations for multiple polylogarithms (\cite[Proposition 3.10]{Rud20}). To prove injectivity of  (\ref{FormulaIteratedCoproductCritical}) it remains to show that $\mathcal{I}$ vanishes on the space 
$\mathrm{R}_2(\F) \otimes \textup{CoLie}_{k-2}\left(\Q[\F^{\times}]\right).$ Thus, the depth conjecture for $n=2k$ is equivalent to Conjecture \ref{ConjectureDepthObstr}.

We break Conjecture \ref{ConjectureDepthObstr} into two parts, generalizing the formulas of Zagier and the formula of Gangl in weight four.
\begin{conjecture}[Higher Zagier formulas]\label{ConjectureHigherZagier}
Elements 
\begin{align}
&\Li_{k;1,1,\dots,1}^{\L}(a_1,a_2\dots,a_k)+\Li_{k;1,1,\dots,1}^{\L}(1-a_1,a_2\dots,a_k)\label{FormulaZagier1}\in \L_{2k}\\
&\Li_{k;1,1,\dots,1}^{\L}(a_1,a_2\dots,a_k)+\Li_{k;1,1,\dots,1}^{\L}\left(\frac{1}{a_1},a_2\dots,a_k\right)\in \L_{2k}\label{FormulaZagier2}
\end{align}
have depth at most $k-1.$
\end{conjecture}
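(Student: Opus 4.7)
The plan is to first verify the necessary cohomological condition and then attempt to construct an explicit depth reduction using the quadrangular polylogarithm machinery developed in this paper.

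I would begin by checking the necessary condition. The formula
\[
\overline{\Delta}^{[k-1]} \Li_{k;1,\dots,1}^{\L}(a_1,\dots,a_k) = \Li_2^{\L}(a_1)\otimes \dots \otimes \Li_2^{\L}(a_k),
\]
recorded in \S \ref{SectionDepthConjecture}, combined with the identities $\Li_2^{\L}(1-a) = -\Li_2^{\L}(a)$ and $\Li_2^{\L}(a^{-1}) = -\Li_2^{\L}(a)$ in $\mathcal{B}_2(\F)$ (derived after Proposition \ref{PropositionWeightTwo}), immediately shows that $\overline{\Delta}^{[k-1]}$ annihilates both sums (\ref{FormulaZagier1}) and (\ref{FormulaZagier2}). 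This is a necessary condition for having depth at most $k-1$; under the full depth conjecture it would even be sufficient, so the content of the statement is to give an unconditional, explicit depth-$(k-1)$ presentation.

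For the constructive direction I would follow the template of Theorem \ref{TheoremDepth6}: search for a suitable specialization of the master functional equation (\ref{FormulaEquationQuadrangular}) and combine it with the quadrangulation formula (\ref{FormulaQuadrangulationFormula}). Because $\textup{QLi}_{n+k'}(x_0,\dots,x_{2n+1})$ has depth at most $n$ by (\ref{FormulaQuadrangulationFormula}), an instance of (\ref{FormulaEquationQuadrangular}) at $n = k-1$ and some $N \geq k+1$, specialized so that several of the $x_i$ collide or equal $0, 1, \infty$ while the surviving arguments form rational combinations of $a_1,\dots,a_k$ and $1-a_1$ (respectively $a_1^{-1}$), would automatically produce an identity of the form \emph{(Zagier sum)} $=$ \emph{(quadrangular terms of depth $\leq k-1$)} $+$ \emph{(degenerate corrections)}. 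The base case $k=2$ is Zagier's classical identity for $\Li_{2,2}$, and one could also try induction on $k$ by expanding the coproduct components of $\Li_{k;1,\dots,1}^{\L}(a_1,\dots,a_k)$, which involve shorter Zagier-type sums of lower weight.

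The principal obstacle is identifying the correct specialization for general $k$. For the weight-six Gangl formula (Theorem \ref{TheoremDepth6}) the pair $(n,N) = (6,9)$ was used, and significantly the paper proves the Gangl side of the weight-six case but leaves the Zagier-type term (\ref{FormulaZagierWeightSix}) irreducible --- indeed this term is precisely an instance of Conjecture \ref{ConjectureHigherZagier}. This strongly suggests that the Zagier formulas are strictly stronger than what the type-$A$ cluster polylogarithms on $\mathfrak{M}_{0,m}$ produce via (\ref{FormulaEquationQuadrangular}) alone, and that a proof will likely require either extending the class of cluster polylogarithms to other finite types (as hinted at in \S \ref{SectionIntroductionClusterPolylogsOnConfigurationSpace}), or exploiting the symmetries $a \leftrightarrow 1-a$ and $a \leftrightarrow a^{-1}$ of $\mathcal{B}_2(\F)$ in a more structural way that is not directly visible in the master equation.
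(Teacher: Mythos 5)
You should note first that the paper itself does not prove this statement: it is stated as a conjecture, and the text immediately following it says explicitly that it ``was proved for $k=2$ by Zagier \dots and is open for $k\geq 3$.'' So there is no proof in the paper to compare against for general $k$, and your proposal, which you candidly present as a plan rather than a proof, cannot be judged as matching or missing ``the'' argument. Your verification of the necessary condition via $\overline{\Delta}^{[k-1]}$ and the antisymmetries $\Li_2^{\L}(1-a)=-\Li_2^{\L}(a)$, $\Li_2^{\L}(a^{-1})=-\Li_2^{\L}(a)$ is correct, and your diagnosis of the state of affairs is also correct: Theorem \ref{TheoremDepth6} deliberately leaves the term (\ref{FormulaZagierWeightSix}) unreduced precisely because it is the $k=3$ instance of this conjecture, which indicates that the master equation (\ref{FormulaEquationQuadrangular}) together with the quadrangulation formula is not known to yield the Zagier-type reductions.

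The one concrete thing worth adding to your plan is the mechanism the paper actually uses for $k=2$ in \S \ref{SectionHigherGanglFour}: there the Zagier formulas are derived not from a bare specialization of (\ref{FormulaEquationQuadrangular}), but from a configuration of six points $x_0,\dots,x_5$ admitting a projective involution $\psi$ with $\psi(x_i)=x_{i+3}$, combined with a Kummer-type equation for $\Li_3^{\L}$ and the quadrangulation formula. This extra involution symmetry is exactly the ``more structural way'' of exploiting $a\leftrightarrow 1-a$ and $a\leftrightarrow a^{-1}$ that you gesture at in your last paragraph but do not construct; any attempt to push your strategy to $k\geq 3$ would need a higher-dimensional analogue of that involutive configuration, and no such analogue is currently known. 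As written, your proposal establishes the necessary condition but not the statement, which remains open for $k\geq 3$.
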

Conjecture \ref{ConjectureHigherZagier} was proved for $k=2$ by Zagier (see \S \ref{SectionHigherGanglFour} for another approach) and is open for $k\geq 3.$ Next, denote by $\mathcal{G}_k$ the quotient of $\mathrm{gr}^\mathcal{D}_k\L_{2k}(\F)$ by the subspace spanned by elements 
(\ref{FormulaZagier1}) and (\ref{FormulaZagier2}). Clearly, $\overline{\Delta}^{[k-1]}$ vanishes on this subspace.
\begin{conjecture}[Higher Gangl formula]\label{ConjectureHigherGangl} 
The map 
\[
\overline{\Delta}^{[k-1]}\colon \mathcal{G}_k \lra \textup{CoLie}_k\left(\mathcal{B}_2(\F)\right)
\]
is an isomorphism. 
\end{conjecture}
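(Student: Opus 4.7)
The plan is to prove Conjecture \ref{ConjectureHigherGangl} by establishing injectivity of $\overline{\Delta}^{[k-1]}$ on $\mathcal{G}_k$; surjectivity is immediate since $\overline{\Delta}^{[k-1]}\Li^{\L}_{k;1,\dots,1}(a_1,\dots,a_k)=\Li_2^{\L}(a_1)\otimes\dots\otimes\Li_2^{\L}(a_k)$, and these tensors span $\textup{CoLie}_k(\mathcal{B}_2(\F))$. Using the candidate inverse $\mathcal{I}$ from (\ref{FormulaInverseMap}) and the presentation of $\mathcal{B}_2$, injectivity reduces to showing that for all $a_1,\dots,a_{k-1}\in \F^\times$ and $x_0,\dots,x_4\in \mathbb{P}^1_\F$ the alternating sum
\[
\sum_{i=0}^4(-1)^i\Li_{k;1,\dots,1}^{\L}\bigl(a_1,\dots,a_{k-1},[x_0,\dots,\widehat{x_i},\dots,x_4]\bigr)
\]
lies in the sum of $\mathcal{D}_{k-1}\L_{2k}(\F)$ and the higher Zagier subspace spanned by (\ref{FormulaZagier1})--(\ref{FormulaZagier2}). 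This is the analogue for general $k$ of Theorem \ref{TheoremDepth6}.

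The mechanism for producing such a depth reduction is the combination of Proposition \ref{PropositionMainFunctionalEquation} with the quadrangulation formula (\ref{FormulaQuadrangulationFormula}). I would take the functional equation (\ref{FormulaEquationQuadrangular}) with $n=2k$ and $N$ chosen so that the alternating sum has five free cross-ratio arguments, and specialize the remaining variables so that they encode the parameters $a_1,\dots,a_{k-1}$ through the standard iterated-integral presentation of $\Li^{\L}_{k;1,\dots,1}$ (i.e.\ inserting zeros and cumulative products $a_1\cdots a_j$ at the appropriate positions). Since $\textup{QLi}^{\textup{sym}}_{2k}$ is projectively invariant and lies in $\L^+_{2k}(\textup{Conf})$, this specialization is well-defined in $\L_{2k}(\F)$.

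Next, I would expand each $\textup{QLi}^{\textup{sym}}_{2k}$ via Lemma \ref{LemmaSymmetrizedQuadrangularPolylogarithm} into ordinary $\textup{QLi}_{2k}$, and then apply (\ref{FormulaQuadrangulationFormula}) to rewrite each $\textup{QLi}_{2k}(x_0,\dots,x_{2n+1})$ as a sum of multiple polylogarithms of weight $2k$ and depth at most $n$, with arguments that are products of cross-ratios of the quadrangles of a quadrangulation. Grouping the resulting identity by depth yields a decomposition into a part lying in $\mathcal{D}_{k-1}\L_{2k}(\F)$ and a depth-exactly-$k$ part. The goal is to show that the depth-$k$ part matches, term by term, the Gangl-type alternating sum above, modulo elements of the form $\Li_{k;1,\dots,1}^{\L}(b_1,\dots,b_k)+\Li_{k;1,\dots,1}^{\L}(1-b_1,b_2,\dots,b_k)$ and the $b\mapsto 1/b$ analogue.

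The hard part will be the combinatorial matching in this last step. Already for $k=3$ the identity arising from $n=6, N=9$ expands into hundreds of terms, and isolating the higher-Zagier contributions requires careful bookkeeping, as witnessed by the explicit combination (\ref{FormulaZagierWeightSix}) in Theorem \ref{TheoremDepth6}. For general $k$ I would attempt to organize the bookkeeping using the cocyclic structure on $\mathcal{Q}_n(m)$ together with the Dold--Kan decomposition (\ref{FormulaDoldKanSum}), reducing the problem to a statement about the normalized cochains $CN(\mathcal{Q}_n)^m$ classified in Proposition \ref{PropositionCNQLi}; this should give a uniform framework in which the quadrangulation expansion of equation (\ref{FormulaEquationQuadrangular}) can be decomposed along the two-dimensional cocyclic ``skeleton'' with cross-ratios living on a distinguished pentagon and the $a_i$'s living on the remaining faces. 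Turning this heuristic into a proof uniformly in $k$, and in particular verifying that the residual terms are always Zagier-type, is the decisive step.
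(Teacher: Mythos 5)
The statement you are trying to prove is stated in the paper as a \emph{conjecture}: the authors establish it only for $k=2$ (\S\ref{SectionHigherGanglFour}) and $k=3$ (\S\ref{SectionHigherGanglSix}), and leave the general case open. Your setup is correct and matches theirs: surjectivity is immediate, and injectivity reduces, via the presentation of $\mathcal{B}_2(\F)$ and the map $\mathcal{I}$ of (\ref{FormulaInverseMap}), to showing that $\mathcal{I}$ kills every tensor in $\textup{CoLie}_k(\textup{P}_\F)$ containing a five-term factor. You also correctly identify the engine: Proposition \ref{PropositionMainFunctionalEquation} combined with the quadrangulation formula (\ref{FormulaQuadrangulationFormula}). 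But your proposal stops exactly where the real difficulty begins, and you say so yourself (``Turning this heuristic into a proof uniformly in $k$ \dots is the decisive step''). That step is not a bookkeeping exercise that the cocyclic/Dold--Kan structure can be expected to absorb: the Dold--Kan decomposition (\ref{FormulaDoldKanSum}) and Proposition \ref{PropositionCNQLi} classify the \emph{linear relations among quadrangular polylogarithms}, which is logically upstream of the problem here. The problem is to show that specific tensors of the form $Q\otimes\dots\otimes S\otimes\dots\otimes Q$ vanish in $\mathcal{G}_k$, and the paper's mechanism for this is quite different from what you describe.

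Concretely, the paper does not specialize variables of (\ref{FormulaEquationQuadrangular}) to encode $a_1,\dots,a_{k-1}$ through the iterated-integral presentation of $\Li^{\L}_{k;1,\dots,1}$. Instead it works entirely inside $\textup{CoLie}_k(\textup{P}_\F)$ with the recursively defined elements $Q(x_0,\dots,x_{2k+1})$ and $S(x_0,\dots,x_{2k+2})$ of \S\ref{SectionFunctionalHigherGangl}, proves the single clean identity $S(x_0,\dots,x_{2k+2})\equiv 0$ (this is where Proposition \ref{PropositionMainFunctionalEquation} and the quadrangulation formula enter, once and for all), and then \emph{degenerates} this identity by repeating points (e.g.\ $S(x_0,x_1,x_0,x_1,x_2,\dots)$, giving $D_1,\dots,D_9$ for $k=3$) and exploits (anti)symmetry of the resulting brackets $[[\,\cdot\,]]$ under permutations to force individual $Q\otimes S\otimes Q$ and $Q\otimes Q\otimes S$ terms to vanish. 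These degenerations are found case by case; nothing in the paper suggests a uniform-in-$k$ choice, and for $k=3$ the argument already requires a dozen carefully chosen identities. So your proposal is a reasonable research plan whose first half coincides with the paper's reduction, but it contains a genuine gap at the decisive step, and you should be aware that even the authors only close that gap for $k\leq 3$, by explicit and non-uniform combinatorics rather than by the structural argument you sketch.
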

For $k=2,$ Conjecture \ref{ConjectureHigherGangl} was proved by Gangl in \cite{Gan16}, see \S \ref{SectionHigherGanglFour} for another approach. In  \S \ref{SectionHigherGanglSix}, we prove Conjecture \ref{ConjectureHigherGangl} for $k=3$.

\subsection{Functional equation in $\mathcal{G}_k$} \label{SectionFunctionalHigherGangl}
In this section, we prove a functional equation in the space $\mathcal{G}_k$, which was defined in \S \ref{SectionDepthConjecture}. 

Let $\textup{P}_\F$ be the quotient of $\Q[\mathbb{P}^1_\F]$ by the subspace spanned by
$\{0\},\{1\},\{\infty\},$ and
\[
 \{x\}+\left\{x^{-1}\right\},\  \{x\}+\{1-x\} \quad \text{for} \quad x\in \F^{\times}.
\]

It is easy to see that we have a well-defined map 
\[
\mathcal{I}\colon \textup{CoLie}_k(\textup{P}_\F) \lra \mathcal{G}_k
\]
sending $\{a_1\} \otimes \dots \otimes \{a_k\}$ to $\Li_{k;1,\dots,1}^{\L}(a_1,\dots,a_k).$
For $x,y \in \textup{CoLie}_k(\textup{P}_\F)$, we write $x \equiv y$ if $x-y\in \textup{Ker}(\mathcal{I}).$

Next, we define  inductively certain elements 
\[
Q(x_0,\dots,x_{2k+1}), \ \  S(x_0,\dots,x_{2k+2})\in\textup{CoLie}_k(\textup{P}_\F).
\]
For $k=1$ we put
\begin{align*}
Q(x_0,\dots,x_3)&= \bigl\{[x_0,x_{1},x_{2},x_{3}]\bigr\}; \\
S(x_0,\dots,x_4)&=\sum_{i=0}^4  (-1)^i \bigl\{[x_0,\dots ,\hat{x}_i,\dots,x_4]\bigr\}.
\end{align*}
For $k\geq 2$ we have
\begin{align*}
    Q(x_0,&\dots,x_{2k+1})\\
    =&\sum_{i=0}^{2k-2} Q(x_0,\dots,x_{i},x_{i+3},\dots,x_{2k+1}) \otimes Q(x_i,x_{i+1},x_{i+2},x_{i+3}),\\
    S(x_0,&\dots,x_{2k+2})\\
    =&+\sum_{i=0}^{2k-1} S(x_0,\dots,x_{i},x_{i+3},\dots,x_{2k+2}) \otimes Q(x_i,x_{i+1},x_{i+2},x_{i+3})\\
    &+\sum_{i=0}^{2k-2} (-1)^i Q(x_0,\dots,x_{i},x_{i+4},\dots,x_{2k+2}) \otimes S(x_i,x_{i+1},x_{i+2},x_{i+3},x_{i+4}).
\end{align*}

It is easy to see that $Q(x_0,\dots,x_3)$ and $S(x_0,\dots,x_4)$ are both anti-symmetric in their arguments and vanish if any two arguments coincide.

\begin{proposition} The following statements hold:\nobreakpar
\begin{enumerate}
    \item $\mathcal{I}\bigl(Q(x_0,\dots,x_{2k+1})\bigr)= \textup{QLi}_{2k}(x_0,\dots,x_{2k+1})$ in $\mathcal{G}_k,$
    \item $S(x_0,\dots,x_{2k+2}\bigr)\equiv 0.$
\end{enumerate}
\end{proposition}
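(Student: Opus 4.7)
The plan is to prove statements (1) and (2) simultaneously by induction on $k$. For the base case $k=1$, claim (1) is immediate from the definitions: $\mathcal{I}\bigl(\{[x_0,x_1,x_2,x_3]\}\bigr)=\Li_2^{\L}([x_0,x_1,x_2,x_3])=\textup{QLi}_2(x_0,x_1,x_2,x_3)$. Claim (2) reduces to $\mathcal{I}\bigl(S(x_0,\dots,x_4)\bigr)=\sum_{i=0}^4(-1)^i\Li_2^{\L}\bigl([x_0,\dots,\widehat{x_i},\dots,x_4]\bigr)$, which vanishes in $\mathcal{L}_2(\F)=\mathcal{B}_2(\F)$ by the Abel five-term relation (Proposition \ref{PropositionWeightTwo}) and hence in the quotient $\mathcal{G}_1$.

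For the inductive step on (1), the key observation is that the recursion defining $Q(x_0,\dots,x_{2k+1})$ mirrors the portion of the coproduct formula (\ref{ClusterPolylogarithmCoproduct}) corresponding to splittings $j-i=3$, i.e.\ to quadrangulations of the $(2k+2)$-gon in which one quadrangle has three consecutive sides on the boundary. Applying the inductive hypothesis to each factor $Q(x_0,\dots,x_i,x_{i+3},\dots,x_{2k+1})$ of weight $k-1$, and unfolding $\mathcal{I}$ using the formula $\mathcal{I}(\{a_1\}\otimes\dots\otimes\{a_k\})=\Li_{k;1,\dots,1}^{\L}(a_1,\dots,a_k)$, each summand becomes a product-type expression matching a corresponding term of the quadrangulation formula (\ref{FormulaQuadrangulationFormula}) for $\textup{QLi}_{2k}(x_0,\dots,x_{2k+1})$. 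The plan is to then show that any quadrangulation which is not of this "staircase" type produces a contribution that coincides in $\mathcal{G}_k$ with some staircase contribution after applying the Zagier identifications $\{a\}+\{1-a\}\equiv 0$ and $\{a\}+\{a^{-1}\}\equiv 0$ that define $\textup{P}_\F$.

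For the inductive step on (2), the two sums in the recursive formula for $S(x_0,\dots,x_{2k+2})$ split into: (i) terms of the form $S(x_0,\dots,x_i,x_{i+3},\dots,x_{2k+2})\otimes Q(x_i,x_{i+1},x_{i+2},x_{i+3})$ where the $S$ factor has weight $k-1$ and vanishes by the inductive hypothesis, and (ii) terms of the form $Q(x_0,\dots,x_i,x_{i+4},\dots,x_{2k+2})\otimes S(x_i,\dots,x_{i+4})$ where $S$ has weight one and vanishes by the five-term relation applied via the base case. To conclude, one uses that if $x\in\textup{CoLie}_{k'}(\textup{P}_\F)$ satisfies $\mathcal{I}(x)\in\mathcal{D}_{k'-1}\L_{2k'}(\F)$ modulo Zagier, then $\mathcal{I}(x\otimes y)$ lies in $\mathcal{D}_{k'}\L_{2(k'+1)}(\F)$ modulo a higher Zagier relation, so the tensor factor's vanishing in $\mathcal{G}_{k'}$ propagates to vanishing of the whole in $\mathcal{G}_k$.

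The main obstacle is reconciling the purely formal tensor structure in $\textup{CoLie}_k(\textup{P}_\F)$ with the much less formal structure of $\mathcal{G}_k$ under $\mathcal{I}$. Specifically, the propagation lemma sketched above must be justified by showing that the quasi-shuffle identities among $\Li_{k;1,\dots,1}^{\L}$'s — which guaranteed that $\mathcal{I}$ descends to $\textup{CoLie}_k$ — interact correctly with depth filtration, and that the two families of Zagier relations in weight two lift to the corresponding families in every weight $2k$ after adjoining additional arguments. For claim (1), the further subtlety is controlling which quadrangulations of the $(2k+2)$-gon are ``seen'' by the recursion of $Q$, and verifying that the missing ones are identified with captured ones in $\mathcal{G}_k$ rather than contributing independent terms; this is the most delicate point of the argument.
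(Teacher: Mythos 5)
Your treatment of part (2) has a genuine gap, and it is precisely the point you flag as ``the main obstacle'': the propagation step. You argue that because the factor $S(x_0,\dots,x_{i},x_{i+3},\dots,x_{2k+2})$ vanishes in $\mathcal{G}_{k-1}$ by induction (and the weight-one $S$ vanishes by the five-term relation), the full tensors $S\otimes Q$ and $Q\otimes S$ vanish in $\mathcal{G}_k$. But $\equiv$ is defined via the kernel of $\mathcal{I}\colon\textup{CoLie}_k(\textup{P}_\F)\to\mathcal{G}_k$, and $\mathcal{I}$ sends a tensor to a single depth-$k$ multiple polylogarithm $\Li^{\L}_{k;1,\dots,1}(a_1,\dots,a_k)$, not to a product of lower-weight ones; vanishing of one factor under the lower-weight map $\mathcal{I}$ does not formally imply vanishing of the whole tensor under $\mathcal{I}$ in weight $2k$. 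Indeed, the assertion ``a five-term relation placed in one tensor slot kills the element'' is essentially Conjecture \ref{ConjectureDepthObstr} itself, which the paper states is open for $k\geq 3$ and proves for $k=3$ only via the lengthy degenerations $D_1,\dots,D_9$ of \S\ref{SectionHigherGanglSix}. If your propagation lemma were available, all of that work would be superfluous, so it cannot be taken for granted here.

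The paper's proof of (2) avoids $\mathcal{I}$ in the inductive step altogether. It first establishes the purely combinatorial identity
\[
S(x_0,\dots,x_{2k+2})=\sum_{j=0}^{2k+2}(-1)^{j}\,Q(x_0,\dots,\hat{x}_j,\dots,x_{2k+2})
\]
in $\textup{CoLie}_k(\textup{P}_\F)$ by induction on $k$, matching the terms of the two recursions term by term (no properties of polylogarithms are used). Part (2) then follows from part (1) combined with Proposition \ref{PropositionMainFunctionalEquation} for $n=2k$, $N=2k+2$: in $\mathcal{G}_k$ every term of that equation with fewer than $2k+2$ arguments has depth at most $k-1$ by the quadrangulation formula and so drops out, and $\textup{QLi}^{\textup{sym}}_{2k}$ agrees with $\textup{QLi}_{2k}$ modulo the same lower-depth corrections (Lemma \ref{LemmaSymmetrizedQuadrangularPolylogarithm}), leaving exactly $\sum_j(-1)^{j}\textup{QLi}_{2k}(x_0,\dots,\hat{x}_j,\dots,x_{2k+2})\equiv 0$. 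Your part (1) is in the spirit of the paper's justification via the quadrangulation formula, but the identifications you propose through the Zagier relations are not needed: working in $\mathcal{G}_k$, i.e.\ modulo depth $k-1$, already reduces each $\Li_{k}(\mathrm{t}_Q)$ to the single term $\Li^{\L}_{k;1,\dots,1}$ in the cross-ratios of the quadrilaterals of $Q$. I recommend restructuring part (2) around the displayed combinatorial identity rather than the factorwise vanishing argument.
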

\begin{proof}
The first statement follows from the quadrangulation formula. To prove the second statement, notice that Proposition \ref{PropositionMainFunctionalEquation} implies that in $\mathcal{G}_k$
\[
\sum_{j=0}^{2k+2} (-1)^j \textup{QLi}_{2k}(x_0,\dots, \hat{x}_j,\dots,x_{2k+2})= \sum_{j=0}^{2k+2} (-1)^j \textup{QLi}_{2k}^{\textup{sym}}(x_0,\dots, \hat{x}_j,\dots,x_{2k+2})= 0.
\]
To prove $(2)$,  it is sufficient to show that 
\[
S(x_0,\dots,x_{2k+2})=\sum_{j=0}^{2k+2} (-1)^jQ(x_0,\dots,\hat{x}_j,\dots,x_{2k+2});
\]
we prove it by induction. For $k=1$ this is trivial. For $k\geq 2$, we have
\begin{subequations}
\begin{align}
    S(x_0,& \dots,x_{i},x_{i+3},\dots,x_{2k+2}) \nonumber\\
    =&+\sum_{j=0}^{i-1}(-1)^jQ(x_0,\dots,\hat{x}_j,\dots,x_{i},x_{i+3},\dots,x_{2k+2})\label{FormulaFirst1}\\
    &+\sum_{j=i+4}^{2k+1}(-1)^jQ(x_0,\dots,x_{i},x_{i+3},\dots,\hat{x}_j,\dots,x_{2k+2})\label{FormulaFirst2}\\
    &+(-1)^iQ(x_0,\dots,x_{i-1},x_{i+3},\dots,x_{2k+2})\label{FormulaFirst3}\\
    &+(-1)^{i+3}Q(x_0,\dots,x_{i},x_{i+4},\dots,x_{2k+2})\label{FormulaFirst4}.
\end{align}
\end{subequations}

Next,
\begin{subequations}
\begin{align}
Q(x_0,& \dots,x_{i},x_{i+4},\dots,x_{2k+2}) \otimes S(x_i,x_{i+1},x_{i+2},x_{i+3},x_{i+4}) \nonumber\\
=&+Q(x_0,\dots,x_{i},x_{i+4},\dots,x_{2k+2}) \otimes Q(x_{i+1},x_{i+2},x_{i+3},x_{i+4})\label{FormulaSecond1}\\
&-Q(x_0,\dots,x_{i},x_{i+4},\dots,x_{2k+2}) \otimes Q(x_i,x_{i+2},x_{i+3},x_{i+4})\label{FormulaSecond2}\\
&+Q(x_0,\dots,x_{i},x_{i+4},\dots,x_{2k+2}) \otimes Q(x_i,x_{i+1},x_{i+3},x_{i+4})\label{FormulaSecond3}\\
&-Q(x_0,\dots,x_{i},x_{i+4},\dots,x_{2k+2}) \otimes Q(x_i,x_{i+1},x_{i+2},x_{i+4})\label{FormulaSecond4}\\
&+Q(x_0,\dots,x_{i},x_{i+4},\dots,x_{2k+2}) \otimes Q(x_i,x_{i+1},x_{i+2},x_{i+3}).\label{FormulaSecond5}
\end{align}
\end{subequations}
In the formula
\begin{align*}
    S(x_0,&\dots,x_{2k+2})\\
    =&+\sum_{i=0}^{2k-1} S(x_0,\dots,x_{i},x_{i+3},\dots,x_{2k+2}) \otimes Q(x_i,x_{i+1},x_{i+2},x_{i+3})\\
    &+\sum_{i=0}^{2k-2} (-1)^iQ(x_0,\dots,x_{i},x_{i+4},\dots,x_{2k+2}) \otimes S(x_i,x_{i+1},x_{i+2},x_{i+3},x_{i+4})
\end{align*}
the terms coming from (\ref{FormulaFirst3}) and  (\ref{FormulaSecond5}) cancel each other; similarly for (\ref{FormulaFirst4}) and  (\ref{FormulaSecond1}).
It is easy to see that  every remaining term has exactly one index missing. We collect all terms not containing some index $j$ and get
\begin{subequations}
\begin{align}
&+\sum_{i>j}(-1)^jQ(x_0,\dots,\hat{x}_j,\dots,x_{i},x_{i+3},\dots,x_{2k+2})\otimes Q(x_i,x_{i+1},x_{i+2},x_{i+3})\label{FormulaThird1}\\
&+\sum_{i<j-3}(-1)^jQ(x_0,\dots,x_{i},x_{i+3},\dots,\hat{x}_j,\dots x_{2k+2}) \otimes Q(x_i,x_{i+1},x_{i+2},x_{i+3})\label{FormulaThird2}\\
&+(-1)^jQ(x_0,\dots,x_{j-1},x_{j+3},\dots,x_{2k+2}) \otimes Q(x_{j-1},x_{j+1},x_{j+2},x_{j+3})\label{FormulaThird3}\\
&+(-1)^j Q(x_0,\dots,x_{j-2},x_{j+2},\dots,x_{2k+2}) \otimes Q(x_{j-2},x_{j-1},x_{j+1},x_{j+2})\label{FormulaThird4}\\
&+(-1)^j Q(x_0,\dots,x_{j-3},x_{j+1},\dots,x_{2k+2}) \otimes Q(x_{j-3},x_{j-2},x_{j-1},x_{j+1})\label{FormulaThird5}.
\end{align}
\end{subequations}
In the formula above, terms (\ref{FormulaThird1}) come from (\ref{FormulaFirst1}), terms (\ref{FormulaThird2}) come from (\ref{FormulaFirst2}),  term (\ref{FormulaThird3}) comes from (\ref{FormulaSecond2}),  term (\ref{FormulaThird4}) comes from (\ref{FormulaSecond3}), and term (\ref{FormulaThird5}) comes from (\ref{FormulaSecond4}).

The sum of the terms (\ref{FormulaThird1})--(\ref{FormulaThird5}) equals to 
\[
(-1)^jQ(x_0,\dots,\hat{x}_j,\dots,x_{2k+1}),
\]
from where the statement follows.
\end{proof}

\subsection{Gangl formula in weight four}  \label{SectionHigherGanglFour}
As a first application, we prove Conjecture \ref{ConjectureHigherGangl} in weight four, known as the Gangl formula (\cite[Theorem 17]{Gan16}). Our proof simplifies that from \cite[\S 6]{GR18}. 

Zagier proved that 
$\Li_{2;1,1}^{\L}(a_1,a_2)+\Li_{2;1,1}^{\L}(1-a_1,a_2)$ and 
$\Li_{2;1,1}^{\L}(a_1,a_2)+\Li_{2;1,1}^{\L}\left(\frac{1}{a_1},a_2\right)$
have depth one. We suggest a geometric interpretation of his formulas. Consider six points $x_0,\dots,x_5\in \mathbb{P}^1_\F$ such that there exists a projective involution $\psi$ such that $\psi(x_0)=x_3,$  $\psi(x_1)=x_4$ and $\psi(x_2)=x_5.$
Then the following formula holds:
\begin{align*}
2\textup{QLi}_4^(x_0&,x_5,x_0,x_4,x_2,x_1)-2\textup{QLi}_4(x_5,x_4,x_0,x_2,x_3,x_4)\\
=&+\textup{QLi}_4(x_0,x_1,x_3,x_4) -\textup{QLi}_4(x_0,x_2,x_3,x_5)+\textup{QLi}_4(x_1,x_2,x_4,x_5)\\
&+2\textup{QLi}_4(x_0,x_2,x_1,x_5)+2\textup{QLi}_4(x_0,x_4,x_3,x_5)\\
&-2\textup{QLi}_4(x_1,x_2,x_4,x_3)-2\textup{QLi}_4(x_1,x_3,x_2,x_5)\\
&-2\textup{QLi}_4(x_2,x_3,x_4,x_5).
\end{align*}
Its proof is based on the following version of the Kummer equation: 
\[
\begin{split}
 \Li_{3}^{\L}&\left([x_0,x_1,x_3,x_4]\right)+ \Li_{3}^{\L}\left([x_0,x_2,x_3,x_5]\right)+ \Li_{3}^{\L}\left([x_1,x_2,x_4,x_5]\right)+2\Li_{3}^{\L}(1)\\
=&+2\Li_3^{\L}\left([x_0,x_1,x_3,x_5]\right)+2\Li_{3}^{\L}\left([x_0,x_2,x_4,x_5]\right)+ 2\Li_{3}^{\L}\left([x_1,x_2,x_4,x_3]\right)\\
&+2\Li_{3}^{\L}\left([x_0,x_1,x_2,x_4]\right)+2\Li_{3}^{\L}\left([x_0,x_1,x_3,x_2]\right)+2\Li_{3}^{\L}\left([x_2,x_0,x_5,x_1]\right).
\end{split}
\]
The first Zagier formula follows by expressing $\textup{QLi}_4$ via $\Li_{3,1}^{\L}$ using the  quadrangulation formula. The second Zagier formula can be easily derived from the first one.

\begin{proposition}[Gangl formula]
 For $a\in \F^{\times}$ and $x_0,\dots,x_4\in \mathbb{P}^1_\F$  the sum
\be \label{FormulaGanglWeightfour}
	\sum_{i=0}^4(-1)^i\textup{Li}^{\L}_{2;1,1}(a,[x_0,\dots,\widehat{x_i},\dots,x_4])
\ee
can be expressed via  classical polylogarithms.
\end{proposition}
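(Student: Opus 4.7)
The plan is to work in the Lie coalgebra $\mathcal{G}_2=\mathrm{gr}^{\mathcal{D}}_2\L_4(\F)$ (modulo the Zagier elements, which are already proved to be classical polylogarithms in this subsection), and to reduce the Gangl formula to the $7$-point functional equation $S(y_0,\dots,y_6)\equiv 0$ in $\mathcal{G}_2$ established in \S \ref{SectionFunctionalHigherGangl}. Using the map $\mathcal{I}\colon\textup{CoLie}_2(\textup{P}_\F)\to\mathcal{G}_2$ of (\ref{FormulaInverseMap}), which sends $\{a\}\otimes\{b\}$ to $\Li_{2;1,1}^{\L}(a,b)$, the Gangl sum in $\mathcal{G}_2$ rewrites as
\[
\mathcal{I}\bigl(\{a\}\otimes S(x_0,\dots,x_4)\bigr),\qquad S(x_0,\dots,x_4)=\sum_{i=0}^{4}(-1)^i\bigl\{[x_0,\dots,\widehat{x}_i,\dots,x_4]\bigr\}.
\]
So the formula to be proved is equivalent to $\{a\}\otimes S(x_0,\dots,x_4)\equiv 0$ in $\textup{CoLie}_2(\textup{P}_\F)$.

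Next I would specialize $S(y_0,\dots,y_6)\equiv 0$ to a configuration engineered to produce exactly this element. Expanding the right-hand side by the inductive definition gives a sum of $Q\otimes Q$ terms and of $Q\otimes S$ terms in which each $S$-factor is a five-term relation on five of the $y_i$'s. A natural choice is to set $y_i=x_i$ for $0\le i\le 4$ and to pick $y_5,y_6$ so that $[y_0,y_4,y_5,y_6]=a$; then the $i=0$ summand of the second sum in the inductive definition of $S(y_0,\dots,y_6)$ equals precisely $\pm\{a\}\otimes S(x_0,\dots,x_4)$, while the remaining $Q\otimes S$ summands carry $S$-factors that involve $y_5$ or $y_6$ and, after a further degeneration of the remaining free parameter in the choice of $y_5,y_6$ so that one of them coincides with some $x_j$, collapse to zero by the antisymmetry property of $S$.

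The main obstacle is handling the $Q\otimes Q$ terms: under $\mathcal{I}$ they become sums of $\Li_{2;1,1}^{\L}$ of two cross-ratios of six points, and these must be shown to lie in the classical Bloch group. The expected mechanism is to combine them via the main functional equation (\ref{FormulaMainFunctionalEquation}) for $\textup{QLi}_4^{\textup{sym}}$ with $N=5$ and the quadrangulation formula (\ref{FormulaQuadrangulationFormula}), so that every $Q\otimes Q$ contribution reduces to a combination of classical $\Li_4^{\L}$'s, hence vanishes in $\mathcal{G}_2$. A cleaner geometric alternative, in the spirit of the $6$-point projective-involution argument used in this same subsection to derive Zagier's formulas, is to arrange the seven points $y_0,\dots,y_6$ on a configuration admitting a projective involution, so that the extraneous $Q\otimes Q$ summands cancel pairwise before one applies $\mathcal{I}$. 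Either way, the essential input is the $S(y_0,\dots,y_6)\equiv 0$ relation of \S \ref{SectionFunctionalHigherGangl}; the remaining work is the combinatorial bookkeeping of the many terms produced by the inductive definition of $S$, and selecting the specialization that makes only the Gangl piece survive.
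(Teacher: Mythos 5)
Your overall strategy---rewrite the Gangl sum as $\mathcal{I}\bigl(\{a\}\otimes S(x_0,\dots,x_4)\bigr)$ and extract it from a degeneration of the seven-point relation $S(y_0,\dots,y_6)\equiv 0$ of \S\ref{SectionFunctionalHigherGangl}---is exactly the paper's starting point, but the execution has a genuine gap. For $k=2$ the expansion of $S(y_0,\dots,y_6)$ consists of four terms of the form $S(\text{5 pts})\otimes Q(\text{4 pts})$ and three of the form $Q(\text{4 pts})\otimes S(\text{5 pts})$; every one of these is a five-term combination tensored with a single dilogarithm argument, i.e.\ each is itself a Gangl-type element. Your proposed specialization (keep the $i=0$ summand $Q(y_0,y_4,y_5,y_6)\otimes S(x_0,\dots,x_4)$, kill the rest by letting $y_5$ or $y_6$ collide with some $x_j$) cannot work as stated: whatever collision you choose, several of the $S(\text{5 pts})\otimes Q$ terms from the first sum survive, and they are of the same unknown type as the target. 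Your two proposed mechanisms for disposing of the leftovers do not repair this: reducing ``$Q\otimes Q$ contributions'' to classical $\Li_4$'s via the $N=5$ equation and the quadrangulation formula is circular (a generic $\Li_{2;1,1}(b,c)$ is not classical---showing that these particular five-term combinations are classical \emph{is} the statement being proved), and the projective-involution cancellation is asserted without any argument.

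What the paper does instead, and what is missing from your proposal, is twofold. First, it chooses the single collision $y_6=y_4$, which is special: it annihilates five of the seven terms simultaneously (each acquires either a $Q$ or an $S$ factor with a repeated argument) and leaves the clean two-term identity
\[
S(x_0,x_1,x_2,x_5,x_4)\otimes Q(x_2,x_3,x_4,x_5)\equiv Q(x_0,x_1,x_5,x_4)\otimes S(x_1,x_2,x_3,x_4,x_5).
\]
Second---and this is the key idea absent from your plan---it exploits this identity to show that the element $S(x_0,x_1,x_2,x_5,x_4)\otimes Q(x_2,x_3,x_4,x_5)$ is invariant under the transpositions $(24)$, $(25)$, $(15)$ while changing sign under $(01)$ and $(23)$; these incompatible symmetries force it to vanish. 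Since five of the six points together with the value of the cross-ratio $[x_2,x_3,x_4,x_5]$ can be prescribed freely, this vanishing is exactly $\{a\}\otimes S(\text{5 pts})\equiv 0$, i.e.\ (\ref{FormulaGanglWeightfour}). Without a replacement for this symmetry argument, your proposal does not close.
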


\begin{proof}
We need to show that
$
\sum_{i=0}^4(-1)^i\textup{Li}_{2;1,1}^{\L}(a,[x_0,\dots,\widehat{x_i},\dots,x_4])\equiv 0.
$

We know that
 $S(x_0,\dots,x_{6})\equiv 0.$
More explicitly, we have 
\begin{align*}
& +S(x_0,x_3,x_4,x_5,x_6)\otimes Q(x_0,x_1,x_2,x_3)+S(x_0,x_1,x_4,x_5,x_6)\otimes Q(x_1,x_2,x_3,x_4)\\
 &+S(x_0,x_1,x_2,x_5,x_6)\otimes Q(x_2,x_3,x_4,x_5)+S(x_0,x_1,x_2,x_3,x_6)\otimes Q(x_3,x_4,x_5,x_6)\\
 &+S(x_0,x_4,x_5,x_6)\otimes S(x_0,x_1,x_2,x_3,x_4)-Q(x_0,x_1,x_5,x_6)\otimes S(x_1,x_2,x_3,x_4,x_5)\\
 &+Q(x_0,x_1,x_2,x_6)\otimes S(x_2,x_3,x_4,x_5,x_6)\equiv 0.
\end{align*}
Specializing to the divisor $x_6=x_4$ we get
\[
S(x_0,x_1,x_2,x_5,x_4)\otimes Q(x_2,x_3,x_4,x_5)\equiv Q(x_0,x_1,x_5,x_4)\otimes S(x_1,x_2,x_3,x_4,x_5).
\]
Element 
$S(x_0,x_1,x_2,x_5,x_4)\otimes Q(x_2,x_3,x_4,x_5)$
is invariant under transpositions $(24), (25)$ and $(15)$  but changes sign 
under transpositions $(01)$ and $(23).$ Thus
\[
S(x_0,x_1,x_2,x_5,x_4)\otimes Q(x_2,x_3,x_4,x_5)\equiv 0,
\]
which implies (\ref{FormulaGanglWeightfour}).
\end{proof}

\subsection{Higher Gangl formula in weight six} \label{SectionHigherGanglSix}
In this section, we prove Conjecture \ref{ConjectureHigherGangl} for $k=3.$ To show that Theorem \ref{TheoremDepth6} follows from it, we need to express 
\[
\Li_{3;1,1,1}^{\L}(a_1,a_2,a_3)+\Li_{3;1,1,1}^{\L}\left(\frac{1}{a_1},a_2,a_3\right).
\]
via functions (\ref{FormulaZagierWeightSix}) and polylogarithms of depth two. 
This was done by  Charlton, Gangl, and Radchenko (informal communication).
It remains to show that for any elements $Q_1, Q_2$ and $S$ we have  $Q_1\otimes Q_2 \otimes S\equiv 0.$  This would imply that $\mathcal{I}$ is well-defined, so $\Delta^{[2]}$ is bijective.

For this, we consider the following four degenerations of the equation  $S(x_0,\dots,x_{8})\equiv 0$. For clarity, we write $i$ instead of $x_i$ for arguments of $Q$ and~$S.$
\begin{align*}
0\equiv& D_1(x_0,\dots,x_6)\equiv -S(x_0,x_1,x_0,x_1,x_2,x_3,x_4,x_5,x_6)\\
\equiv &+Q(0,1,2,3)\otimes Q(0,1,3,4)\otimes S(0,1,4,5,6)\\
&-Q(0,1,2,3)\otimes S(0,1,3,4,5)\otimes Q(0,1,5,6)\\
&+Q(0,1,5,6)\otimes Q(0,1,4,5)\otimes S(0,1,2,3,4).\\
0\equiv & D_2(x_0,\dots,x_6)\equiv -S(x_0,x_1,x_0,x_2,x_0,x_3,x_4,x_5,x_6)\\
\equiv&+Q(0,1,5,6)\otimes S(0,1,2,4,5)\otimes Q(0,2,3,4) \\
&+Q(0,1,5,6)\otimes Q(0,1,2,5)\otimes S(0,2,3,4,5)\\
&-Q(0,2,3,4)\otimes Q(0,1,2,4)\otimes S(0,1,4,5,6).\\
\end{align*}
\begin{align*}
0\equiv& D_3(x_0,\dots,x_6)\equiv -S(x_0,x_1,x_0,x_2,x_3,x_0,x_4,x_5,x_6)\\
  \equiv&+Q(1,0,3,2)\otimes Q(1,0,4,3)\otimes S(1,0,4,6,5)-Q(0,4,2,3)\otimes Q(0,5,1,6)\otimes S(0,5,1,4,2)\\
 &-Q(0,5,1,6)\otimes Q(0,4,2,3)\otimes S(0,4,2,5,1)+Q(1,0,3,2)\otimes Q(1,0,5,6)\otimes S(1,0,5,3,4)\\
 &-Q(1,0,5,6)\otimes Q(1,0,3,2)\otimes S(1,0,3,5,4)-Q(3,0,1,2)\otimes Q(3,0,5,4)\otimes S(3,0,5,1,6)\\
 &+Q(3,0,1,2)\otimes Q(3,0,6,1)\otimes S(3,0,6,5,4)+Q(3,0,5,4)\otimes Q(3,0,1,2)\otimes S(3,0,1,5,6)\\
 &-Q(4,0,2,3)\otimes Q(4,0,1,2)\otimes S(4,0,1,6,5)+Q(5,0,1,6)\otimes Q(5,0,2,1)\otimes S(5,0,2,4,3)\\
 &+Q(5,0,1,6)\otimes Q(5,0,3,4)\otimes S(5,0,3,1,2)-Q(5,0,3,4)\otimes Q(5,0,1,6)\otimes S(5,0,1,3,2).
\end{align*}
\begin{align*}
0\equiv& D_4(x_0,\dots,x_6) \equiv -S(x_0,x_1,x_2,x_0,x_3,x_4,x_0,x_5,x_6)\\
 \equiv &+Q(0,3,1,2)\otimes Q(0,6,4,5)\otimes S(0,6,4,3,1)+Q(0,4,2,3)\otimes Q(0,5,1,6)\otimes S(0,5,1,4,2)\\
 &+Q(0,5,1,6)\otimes Q(0,4,2,3)\otimes S(0,4,2,5,1)+Q(0,5,3,4)\otimes Q(0,6,2,1)\otimes S(0,6,2,5,3)\\
 &+Q(0,6,2,1)\otimes Q(0,5,3,4)\otimes S(0,5,3,6,2)+Q(0,6,4,5)\otimes Q(0,3,1,2)\otimes S(0,3,1,6,4)\\
 &+Q(1,0,3,2)\otimes Q(1,0,5,6)\otimes S(1,0,5,3,4)-Q(1,0,5,6)\otimes Q(1,0,3,2)\otimes S(1,0,3,5,4)\\
 &+Q(1,0,5,6)\otimes Q(1,0,4,5)\otimes S(1,0,4,3,2)+Q(2,0,4,3)\otimes Q(2,0,5,4)\otimes S(2,0,5,6,1)\\
 &-Q(2,0,4,3)\otimes Q(2,0,6,1)\otimes S(2,0,6,4,5)+Q(2,0,6,1)\otimes Q(2,0,4,3)\otimes S(2,0,4,6,5)\\
 &-Q(3,0,1,2)\otimes Q(3,0,5,4)\otimes S(3,0,5,1,6)+Q(3,0,1,2)\otimes Q(3,0,6,1)\otimes S(3,0,6,5,4)\\
 &+Q(3,0,5,4)\otimes Q(3,0,1,2)\otimes S(3,0,1,5,6)+Q(4,0,2,3)\otimes Q(4,0,6,5)\otimes S(4,0,6,2,1)\\
 &-Q(4,0,6,5)\otimes Q(4,0,1,6)\otimes S(4,0,1,3,2)-Q(4,0,6,5)\otimes Q(4,0,2,3)\otimes S(4,0,2,6,1)\\
 &+Q(5,0,1,6)\otimes Q(5,0,3,4)\otimes S(5,0,3,1,2)-Q(5,0,3,4)\otimes Q(5,0,1,6)\otimes S(5,0,1,3,2)\\
 &-Q(5,0,3,4)\otimes Q(5,0,2,3)\otimes S(5,0,2,6,1)-Q(6,0,2,1)\otimes Q(6,0,3,2)\otimes S(6,0,3,5,4)\\
 &-Q(6,0,2,1)\otimes Q(6,0,4,5)\otimes S(6,0,4,2,3)+Q(6,0,4,5)\otimes Q(6,0,2,1)\otimes S(6,0,2,4,3).
\end{align*}

The key idea is to find equations where all terms involved are obtained from
\[
[[0,1,2,3,4,5,6]]:= Q(0,1,5,4)\otimes Q(0,1,5,6)\otimes S(0,1,6,2,3)
\]
by permutations of the points. The following relations hold:
\begin{align*}
0\equiv& D_5(x_0,\dots,x_6)\equiv D_2+D_3-D_1+(13)(46)D_1\\
\equiv
&+Q(0,1,5,6)\otimes S(0,1,2,3,5)\otimes Q(0,3,4,5)\\
&-Q(0,1,5,6)\otimes Q(0,1,4,5)\otimes S(0,1,2,3,4)\\
&-Q(0,3,4,5)\otimes Q(0,3,5,6)\otimes S(0,1,2,3,6).
\end{align*}
\begin{align*}
0\equiv& D_6\equiv D_2-(15)D_2\\
\equiv &+Q(0,5,1,6)\otimes Q(0,5,1,2)\otimes S(0,5,2,3,4)-Q(0,1,5,6)\otimes Q(0,1,5,2)\otimes S(0,1,2,3,4)\\
&-Q(0,4,2,3)\otimes Q(0,4,2,1)\otimes S(0,4,1,5,6)+Q(0,4,2,3)\otimes Q(0,4,2,5)\otimes S(0,4,5,1,6)\\
\equiv &-[[0,1,3,4,6,5,2]]+[[0,4,1,6,3,2,5]]-[[0,4,5,6,3,2,1]]+[[0,5,3,4,6,1,2]].
\end{align*}
\begin{align*}
0\equiv& D_7\equiv D_5+(05)D_5\\
\equiv &+Q(0,1,5,6)\otimes Q(0,1,5,4)\otimes S(0,1,4,2,3)+Q(0,3,5,4)\otimes Q(0,3,5,6)\otimes S(0,3,6,1,2)\\  
&-Q(1,5,0,6)\otimes Q(1,5,0,4)\otimes S(1,5,4,2,3)-Q(3,5,0,4)\otimes Q(3,5,0,6)\otimes S(3,5,6,1,2)\\
\equiv &[[0,1,2,3,6,5,4]]+[[0,3,1,2,4,5,6]]-[[1,5,2,3,6,0,4]]-[[3,5,1,2,4,0,6]].
\end{align*}
\begin{align*}
0\equiv& D_{8}\equiv +D_4-(15)(24)D_2-(26)(35)D_2-(13)(46)D_2-D_1\\
&\quad \quad \  -D_5+(13)(46)D_1+(123456)D_5-(12)(36)(45)D_1-(12)(36)(45)D_5\\
\equiv &+Q(0,1,3,2)\otimes Q(0,1,3,4)\otimes S(0,1,4,5,6)+Q(0,3,1,2)\otimes Q(0,3,1,6)\otimes S(0,3,6,4,5)\\
&+Q(0,4,6,5)\otimes Q(0,4,6,1)\otimes S(0,4,1,2,3)+Q(0,6,4,5)\otimes Q(0,6,4,3)\otimes S(0,6,3,1,2)\\
\equiv &[[0,1,5,6,2,3,4]]+[[0,3,4,5,2,1,6]]+[[0,4,2,3,5,6,1]]+[[0,6,1,2,5,4,3]].
\end{align*}

\begin{lemma} The following formula holds:
\[
[[\sigma(0),\sigma(1),\sigma(2),\sigma(3),\sigma(4),\sigma(5),\sigma(6)]]\equiv (-1)^{\sigma}[[0,1,2,3,4,5,6]].
\]
\end{lemma}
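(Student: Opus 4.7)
The plan is to establish the antisymmetry by combining the manifest symmetries inherited from $Q$ and $S$ with the three 4-term relations $D_6, D_7, D_8$ derived above. First, since $Q$ is antisymmetric in its four arguments and $S$ in its five, swapping positions $0$ and $1$ in $[[a_0,\ldots,a_6]]= Q(a_0,a_1,a_5,a_4)\otimes Q(a_0,a_1,a_5,a_6)\otimes S(a_0,a_1,a_6,a_2,a_3)$ introduces a sign from each of the three factors, giving $(-1)^3=-1$; swapping positions $2$ and $3$ introduces a single sign from the $S$-factor. Hence $[[\cdot]]$ is already antisymmetric under the Klein four-group $K=\langle (01),(23)\rangle\subset S_7$, so it suffices to extend this antisymmetry to four additional transpositions that, together with $(01)$ and $(23)$, form a generating set of $S_7$.

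Next I would use the three relations $D_6, D_7, D_8$ to derive these additional antisymmetries. Each $D_i$ is a sum of four permuted copies of $[[\cdot]]$, and by applying $K$-translates together with the internal antisymmetries of the individual $Q$- and $S$-factors, one can rewrite each such four-term identity so that three of the four terms share a common ``base'' permutation, leaving a two-term relation $[[\sigma]]+\varepsilon[[\tau]]\equiv 0$ in which $\sigma^{-1}\tau$ realizes one of the targeted adjacent transpositions $(12),(34),(45),(56)$. Since those four together with the manifest $(01),(23)$ exhaust the six adjacent transpositions of $S_7$, and since these generate $S_7$, antisymmetry under each of them yields the full claim $[[\sigma(0),\ldots,\sigma(6)]]\equiv (-1)^\sigma[[0,\ldots,6]]$ for all $\sigma\in S_7$.

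The main obstacle is the combinatorial bookkeeping required to isolate a clean two-term antisymmetry from each four-term $D_i$: for each targeted transposition one must select precisely which $K$-translates (or, if necessary, which auxiliary combinations of $D_6, D_7, D_8$ together with $S_7$-translates thereof) must be added to cancel two of the four terms, with signs carefully tracked through the antisymmetries of $Q$ and $S$ applied inside each factor. Once this is carried out for one transposition in each of the four remaining generator classes, the conclusion follows formally from the presentation of $S_7$ by adjacent transpositions subject to the braid relations, since the sign character is determined by its value on these generators.
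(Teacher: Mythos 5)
Your opening move is correct and matches the paper's: since the cross-ratio class $Q$ and the five-term element $S$ are antisymmetric in $\textup{P}_\F$, the bracket $[[a_0,\dots,a_6]]=Q(a_0,a_1,a_5,a_4)\otimes Q(a_0,a_1,a_5,a_6)\otimes S(a_0,a_1,a_6,a_2,a_3)$ visibly changes sign under the slot transpositions $(01)$ (all three factors contribute a sign) and $(23)$ (only the $S$-factor does), and full antisymmetry would follow once sign-reversal is established for enough further transpositions to generate $S_7$. The gap is precisely in the step you defer to ``combinatorial bookkeeping.'' The mechanism you propose --- rewriting a single four-term relation $D_i$ so that three of its terms share a common base permutation modulo $K=\langle(01),(23)\rangle$ and the internal antisymmetries of the factors --- cannot work. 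Modulo those symmetries, a bracket is determined exactly by the unordered pairs sitting in slots $\{0,1\}$ and $\{2,3\}$ together with the individual entries in slots $4,5,6$, and a direct check shows that the four terms of each of $D_6$, $D_7$, $D_8$ lie in four \emph{distinct} equivalence classes for this data. Nothing collapses inside a single relation, so no two-term identity can be extracted from any one $D_i$ alone.

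What the paper actually does, and what your sketch is missing, is a two-stage cancellation whose existence is the real content of the proof. One first forms $D_i+\tau D_i$ for a label transposition $\tau$ chosen so that two terms of $\tau D_i$ cancel two terms of $D_i$ via the manifest $K$-antisymmetry (the paper uses $D_6+(04)D_6$, $D_7+(12)D_7$, $D_8+(04)D_8$, $D_6+(05)D_6$); each such sum is again a four-term relation. One then adds two of these, after applying a further permutation to one of them --- e.g.\ $(01)(24)$ applied to $D_6+(04)D_6$, added to $D_7+(12)D_7$ --- so that four of the eight terms cancel in pairs and the remaining four combine into a doubled two-term relation. This route establishes antisymmetry first on slots $\{0,1,2,3\}$, then on $\{0,1,2,3,5,6\}$, and finally invokes $D_7$ to bring in slot $4$; it does not proceed through your four adjacent transpositions $(12),(34),(45),(56)$, and there is no a priori reason each of those is individually realizable as a clean two-term consequence. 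Your reduction to a generating set of transpositions is sound, but without exhibiting the specific combinations of translates that produce the two-term relations, the argument is a plan rather than a proof.
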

\begin{proof}
Since $D_6+(0,4)D_6\equiv 0$ we have
\be \label{FormulaProofSix1}
\begin{split}
0 \equiv &-[[0,1,3,4,6,5,2]]+[[0,5,3,4,6,1,2]]\\
&-[[4,1,3,0,6,5,2]]+[[4,5,3,0,6,1,2]].   
\end{split}
\ee
Applying to (\ref{FormulaProofSix1}) permutation $(01)(24)$ we get
\be \label{FormulaProofSix2}
\begin{split}
0\equiv&-[[1,0,3,2,6,5,4]]+[[1,5,3,2,6,0,4]]\\
&-[[2,0,3,1,6,5,4]]+[[2,5,3,1,6,0,4]].
\end{split}
\ee
Similarly, looking at  $D_7+(1,2)D_7\equiv 0$ we have
\be \label{FormulaProofSix3}
\begin{split}
0\equiv &+[[0,1,2,3,6,5,4]]-[[1,5,2,3,6,0,4]]\\
&+[[0,2,1,3,6,5,4]]-[[2,5,1,3,6,0,4]].
\end{split}
\ee
Next, adding (\ref{FormulaProofSix2}) and (\ref{FormulaProofSix3}) we get that 
\[
[[1,5,3,2,6,0,4]]\equiv -[[2,5,3,1,6,0,4]].
\]
Thus $[[0,1,2,3,4,5,6]]$ is anti-symmetric in $\{0,1,2,3\}.$

Now we look at $D_8+(04)D_8\equiv 0$:
\[
\begin{split}
0\equiv &-[[0,1,6,5,2,3,4]]+[[0,6,1,2,5,4,3]]\\
&-[[4,1,6,5,2,3,0]]+[[4,6,1,2,5,0,3]]
\end{split}
\]
or, after applying (23)(456),
\be \label{FormulaProofSix4}
\begin{split}
0\equiv &-[[0,1,4,6,3,2,5]]-[[0,1,4,3,6,5,2]]\\
&-[[5,1,4,6,3,2,0]]-[[5,1,4,3,6,0,2]]=0.
\end{split}
\ee
On the other hand, the following equality follows from $D_6+(05)D_6\equiv 0$:
\be \label{FormulaProofSix5}
\begin{split}
0\equiv&+[[0,1,4,3,6,5,2]]-[[0,1,4,6,3,2,5]]\\
&+[[5,1,4,3,6,0,2]]-[[5,1,4,6,3,2,0]].
\end{split}
\ee
Adding (\ref{FormulaProofSix4}) and (\ref{FormulaProofSix5}), 
we get 
\[
[[0,1,4,6,3,2,5]]+[[5,1,4,6,3,2,0]]\equiv 0
\]
and 
\[
[[0,1,4,3,6,5,2]]+[[5,1,4,3,6,0,2]]\equiv 0.
\]
Thus $[[0,1,2,3,4,5,6]]$ is anti-symmetric in $\{0,1,2,3,5,6\}.$
The statement of the lemma easily follows from $D_7\equiv 0.$
\end{proof}

Now we finish the proof.
From $D_1$, we have 
\begin{subequations}
\begin{align}
Q(0&,1,2,3)\otimes S(0,1,3,4,5)\otimes Q(0,1,5,6)\label{FGanglSix1}\\
\equiv&-Q(0,1,3,2)\otimes Q(0,1,3,4)\otimes S(0,1,4,5,6)\label{FGanglSix2}\\
&-Q(0,1,5,6)\otimes Q(0,1,5,4)\otimes S(0,1,4,2,3)\label{FGanglSix3}.
\end{align}
\end{subequations}
The permutation exchanging $(\ref{FGanglSix2})$ and $(\ref{FGanglSix3})$ is odd, so (\ref{FGanglSix1}) vanishes. The vanishing of (\ref{FGanglSix1}) implies that every term of the type $Q\otimes S \otimes Q$ vanishes.

Finally, consider degeneration
\begin{subequations}
\begin{align}
D_9(x_0,\dots,x_6)\equiv&S(x_0,x_1,x_0,x_2,x_3,x_2,x_4,x_5,x_6)\nonumber\\
 &+[2,0,3,1]\otimes[2,5,3,1,4]\otimes[2,5,0,6]\label{FormulaDegenerationNine1} \\ 
 &+[3,1,2,0]\otimes[3,5,2,0,6]\otimes[3,5,1,4]\label{FormulaDegenerationNine2} \\ 
 &-[2,0,3,1]\otimes[2,4,3,1]\otimes[2,4,0,6,5]\nonumber\\
 &+[3,1,2,0]\otimes[3,6,2,0]\otimes[3,6,1,5,4] \nonumber\\
 &-[5,2,0,6]\otimes[5,3,1,2,0]\otimes[5,3,1,4]\label{FormulaDegenerationNine5}.  
\end{align}
\end{subequations}
Terms (\ref{FormulaDegenerationNine1}), (\ref{FormulaDegenerationNine2}), and (\ref{FormulaDegenerationNine5}) are of the type $Q\otimes S \otimes Q,$ so vanish. We get that
\be
[2,0,3,1]\otimes[2,4,3,1]\otimes[2,4,0,6,5]\equiv [3,1,2,0]\otimes[3,6,2,0]\otimes[3,6,1,5,4].
\ee
It follows that the term $[3,1,4,2]\otimes[3,5,4,2]\otimes[3,5,1,7,6]$ changes sign under transpositions $(45).$ On the other hand, it is invariant under $(04),$ because in the Lie coalgebra, we have
\begin{align*}
[2,4,3,1]&\otimes[2,0,3,1]\otimes[2,4,0,6,5]\\
\equiv&-[2,0,3,1]\otimes[2,4,3,1]\otimes[2,4,0,6,5]-[2,0,3,1]\otimes[2,4,0,6,5]\otimes[2,4,3,1]\\
\equiv&-[2,0,3,1]\otimes[2,4,3,1]\otimes[2,4,0,6,5]\equiv[2,0,3,1]\otimes[2,4,3,1]\otimes[2,0,4,6,5].
\end{align*}
Since $(04)(45)(04)=(45)(04)(45),$ this implies that 
\[
[2,4,3,1]\otimes[2,0,3,1]\otimes[2,4,0,6,5]\equiv 0,
\]
so any term of the type $Q\otimes Q \otimes S$ vanishes. This finishes the proof of  Conjecture \ref{ConjectureHigherGangl} for $k=3$.

\bibliographystyle{alpha}      
\bibliography{Cluster_Polylogarithms_Bibliography}

\end{document}